\newcommand{\sideremark}[1]{\ifvmode\leavevmode\fi\vadjust{\vbox to0pt{\vss % the remark
			\hbox to 0pt{\hskip\hsize\hskip1em           %                will appear only
				\vbox{\hsize2cm\tiny\raggedright\pretolerance10000%                on the side
					\noindent #1\hfill}\hss}\vbox to8pt{\vfil}\vss}}}%
\newcommand{\xyL}[1]{%
	\xydef@\xymatrixrowsep@{#1}
} % end of \xyL
\newcommand{\xyC}[1]{%
	\xydef@\xymatrixcolsep@{#1}
} % end of \xyC
\newcommand\thmsname{Theorem}
\newcommand\nm@thmtype{thm}
\theoremstyle{plain}
\newenvironment{namedthm}[1]{
	\renewcommand\thmsname{#1}\renewcommand\nm@thmtype{namedtheorem}
	\begin{\nm@thmtype}
	}
	{\end{\nm@thmtype}
}
\theoremstyle{plain}
\newtheorem{thm}{\protect\theoremname}[section]
\theoremstyle{definition}
\newtheorem{defn}[thm]{\protect\definitionname}
\theoremstyle{remark}
\newtheorem{rem}[thm]{\protect\remarkname}
\theoremstyle{definition}
\newtheorem{example}[thm]{\protect\examplename}
\theoremstyle{remark}
\theoremstyle{plain}
\newtheorem{prop}[thm]{\protect\propositionname}
\theoremstyle{plain}
\newtheorem{lem}[thm]{\protect\lemmaname}
\numberwithin{equation}{section}
\providecommand{\definitionname}{Definition}
\providecommand{\remarkname}{Remark}
\providecommand{\examplename}{Example}
\providecommand{\lemmaname}{Lemma}
\providecommand{\notationname}{Notation}
\providecommand{\propositionname}{Proposition}
\providecommand{\theoremname}{Theorem}
\newtheorem*{thmA}{Theorem~A}
\newtheorem*{thmB}{Theorem~B}
\newtheorem*{thmC}{Theorem~C}
\begin{document}

\title[]{Normalization of strongly hyperbolic logarithmic transseries and complex Dulac germs}

\author{D. Peran$^1$}

\thanks{This research of D. Peran is fully supported by the Croatian Science Foundation (HRZZ) grant UIP-2017-05-1020. It is also supported by the Hubert-Curien `Cogito’ grant 2021/2022 \emph{Fractal and transserial approach to differential equations}.}
\subjclass[2010]{34C20, 37C25, 39B12, 47H10, 12J15}
\keywords{formal normal forms, analytic normal forms, logarithmic transseries, Dulac germs, strongly hyperbolic fixed point, fixed point theory, formal and analytic normalization, B\"ottcher sequence, asymptotic expansion}

\begin{abstract}
	We give normal forms for strongly hyperbolic logarithmic transseries $f=z^{\alpha }+\ldots \, $, where $\alpha >0$, $\alpha \neq 1$, with respect to parabolic logarithmic normalizations. These normalizations are obtained using fixed point theorems, and are given algorithmically, as limits of Picard sequences in appropriate formal topologies. The results are applied to describe the supports of normalizations and to prove that the strongly hyperbolic complex Dulac germs are analytically normalizable on standard quadratic domains inside the class of complex Dulac germs.
\end{abstract}

\maketitle
\tableofcontents{}

\section{Introduction}

	In a full generality transseries are formal sums of formal products of powers, iterated exponentials and iterated logarithms (see e.g. \cite{Dries}, \cite{adh13}) with real or complex coefficients. Nowdays, transseries play important roles in understanding many problems in mathematics (see e.g. \cite{Ily84}, \cite{ecalle92}) and physics (see e.g. \cite{abs19}). \\
	
	For a given $\alpha \in \mathbb{R}_{>0} \setminus \left\lbrace 1\right\rbrace $, we consider the homological equation
	\begin{align}
		\varphi (f(z)) & =(\varphi (z))^{\alpha } , \quad \varphi (z)=z+o(z), \label{IntroductionEq1}
	\end{align}
	where $f$ is a given complex (real) map with fixed point zero, or a transseries. Therefore, we distinguish two types of Equation \eqref{IntroductionEq1}: \emph{analytic type} (for complex maps $f$ analytic on the \emph{Riemann surface of the logarithm}), and \emph{formal type} (for transseries $f$ and a formal composition). \\
	
	The classical \emph{B\"ottcher Theorem} solves Equation \eqref{IntroductionEq1} for analytic complex diffeomorphisms $f$ at zero (see e.g. \cite{CarlesonG93}, \cite{Milnor06}).
	
	In this paper we solve Equation \eqref{IntroductionEq1} for analytic maps $f$ on an invariant subdomain of the Riemann surface of the logarithm $\widetilde{\mathbb{C}}$ with certain logarithmic asymptotics. In particular, we consider the so-called \emph{Dulac germs} which appear as the \emph{first return maps} (or \emph{Poincaré maps}) in the solution of the hyperbolic variant of the \emph{Dulac problem} of non-accumulation of limit cycles on hyperbolic polycycles of analytic planar vector fields (see e.g. \cite{dulac}, \cite{ecalle92}, \cite{Ily84}, \cite{Ily91}, \cite{Roussarie98}). Dulac's proof is based on the asymptotic expansion of Dulac germs at zero (see \cite{dulac}). It can be shown that Dulac germs admit the so-called \emph{Dulac series} (particular type of transseries) as their asymptotic expansions at zero. There was an imprecision in the Dulac's proof. In particular, Dulac used the fact that every Dulac germ is uniquely determined by its asymptotic expansion, without giving an explicit proof. Fortunately, Dulac problem is solved independently by \emph{Ilyashenko} (see \cite{Ily84}, \cite{Ily91}) and \emph{Écalle} (see \cite{ecalle92}). In particular, Ilyashenko corrected this imprecision of Dulac by proving existence of holomorphic extensions of Dulac germs on sufficiently large complex domains called \emph{standard quadratic domains} (see \cite{Ily84}, \cite{Ily91}, \cite{IlyYak08}) and using the \emph{Phragmen-Lindel\"of Theorem} (see \cite{IlyYak08}) (a maximum modulus principle on unbounded complex domains). \\
	
	The notion of Dulac series leads us naturally to the notion of transseries as their generalization. In this paper we consider \emph{logarithmic transseries}, i.e., transseries that contain only powers and iterated logarithms (introduced in \cite{adh13} as ``purely logarithmic transseries"). The motivation for iterated logarithms comes from Poincar\'e maps of saddle-node polycycle (\cite{Ily84}, \cite{Ily91}). As in \cite{PRRSFormal21} we generalize Dulac germs to the complex Dulac germs on standard quadratic domains with Dulac series with possibly complex coefficients as their asymptotic expansions. Using \cite[Section 7]{Loray21}, \cite[Section 22C]{IlyYak08} and \cite[Section 3]{Roussarie98}, it is explained in \cite[Subsection 2.1]{PRRSDulac21} that complex Dulac germs on standard quadratic domains appear as \emph{corner maps} in hyperbolic complex saddles in $\mathbb{C}^{2}$.
	
	More about logarithmic transseries, complex Dulac germs and their formal and analytic normal forms, as well as more detailed proofs of the results stated in this paper can be found in the thesis of Peran \cite{Peran21Thesis}. \\
	
	In order to prove the existence and the uniqueness of the solution of Equation \eqref{IntroductionEq1} in the class of complex Dulac germs, we split our proof in two parts: the \emph{formal} and the \emph{analytic} part. These parts are proved in two main results of this paper: Theorem~A and (resp.) Theorem~C, which are stated in their full form in Section~\ref{SectionMainResults}.
	
	We first solve Equation \eqref{IntroductionEq1} in the class of logarithmic transseries $f$ (Theorem~A) in Section~\ref{sec:proofThmA}. In Theorem~B, in Section~\ref{SectionNormalizationAnalytic}, we solve Equation \eqref{IntroductionEq1} for analytic complex maps $f$ on suitable $f$-invariant complex domains. Finally, we relate these two independent results to prove Theorem~C, in Section~\ref{sec:proofThmB}. Theorems A and C are stated in Section~\ref{SectionMainResults}. \\
	
	\noindent \emph{\textbf{The formal part.}} \\
	
	We state here Theorem~A in its short form, since it is the main result of the formal part.
	
	\begin{namedthm}{Theorem~A (short form)}
		Every strongly hyperbolic logarithmic transseries $f=z^{\alpha }+o(z^{\alpha })$, $\alpha >0$, $\alpha \neq 1$, can be formally normalized to its first term $f_{0}:=z^{\alpha }$ by the unique parabolic logarithmic change of variables $\varphi $.
		
		Furthermore, if $\alpha >1$, $\varphi $ is the limit of the so-called \emph{B\"ottcher sequence}
		\begin{align*}
			& \left( z^{\frac{1}{\alpha ^{n}}} \circ h\circ f^{\circ n} \right) _{n}
		\end{align*}
		in the appropriate formal topology, for every initial parabolic condition $h$.
	\end{namedthm}
	
	Although Ilyashenko and Écalle considered the Dulac problem even for semi-hyperbolic polycycles (in a full generality), those results are much less understood than those for hyperbolic polycycles. In particular, in the semi-hyperbolic variant of the Dulac problem, the iterated logarithms appear in the asymptotic expansions of the first return maps. Therefore, we prove Theorem~A in a full generality (even for logarithmic transseries involving iterated logarithms).
	
	It turns out that Equation \eqref{IntroductionEq1} can be solved formally only if logarithmic transseries $f$ is of the form $f=\lambda z^{\alpha }+\textrm{``higher order terms"}$, $\lambda >0$. We call such $f$ a \emph{strongly hyperbolic} logarithmic transseries.
	
	Theorem~A is a generalization of results obtained in \cite{mrrz1} for strongly hyperbolic logarithmic transseries that do not contain iterated logarithms, and the results from e.g. \cite{CarlesonG93}, \cite{Loray98}, \cite{Milnor06}, for standard strongly hyperbolic power series. In \cite{mrrz1} the unique solution of Equation \eqref{IntroductionEq1} is obtained for logarithmic transseries $f$ that do not contain iterated logarithms, using transfinite compositions of elementary changes of variables.
	
	In \cite{mrrz2} and \cite{mrrz3} the formal and the analytic class of \emph{parabolic Dulac germs} (i.e., Dulac germs of a form $f(z)=z+o(z)$) are obtained, but not for their complex counterparts. It turns out that, in order to describe these classes, one must consider logarithmic transseries involving iterated logarithms.
	
	Most recently, in \cite{Peran21Parabolic} the formal normal form for parabolic logarithmic transseries is obtained, and in \cite{PRRSFormal21} and \cite{PRRSDulac21} the \emph{Schr\"oder's equation} (see e.g. \cite{Schroder70}) is solved formally and analytically for hyperbolic logarithmic transseries (logarithmic transseries of a form $f=\lambda z+\textrm{``higher order terms"}$, $\lambda \in \mathbb{R}_{>0}\setminus \left\lbrace 1\right\rbrace $) and for hyperbolic complex Dulac germs. In particular, Theorem~A can be viewed as a strongly hyperbolic analogue of the Main Theorem in \cite{PRRSFormal21}. Theorem~A is proved using the fixed point theorem (Proposition~\ref{PropFixedPoint}) from \cite[Proposition 4.2]{PRRSFormal21} and using the contractibility of the so-called \emph{B\"ottcher operator} $\mathcal{P}_{f}(h):=\left( h\circ f\right) ^{\frac{1}{\alpha }}$ in a suitable formal metric. \\
	
	\noindent \emph{\textbf{The analytic part.}} \\
	
	We state here Theorem~C in its short form, since it is the main result of the analytic part.
	
	\begin{namedthm}{Theorem~C (short form)}
		Every strongly hyperbolic (complex) Dulac germ $f(z)=z^{\alpha }+o(z^{\alpha })$, $\alpha >1$, on a standard quadratic domain (given in the standard $z$-chart), can be analytically normalized to the map $z\mapsto z^{\alpha }$ by the unique parabolic (complex) Dulac germ.
	\end{namedthm}

	Although Theorem~B is not the main result of the analytic part, we state it here in its short form, since it is an interesting result by itself and it is crucial for proving Theorem~C.
	
	\begin{namedthm}{Theorem~B (short form)}
		Let $\alpha >1$, $\varepsilon >0$ and $k\in \mathbb{N}$. Every analytic germ $f(\zeta)= \alpha \zeta +o\Big( \big( \log ^{\circ k}\zeta \big) ^{-\varepsilon } \Big) $ defined on an admissible domain $D\subseteq \mathbb{C}^{+}$ of type $(\alpha ,\varepsilon ,k)$ can be analytically linearized by the unique analytic germ $\varphi (\zeta )=\zeta +o(1)$.
	\end{namedthm}
	Theorem~C and Theorem~B can be viewed as strongly hyperbolic versions of their hyperbolic counterparts from \cite{PRRSDulac21}. As opposed to Theorem~C, where we request the complex Dulac germs to have full asymptotic expansions, analytic maps in Theorem~B have only the initial parts of the potential asymptotic expansions. Motivated by the B\"ottcher Theorem for analytic strongly hyperbolic germs, in Theorem~B we solve Equation \eqref{IntroductionEq1} in the $\zeta := - \log z$-chart which is global for the Riemann surface of the logarithm. Note that Equation \eqref{IntroductionEq1} in the $\zeta $-chart becomes the Schr\"oder's type equation $\varphi (f(\zeta )) = \alpha \cdot \varphi (\zeta )$, where $\zeta $ is a variable at infinity. In Theorem~B we prove the convergence of the \emph{Koenigs sequence} $\left( \frac{1}{\alpha ^{n}}f^{\circ n}(\zeta )\right) _{n}$ (from the classical \emph{Koenigs Theorem} (see e.g. \cite{Koenigs84}, \cite{CarlesonG93}, \cite{Milnor06})) towards the solution of Equation \eqref{IntroductionEq1} in the $\zeta $-chart. Theorem~B can be viewed as a generalization of the classical B\"ottcher Theorem to complex analytic maps with logarithmic asymptotics on invariant complex domains.
	
	Finally, in Section~\ref{sec:proofThmB}, Theorem~A and B are related using a suitable \emph{Schr\"oder's type homological equation} in order to prove the Dulac type of the solution in Theorem~C. Thus, in Theorem~C, we also prove that the formal class of a strongly hyperbolic complex Dulac germ completely determines its analytic class. \\
	
	In the future work, we hope to be able to use Theorem~A, \cite[Main Theorem]{PRRSFormal21}, \cite[Main Theorem]{Peran21Parabolic} and the ideas from the proof of Theorem~C, \cite{PRRSDulac21} and \cite{mrrz2}, to understand better the dynamics of the first return maps of semi-hyperbolic polycycles, which are, in general, more complicated than Dulac germs, since they can include also iterated logarithms.  

\medskip

\section{Prerequisites}\label{sec:basicnotions}

This section serves as a prerequisite for Sections~\ref{SectionMainResults}-\ref{sec:proofThmB} where the main theorems are stated and proved. We briefly recall notions of the differential algebra of logarithmic transseries, power-metric and weak topology, composition of logarithmic transseries, complex (real) Dulac germs and series (see \cite{PRRSFormal21} and \cite{PRRSDulac21}).

\medskip

\subsection{Differential algebras $\mathfrak L$ and $\mathfrak L^{\infty }$} In this subsection we briefly recall differential algebras $\mathfrak L$ and $\mathfrak L^{\infty }$ that are introduced in \cite{PRRSFormal21} as special models of a more general structure of the \emph{logarithmic-exponential field} introduced in \cite{Dries}. Furthermore, $\mathfrak L$ is contained in the set $\mathbb{T}_{\log }$ of ``purely logarithmic transseries" introduced in \cite{adh13}.

Let $\mathcal{L}_{k}$, $k\in \mathbb{N}$, be the set of all \emph{logarithmic transseries} $f$ \emph{of depth} $k$ (referring to the depth of the iterated logarithm), where
\begin{align}
	f := \sum _{(\alpha ,\mathbf{n})\in \mathbb{R}\times \mathbb{Z}^{k}}a_{\alpha ,\mathbf{n}}z^{\alpha }\boldsymbol{\ell}_{1}^{n_{1}}\cdots \boldsymbol{\ell}_{k}^{n_{k}} . \label{TransseriesDef} 
\end{align}
Here, $\boldsymbol{\ell }_{1}:=-\frac{1}{\log z}$, $\boldsymbol{\ell}_{n+1}:=\boldsymbol{\ell }_{1} \circ \boldsymbol{\ell }_{n}$, $n\in \mathbb{N} $, $a_{\alpha ,\mathbf{n}} \in \mathbb{R} $, $\mathbf{n}:=(n_{1},\ldots ,n_{k}) \in \mathbb{Z}^{k}$ is a multi-index, and
\begin{align*}
	\mathrm{Supp} \, (f) := \left\lbrace (\alpha , \mathbf{n}) \in \mathbb{R}\times \mathbb{Z}^{k} : a_{\alpha , \mathbf{n}} \neq 0\right\rbrace 
\end{align*}
is a \emph{well-ordered} subset of $\mathbb{R}\times \mathbb{Z}^{k}$ such that $\min \mathrm{Supp}\, (f) > \mathbf{0}_{k+1}$\footnote{Here, $\boldsymbol{0}_{k+1}:=(0,\ldots ,0) \in \mathbb{R}\times \mathbb{Z}^{k}$ is a multi-index.}, with respect to the \emph{lexicographic order} (if $\mathrm{Supp}\, (f) \neq \emptyset $). If $\mathrm{Supp} \, (f)=\emptyset $, then we call $f$ the \emph{zero logarithmic transseries} and denote it by $0$. We call $\mathrm{Supp} \, (f)$ the \emph{support} of $f$.

We call $a_{\alpha ,\mathbf{n}}z^{\alpha }\boldsymbol{\ell}_{1}^{n_{1}}\cdots \boldsymbol{\ell}_{k}^{n_{k}}$ in \eqref{TransseriesDef} a \emph{term} in $f$. Furthermore, we call $a_{\alpha ,\mathbf{n}}$ a \emph{coefficient of the term} $a_{\alpha ,\mathbf{n}}z^{\alpha }\boldsymbol{\ell}_{1}^{n_{1}}\cdots \boldsymbol{\ell}_{k}^{n_{k}}$ in $f$, and denote it by $\left[ f\right] _{\alpha , \mathbf{n}}$.

If $f\neq 0$, we define the \emph{order} of the logarithmic transseries $f\in \mathcal{L}_{k}$, $k\in \mathbb{N}$, by $\mathrm{ord} \, (f):=\min \mathrm{Supp} \, (f)$. If $f$ is the zero transseries, we put $\mathrm{ord}\, (f):=+\infty $. Furthermore, we define the \emph{leading term of} $f$ as the term of order $\mathrm{ord} \, (f)$ in the logarithmic transseries $f$ (if $f\neq 0$), and denote it by $\mathrm{Lt} \, (f)$.

By the usual identification of $\mathbb{R}\times \mathbb{Z}^{k}$ as a subset of $\mathbb{R}\times \mathbb{Z}^{k+1}$, note that $\mathcal{L}_{k} \subseteq \mathcal{L}_{k+1}$, $k\in \mathbb{N}$. Now, by $\mathfrak L$ we define an increasing union:
\begin{align*}
	\mathfrak{L} & := \bigcup _{k\in \mathbb{N}}\mathcal{L}_{k} .
\end{align*}
Note that $\mathfrak L$ is a differential algebra, with respect to the derivation $\frac{d}{dz}$, and that $\mathcal{L}_{k}$, $k\in \mathbb{N}$, are its subalgebras. We call $\mathfrak L$ the \emph{differential algebra of logarithmic transseries}.

For convenience, we often use the so-called \emph{blockwise} notation (see e.g. \cite{PRRSFormal21}, \cite{mrrz2}):
\begin{align*}
	f := \sum _{\alpha \in \mathbb{R}}z^{\alpha }R_{\alpha } ,
\end{align*} 
where
\begin{align*}
	R_{\alpha }:=\sum _{(\alpha ,\mathbf{n}) \in \mathrm{Supp} \, (f)}a_{\alpha ,\mathbf{n}}\boldsymbol{\ell}_{1}^{n_{1}}\cdots \boldsymbol{\ell}_{k}^{n_{k}}, \quad \alpha \in \mathbb{R} , \, \boldsymbol{n}=(n_{1},\ldots ,n_{k}),
\end{align*}
and
\begin{align*}
	\mathrm{Supp}_{z} \, (f) := \left\lbrace \alpha \in \mathbb{R} : R_{\alpha } \neq 0 \right\rbrace 
\end{align*}
is a well-ordered subset of $\mathbb{R} _{\geq 0}$. We call $\mathrm{Supp}_{z} \, (f)$ the \emph{support of} $f$ \emph{in} $z$, and $z^{\alpha }R_{\alpha }$, $\alpha \in \mathrm{Supp}_{z} \, (f)$, the $\alpha $\emph{-block} of a logarithmic transseries $f$.

We define the \emph{order of} $f$ \emph{in} $z$ as $\mathrm{ord}_{z} \, (f):=\min \mathrm{Supp} _{z} \, (f)$. Put $\gamma := \mathrm{ord}_{z} \, (f)$. We call the $\gamma $-block of $f$ the \emph{leading block} of a logarithmic transseries $f$.

We use the following acronyms:
\begin{enumerate}[1., font=\textup, topsep=0.2cm, itemsep=0.2cm, leftmargin=0.6cm]
	\item $f=g+\mathrm{h.o.t.}$ (which means: \emph{higher order terms}) if the order of every term in $g$ is strictly smaller than $\mathrm{ord} \, (f-g)$.
	\item $f=g+\mathrm{h.o.b.}(z)$ (which means: \emph{higher order blocks in} $z$) if the order in $z$ of every block in $g$ is strictly smaller than $\mathrm{ord}_{z} \, (f-g)$.
\end{enumerate}

If we allow in the definition of logarithmic transseries $f$ of depth $k\in \mathbb{N}$ that $\mathrm{Supp}\, (f)$ contains elements of $\mathbb{R}\times \mathbb{Z}^{k}$ that are of order less than or equal to the $\mathbf{0}_{k+1}$, then $\mathcal{L}_{k}$ is extended to the differential algebra that we denote by $\mathcal{L}_{k}^{\infty }$. Put
\begin{align*}
	\mathfrak L^{\infty } & :=\bigcup _{k\in \mathbb{N}}\mathcal{L}_{k}^{\infty } .
\end{align*}
Note that $\mathfrak L^{\infty }$ is a differential algebra, and $\mathfrak L$ and $\mathcal{L}_{k}^{\infty }$, $k\in \mathbb{N}$, are its subalgebras. Now, notions of \emph{leading term} (\emph{block}), \emph{order} (\emph{in} $z$), are defined analogously in the larger differential algebra $\mathfrak L^{\infty }$.

\medskip

\subsection{Two topologies on the differential algebra $\mathfrak L^{\infty }$}\label{SubsectionMetricTopology}

In this subsection we briefly recall the two topologies on the differential algebra $\mathfrak L^{\infty }$ that are introduced in \cite{PRRSFormal21}: the \emph{power-metric topology} (originally introduced in \cite{Dries} as the \emph{valuation topology}) and the \emph{weak topology} (originally introduced in \cite{mrrz1} on the differential algebra $\mathcal{L}_{1}$). \\

\noindent \emph{The power-metric topology.} We define $d_{z}:\mathfrak L^{\infty }\times \mathfrak L^{\infty }\to \mathbb{R} $ by:
$$
d_{z}(f,g):=\left\lbrace \begin{array}{ll}
	2^{-\mathrm{ord}_{z}(f-g)}, & f\neq g, \\
	0, & f=g.
\end{array} \right. 
$$
We call $d_{z}$ the \emph{power-metric}, and the induced topology, the \emph{power-metric topology} that we denote by $\mathcal{T}_{d_{z}}$. \\

\noindent \emph{The weak topology.} We consider $\mathfrak L^{\infty }$ as a subspace of the space $\mathbb{R} ^{\mathbb{R}\times \mathbb{Z} ^{\mathbb{N}}}$, with respect to the product topology, where the Euclidean topology is taken on every coordinate space. We call the related relative topology on $\mathfrak L^{\infty }$ the \emph{weak topology}, and denote it by $\mathcal{T}_{w}$.

From the definition of a product topology, it follows that a sequence $(\varphi _{n})$ converges to $\varphi $ in the weak topology on $\mathfrak L^{\infty }$ if and only if the sequence of coefficients $\left( \left[ \varphi _{n}\right] _{w}\right) _{n}$ converges towards the coefficient $\left[ \varphi \right] _{w}$ in the Euclidean topology, for every $w\in \mathbb{R}\times \mathbb{Z} ^{\mathbb{N}}$. Furthermore, it follows that
\begin{align*}
	\mathrm{Supp} \, (\varphi ) & \subseteq \bigcup _{n\in \mathbb{N}}\mathrm{Supp} \, (\varphi _{n}) .
\end{align*}

Now, suppose that $(\varphi _{n})$ is a sequence in $\mathfrak L^{\infty }$ that converges on the product $\mathbb{R} ^{\mathbb{R}\times \mathbb{Z} ^{\mathbb{N}}}$ towards $\varphi $. Then $\varphi $ is not necessarily in $\mathfrak L^{\infty }$, but, if we additionally assume that $\bigcup _{n\in \mathbb{N}}\mathrm{Supp} \, (\varphi _{n})$ is a well-ordered subset of $\mathbb{R}\times \mathbb{Z}^{\mathbb{N}}$ (with respect to the lexicographic order), then $\varphi \in \mathfrak L^{\infty }$ (for details see \cite{Peran21Thesis}). \\

Note that $\mathcal{T}_{w} \subsetneqq \mathcal{T}_{d_{z}}$ (see \cite{mrrz1} and \cite{Peran21Thesis} for counter-examples).

\medskip

\subsection{Groups $\mathcal{L}_{k}^{H}$ and $\mathcal{D}$}\label{SubsectionGroupsL}

Since $\boldsymbol{\ell }_{k+1}=\boldsymbol{\ell }_{1} \circ \boldsymbol{\ell }_{k}$, $k\in \mathbb{N}$, it is easy to see that the composition is not well-defined on the differential algebra $\mathcal{L}_{k}$, $k\in \mathbb{N} $, so, as in \cite{PRRSFormal21} and \cite{mrrz1} (in $\mathcal{L}_{1}$), we restrict ourselves on the subset $\mathcal{L}_{k}^{H}\subseteq \mathcal{L}_{k}$ of all logarithmic transseries $f$ that do not contain logarithms in their leading terms, i.e., $f=\lambda z^{\alpha }+\mathrm{h.o.t.}$, for $\alpha , \lambda >0$. \\

Let $h \in \mathcal{L}_{k}^{H}$ such that $h:=\lambda z^{\alpha }+h_{1}$, where $\alpha , \lambda >0$ and $\mathrm{ord} \, (h_{1}) > (\alpha , \boldsymbol{0}_{k})$. Now, put:
\begin{align}
	& \log z \circ (\lambda z^{\alpha }+h_{1}) := \log \lambda + \alpha \log z +\sum _{i\geq 1}\frac{(-1)^{i+1}}{i}\left( \frac{h_{1}}{\lambda z^{\alpha }}\right) ^{i} , \nonumber \\
	& z^{\beta } \circ (\lambda z^{\alpha }+h_{1}) := \lambda ^{\beta }z^{\alpha \beta } \cdot \sum _{i\geq 0}{\beta \choose i}\left( \frac{h_{1}}{\lambda z^{\alpha }}\right) ^{i} , \label{CompEq1}
\end{align}
for $\beta >0$. It is easy to see that $\log z \circ (\lambda z^{\alpha }+h_{1})$ and $z^{\beta } \circ (\lambda z^{\alpha }+h_{1})$ are logarithmic transseries. \\

Let $g \in \mathcal{L}_{k}$, $k\in \mathbb{N}$, and $f \in \mathcal{L}_{k}^{H}$ such that $f:=\lambda z^{\alpha }+f_{1}$, where $\alpha , \lambda >0$ and $\mathrm{ord} \, (f_{1}) > (\alpha , \boldsymbol{0}_{k})$. We use formulas \eqref{CompEq1} \emph{term-by-term} to define $g(\lambda z^{\alpha })$ (for more details see Lemma~$A.3.1$ and Lemma~$A.3.2$ in \cite{Peran21Thesis}). Now, by the \emph{Taylor Theorem} (see \cite[Proposition 3.3]{PRRSFormal21}, or more generally \cite{Dries}), we define the \emph{composition of transseries} $g$ and $f$ by:
\begin{align}
	g \circ f & := g(\lambda z^{\alpha }) + \sum _{i\geq 1}\frac{g^{(i)}(\lambda z^{\alpha })}{i!}f_{1}^{i} . \label{CompEq2} 
\end{align}
Here, the above series in \eqref{CompEq1} and \eqref{CompEq2} converge in the weak topology. This is proven in \cite[Proposition 3.3]{PRRSFormal21} using the \emph{Neumann Lemma} (see \cite{Neumann49}) and the notion of \emph{summable families} defined in \cite{Dries}.

It can be shown that $\mathcal{L}_{k}^{H}$, $k\in \mathbb{N}$, is a group under composition (see e.g. \cite{Dries}). \\

Now, put
\begin{align*}
	\mathfrak L^{H} & :=\bigcup _{k\in \mathbb{N}}\mathcal{L}_{k}^{H} .
\end{align*}

As in \cite{PRRSFormal21} and \cite{mrrz1} (in $\mathcal{L}_{1}$) we define three types of logarithmic transseries $f\in \mathfrak L^{H}$, $f:=\lambda z^{\alpha }+\mathrm{h.o.t.}$, for $\alpha , \lambda > 0$:
\begin{enumerate}[1., font=\textup, topsep=0.2cm, itemsep=0.2cm, leftmargin=0.6cm]
	\item \emph{parabolic} (or \emph{tangent to the identity}), if $\alpha =\lambda =1$,
	\item \emph{hyperbolic}, if $\alpha =1$, $\lambda \neq 1$,
	\item \emph{strongly hyperbolic}, if $\alpha \neq 1$.
\end{enumerate}
We denote by $\mathcal{L}_{k}^{0}$, $k\in \mathbb{N}$, the set of all parabolic logarithmic transseries in $\mathcal{L}_{k}^{H}$. Now, put
\begin{align*}
	\mathfrak L^{0} & :=\bigcup _{k\in \mathbb{N}}\mathcal{L}_{k}^{0} .
\end{align*}
It is easy to see that $\mathfrak L^{0}$ and $\mathcal{L}_{k}^{0}$, $k\in \mathbb{N}$, are subgroups of the group $\mathfrak L^{H}$. \\

Following \cite{IlyYak08} and \cite{mrrz2}, we define the \emph{Dulac series} $f\in \mathcal{L}_{1}^{H}$ as logarithmic series of the form:
\begin{align*}
	f := \lambda z^{\alpha } + \sum _{n\in \mathbb{N} _{\geq 1}}z^{\alpha _{n}}P_{n}(\boldsymbol{\ell}_{1}^{-1}) ,
\end{align*}
where $\alpha , \lambda > 0$, and $(\alpha _{n})$ is a strictly increasing sequence of real numbers strictly bigger than $\alpha $ and tending to $+\infty $. Furthermore, $(P_{n})$ is a sequence of real polynomials in the variable $\boldsymbol{\ell}_{1}^{-1}=-\log z$.

We denote by $\mathcal{D}$ the \emph{set of Dulac series}. It is easy to see that $\mathcal{D}$ is a subgroup of the group $\mathcal{L}_{1}^{H}$ (see \cite[Section 0]{Ily91}).

\medskip

\subsection{Complex Dulac germs}

In this subsection we recall from \cite{PRRSDulac21} the notion of complex Dulac germs. We first recall the notion of analytic germs on spiraling domains around the origin of the Riemann surface of the logarithm.

\smallskip

\subsubsection{Analytic germs on spiraling domains}

Let us recall a few basic notions from \cite[Section 1]{PRRSDulac21}. The \emph{Riemann surface of the logarithm} is the set
\begin{align*}
	\widetilde{\mathbb{C}} := \left\lbrace (r,\theta ) : r\in \mathbb{R}_{>0}, \, \theta \in \mathbb{R} \right\rbrace 
\end{align*}
endowed with the structure of one-dimensional analytic Riemann manifold whose atlas contains only one chart $-\log :\widetilde{\mathbb{C}}\to \mathbb{C}$ called the \emph{logarithmic chart} (or the $\zeta $-chart) where
\begin{align}
	& (r,\theta ) \to \zeta := -\log r - \mathrm{i} \cdot \theta . \label{Surface1}
\end{align}
By the classical abuse of the notation we identify $z:=r\cdot \mathrm{e}^{\mathrm{i}\cdot \theta }$, where we distinguish $\mathrm{e}^{\mathrm{i}\cdot \theta }$ and $\mathrm{e}^{\mathrm{i}\cdot (\theta +2k\pi )}$, for every $k\in \mathbb{Z}$. In this case we say that elements of $\widetilde{\mathbb{C}}$ are written in the \emph{standard} $z$\emph{-chart}. Using the notation from \eqref{Surface1}, we get $\zeta = - \log z$ and $z=\mathrm{e}^{-\zeta }$. \\

\smallskip

A \emph{spiraling domain} $\mathcal{N}$ around the origin of the Riemann surface of the logarithm is defined as the set
\begin{align*}
	\mathcal{N} & := \left\lbrace r\cdot \mathrm{e}^{\mathrm{i}\cdot \theta } : 0<r<h(\theta ) \right\rbrace 
\end{align*}
for a continuous map $h:\mathbb{R}\to \left( 0,+\infty \right) $ (see \cite[Subsection 6.8]{MiSa}). \\

We define \emph{germs} on spiraling domains standardly (see \cite[Subsection 6.8]{MiSa}) as equivalence classes of maps on spiraling domains, where two maps $f$ and $g$ are representatives of the same germ if they coincide on some spiraling domain. By the classical abuse of notation we often identify germs with their representatives.

We say that a germ on a spiraling domain around the origin of the Riemann surface of the logarithm is \emph{analytic} if it has a representative $f$ on some spiraling domain $\mathcal{N}$ such that the related map $f(\zeta ):=-\log \left( f\left( \mathrm{e}^{-\zeta }\right) \right) $ is analytic on the domain $-\log (\mathcal{N})$ in the $\zeta $-chart. \\

\smallskip

Let $\mathbb{C}^{+}:=\left\lbrace z\in \mathbb{C} : \Re (z) > 0\right\rbrace $. We say that a germ $f$ on a spiraling domain $\mathcal{N}$ around the origin of the Riemann surface of the logarithm is:
\begin{enumerate}[1., font=\textup, topsep=0.4cm, itemsep=0.4cm, leftmargin=0.6cm]
	\item \emph{parabolic}, if $f(\zeta )=\zeta +o(1)$, uniformly on $-\log \left( \mathcal{N}\right) $, as $\Re (\zeta ) \to + \infty $,
	\item \emph{hyperbolic}, if $f(\zeta )=\zeta + \beta +o(1)$, for $\beta \in \mathbb{C}\setminus \mathrm{i}\mathbb{R}$, uniformly on $-\log \left( \mathcal{N}\right) $, as $\Re (\zeta ) \to + \infty $,
	\item \emph{strongly hyperbolic}, if $f(\zeta )=\alpha \zeta +\beta +o(1)$, $\alpha \in \mathbb{R}_{>0}\setminus \left\lbrace 1\right\rbrace $, $\beta \in \mathbb{C}$, uniformly on $-\log \left( \mathcal{N}\right) $, as $\Re (\zeta ) \to + \infty $.
\end{enumerate}

\begin{rem}
	Switching from the $\zeta $-chart to the $z$-chart, in the above definition, we get that a germ $f$ on a spiraling domain $\mathcal{N}$ (given in the $z$-chart) is:
	\begin{enumerate}[1., font=\textup, topsep=0.4cm, itemsep=0.4cm, leftmargin=0.6cm]
		\item \emph{parabolic}, if and only if $f(z)=z+o(z)$ uniformly on $\mathcal{N}$, as $\left| z\right| \to 0$,
		\item \emph{hyperbolic}, if and only if $f(z)=\mathrm{e}^{-\beta }z+o(z)$, for $\beta \in \mathbb{C}\setminus \mathrm{i}\mathbb{R}$, uniformly on $\mathcal{N}$, as $\left| z\right| \to 0$,
		\item \emph{strongly hyperbolic}, if and only if $f(z)=\mathrm{e}^{-\beta }z^{\alpha }+o\left( z^{\alpha }\right) $, $\alpha \in \mathbb{R}_{>0}\setminus \left\lbrace 1\right\rbrace $, $\beta \in \mathbb{C}$, uniformly on $\mathcal{N}$, as $\left| z\right| \to 0$.
	\end{enumerate}
	The above notions are motivated by the definitions of parabolic, hyperbolic and strongly hyperbolic power series (see e.g. \cite{CarlesonG93}, \cite{Milnor06}) and logarithmic transseries (Subsection~\ref{SubsectionGroupsL}). 
\end{rem}

\smallskip

\subsubsection{Complex Dulac germs and complex Dulac series}\label{SubsubsectionDulac}

Let $\mathcal{L}_{k}(\mathbb{C})$ be the set of all logarithmic transseries of depth $k\in \mathbb{N}$ with complex coefficients. Note that $\mathcal{L}_{k}$ is a subalgebra of the differential algebra $\mathcal{L}_{k}(\mathbb{C})$. Similarly, we define $\mathcal{L}_{k}^{\infty }(\mathbb{C})$, $k\in \mathbb{N}$, $\mathfrak L(\mathbb{C})$ and $\mathfrak L^{\infty }(\mathbb{C})$. Furthermore, we denote by $\mathcal{D}(\mathbb{C})$ the group of all Dulac series with complex coefficients. We call every element of $\mathcal{D}(\mathbb{C})$ a \emph{complex Dulac series}.

Let
\begin{align}
	f & := \lambda z^{\alpha }+\sum _{i=1}^{+\infty }z^{\alpha _{i}}P_{i}\left( \boldsymbol{\ell }_{1}^{-1}\right) \label{Transformation1}
\end{align}
be a complex Dulac series, where $(\alpha _{i})$ is a strictly increasing sequence of real numbers strictly bigger than $\alpha $ and tending to $+\infty $. Furthermore, $(P_{i})$ is a sequence of polynomials in the variable $\boldsymbol{\ell }_{1}^{-1}=-\log z$ with complex coefficients. By the formal change of variables $\zeta =-\log z$, we get
\begin{align}
	f(\zeta ) & = -\log \left( f\left( \mathrm{e}^{-\zeta }\right) \right) \nonumber \\
	& = \alpha \zeta - \log \lambda +\sum _{i=1}^{+\infty }\mathrm{e}^{-\beta _{i}\zeta }Q_{i}\left( \zeta \right) , \label{Transformation2}
\end{align}
where $(\beta _{i})$ is a strictly increasing sequence of positive real numbers tending to $+\infty $. Furthermore, $(Q_{i})$ is a sequence of polynomials in the variable $\zeta $ with complex coefficients. \\

In Remark~\ref{RemarkTransformation} we relate asymptotic expansions in the $z$-chart and the $\zeta $-chart.

\begin{rem}\label{RemarkTransformation}
	Let $f:\left( 0,d\right) \to \mathbb{R}$ be a map, for $d\in \mathbb{R}_{>0}$, and let $\widehat{f}(z)$ be a Dulac series (with real coefficients) given by \eqref{Transformation1}. Let $\widehat{f}(\zeta )$ be obtained from $\widehat{f}(z)$ by the formal changes of variables $\zeta =-\log z$, and therefore, given by \eqref{Transformation2}. Note that $\widehat{f}(z)$ is the asymptotic expansion of $f$ on $\left( 0,d\right) $ as $z\to 0$ if and only if $\widehat{f}(\zeta )$ is the asymptotic expansion of $f(\zeta ):=-\log \left( f\left( \mathrm{e}^{-\zeta }\right) \right) $ on $\left(-\log d,+\infty \right) $ as $\zeta \to +\infty $.
\end{rem}

\begin{defn}[Standard quadratic domain, Definition 24.25 in \cite{IlyYak08}]\label{DefinitStandardQuadraticDomain}
	The \emph{standard quadratic domain} $\mathcal{R}_{C}\subseteq\widetilde{\mathbb{C}}$, $C\in \mathbb{R}_{>0}$, is the set defined in the logarithmic chart as
	\begin{align}
		& \kappa\big(\mathbb{C}^{+}\big),\text{ where }\kappa(\zeta)=\zeta+C(\zeta+1)^{\frac{1}{2}},\label{eq:sqd}
	\end{align}
	(see Figure~\ref{Figure1}). 
\end{defn}

\begin{figure}[h!]
	\centering
	\begin{tikzpicture}
		\filldraw[draw=none,fill=blue!30!white] (0,-2.4) rectangle (2.2,2.4);
		\draw (3.65,1.5) node{$\kappa $};
		\filldraw[draw=none,fill=blue!30!white] (9.1,-3.1) -- (6.6,-0.96) -- (6.6,0.96) -- (9.1,3.1) -- cycle;
		\filldraw[draw=black,thick,fill=blue!30!white] (6.9,0) circle (1cm);
		\filldraw[draw=none,thick,fill=blue!30!white,rounded corners] (6.62,-1.05) rectangle (8.5,1.05);
		\filldraw[draw=black,thick,fill=white] (6.5,0.92) arc (290:330:5cm);
		\filldraw[draw=black,thick,fill=white] (6.5,-0.92) arc (70:30:5cm);
		\draw[red!40!white,very thick,->] (2.8,1) to [in=150,out=30] (4.5,1);
		\draw (5.9,0) node[draw=black,thick,fill=black,circle,scale=0.4]{};
		\draw (5.9,0) node[below left]{$C$} (1.1,1.7) node{$\mathbb C^+$} (8.4,1.4) node{$\mathcal{R}_{C}$};
		\draw[black,thick] (-1,0) -- (2.5,0) (0,-2.7) -- (0,2.7) (4,0) -- (9.4,0) (5,-3.2) -- (5,3.2);
	\end{tikzpicture}
	\caption{The standard quadratic domain $\mathcal{R}_{C}$, $C>0$, in the $\zeta $-chart (\cite[Figure 1]{PRRSDulac21}).}
	\label{Figure1}
\end{figure}
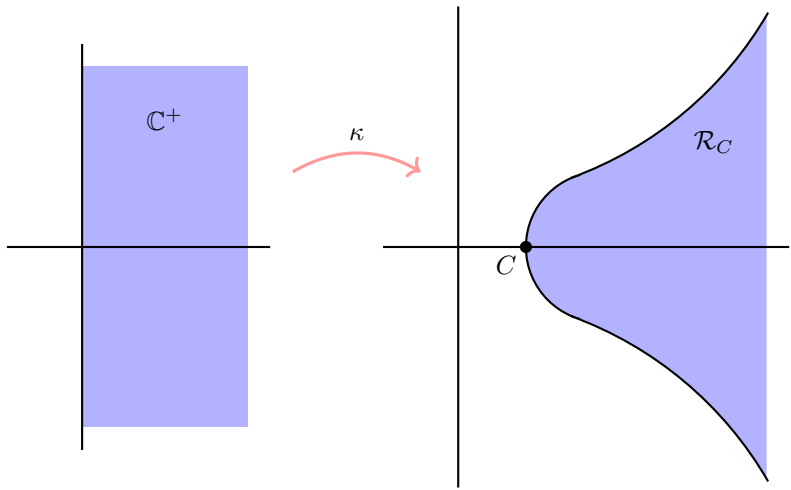

In statement 1 of Remark~\ref{RemarkStandardQ} we prove that standard quadratic domains are examples of spiraling domains around the origin of the Riemann surface of the logarithm. In statement 2 of Remark~\ref{RemarkStandardQ} we point out some important property of standard quadratic domains that will be used in the proof of Theorem~C in Section~\ref{sec:proofThmB}.

\begin{rem}\label{RemarkStandardQ}\hfill
	\begin{enumerate}[1., font=\textup, topsep=0.4cm, itemsep=0.4cm, leftmargin=0.6cm]
		\item As in \cite[Example (3)]{PRRSDulac21}, by a direct computation, the upper half of the boundary of the standard quadratic domain $\mathcal{R}_{C}$, $C>0$, is parametrized by:
		\begin{align*}
			r\mapsto x(r)+\mathrm{i}\cdot y(r) & =C\sqrt[4]{r^{2}+1}\cos\left(\frac{1}{2}\mathrm{arctg}\,r\right)+ \\
			& +\mathrm{i}\cdot\left(r+C\sqrt[4]{r^{2}+1}\sin\left(\frac{1}{2}\mathrm{arctg}\,r\right)\right),\ r\in\left[0,+\infty\right).
		\end{align*}
		The lower half of the boundary of the standard quadratic domain $\mathcal{R}_{C}$ is parametrized by $r\mapsto x(r)-\mathrm{i}\cdot y(r)$, $r\in \left[ 0,+\infty \right) $. Note that $r\mapsto x(r)$ and $r\mapsto y(r)$ are continuous maps. It can be shown that $r\mapsto x(r)$ and $r\mapsto y(r)$ are strictly increasing maps, so by putting $h:=x\circ y^{-1}$, on $\left[ 0,+\infty \right) $ and $h(y):=h(-y)$, for $y\in \left( -\infty ,0\right] $, it follows that standard quadratic domain $\mathcal{R}_{C}$ is a spiraling domain around the origin of the Riemann surface of the logarithm.
		\item Let $\mathcal{R}_{C}$, $C>0$, be a standard quadratic domain given in the $\zeta $-chart and let $R>0$. Note that there exists $C_{1}>C,R$ large enough such that $\mathcal{R}_{C_{1}}\subseteq \mathcal{R}_{C}$ and $\Re (\zeta ) \geq R$, for every $\zeta \in \mathcal{R}_{C_{1}}$.
	\end{enumerate}
\end{rem}

Motivated by Remark~\ref{RemarkTransformation}, for easier computations we work in the $\zeta $-chart. Therefore, we introduce the following conventions. In order to distinguish a germ $f$ and its logarithmic asymptotic expansion, we use notation $\widehat{f}$ for its asymptotic expansion. Furthermore, if an asymptotic expansion $\widehat{f}$ is given in the formal variable $z$ at zero, then we denote it simply by $\widehat{f}$, and if it is given in the formal variable $\zeta $ at infinity, then we denote it by $\widehat{f}(\zeta )$. \\

Let $f$ be a germ on a spiraling domain $\mathcal{N}$ (given in the $z$-chart) around the origin of the Riemann surface of the logarithm, and let
\begin{align*}
	\widehat{f}(\zeta ) & :=\sum _{i=0}^{+\infty }\mathrm{e}^{-\beta _{i}\zeta }Q_{i}(\zeta ),
\end{align*}
be a complex Dulac series given in the $\zeta $-chart, where $\beta _{0}=0$ and $Q_{0}(\zeta )=\alpha \zeta +\beta $, $\alpha \in \mathbb{R}_{>0}$, $\beta \in \mathbb{C}$. We say that complex Dulac series $\widehat{f}$ is the \emph{asymptotic expansion} of the germ $f$, and write $f\sim \widehat{f}$, if for every $\alpha > 0$, there exists $n_{\alpha }\in \mathbb{N}$, such that
\begin{align*}
	\left| f(\zeta )-\sum _{i=0}^{n_{\alpha }}\mathrm{e}^{-\beta _{i}\zeta }Q_{i}(\zeta ) \right| & = o\left( \mathrm{e}^{-\alpha \zeta }\right) ,
\end{align*}
uniformly on $-\log \left( \mathcal{N}\right) $ as $\Re (\zeta ) \to +\infty $. \\

We say that $f$ is a \emph{complex Dulac germ} if $f$ has an representative that is analytic on a standard quadratic domain $\mathcal{R}_{C}$, $C>0$, and there exists a complex Dulac series $\widehat{f} \in \mathcal{D}(\mathbb{C})$, such that $f\sim \widehat{f}$, uniformly on $\mathcal{R}_{C}$ as $\Re (\zeta ) \to +\infty $ (see \cite[Subsection 2.1]{PRRSDulac21}).

If additionally $f$ preserves the positive part of the real line, then we call $f$ the \emph{real Dulac germ} (or \emph{almost regular germ}, see \cite[Definition 24.27]{IlyYak08}). In this case, note that the asymptotic expansion $\widehat{f}$ of $f$ necessarily has real coefficients. \\

Recall from the introduction that Ilyashenko proved that every Dulac germ is uniquely determined by its Dulac asymptotic expansion, see \cite{Ily91}, \cite[Section 24]{IlyYak08}. This property of Dulac germs is called \emph{quasi-analyticity}. This is proved using the fact that standard quadratic domains are biholomorphic images of $\mathbb{C}^{+}$ and using the \emph{Phragmen-Lindel\"of Theorem} (a maximum modulus principle on an unbounded complex domain) (see e.g. \cite[Theorem 24.36]{IlyYak08}). It can be seen that the \emph{quasi-analyticity} property also holds for complex Dulac germs on standard quadratic domains.

\medskip

\section{Main results}\label{SectionMainResults}

Now we state two main theorems: Theorem~A and Theorem~C. Their proofs are in Section~\ref{sec:proofThmA} and Section~\ref{sec:proofThmB}, respectively.

\medskip 

\subsection{The formal part (Theorem~A)}

\begin{thmA}[Normalization of a strongly hyperbolic logarithmic transseries]\label{ThmA}
	Let $f\in \mathfrak L^{H}$, $f=z^{\alpha }+\mathrm{h.o.t.}$, $\alpha \in \mathbb{R}_{>0}$, $\alpha \neq 1$, be a strongly hyperbolic logarithmic transseries. Then:
	\begin{enumerate}[1., font=\textup, topsep=0.4cm, itemsep=0.4cm, leftmargin=0.6cm]
		\item There exists a unique solution $\varphi  \in \mathfrak L ^{0}$ of the \emph{normalization equation}:
		\begin{align}
			\varphi  \circ f \circ \varphi ^{-1} = z^{\alpha }. \label{EqNorm}
		\end{align}
		Moreover, $\mathrm{ord}_{z} \, (\varphi - \mathrm{id}) \geq \mathrm{ord}_{z} \, (f-z^{\alpha }) - \alpha +1$. Additionally, if $f\in \mathcal{L}_{k}^{H}$, then $\varphi \in \mathcal{L}_{k}^{0}$.
		\item Let $\alpha >1$. For every initial condition $h\in \mathfrak L^{0}$, the \emph{B\"ottcher sequence}
		\begin{align}
			\left( z^{\frac{1}{\alpha ^{n}}} \circ h \circ f^{\circ n}\right) _{n} \label{BottcherSeq}
		\end{align}
		converges to the normalization $\varphi $ in the weak topology on $\mathfrak L^{0}$, as $n$ tends to $+\infty $.
		
		Moreover, the sequence \eqref{BottcherSeq} converges in the power-metric topology on $\mathfrak L^{0}$ if and only if the initial condition $h$ is such that $\mathrm{Lb}_{z}\, (h)=\mathrm{Lb}_{z} \, (\varphi )$.
		\item Let $f\in \mathcal{L}_{k}$, $k\in \mathbb{N}$. The support $\mathrm{Supp} \, (\varphi )$ is contained in the semigroup generated by $(\alpha ^{p},\mathbf{0}_{k})$, $p\in \mathbb{N}$, $(0,1,0,\ldots ,0)_{k+1}$, $\ldots \, $, $(0,0,\ldots ,0,1)_{k+1}$, and $(\alpha ^{m}(\gamma - \alpha ),\mathbf{n})$, for $(\gamma , \mathbf{n}) \in \mathrm{Supp} \, (f-z^{\alpha})$, $m\in \mathbb{N}$.
	\end{enumerate}
\end{thmA}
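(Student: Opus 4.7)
The plan is to convert \eqref{EqNorm} into a fixed-point problem and then apply Proposition~\ref{PropFixedPoint}. Rewriting $\varphi\circ f\circ\varphi^{-1}=z^{\alpha}$ as $\varphi\circ f=\varphi^{\alpha}$ shows that \eqref{EqNorm} is equivalent to $\varphi=\mathcal{P}_f(\varphi)$ for the B\"ottcher operator
\[
\mathcal{P}_f(h):=(h\circ f)^{1/\alpha}=z^{1/\alpha}\circ h\circ f.
\]
A direct induction gives $\mathcal{P}_f^{\circ n}(h)=z^{1/\alpha^{n}}\circ h\circ f^{\circ n}$, so the Picard iterates of $\mathcal{P}_f$ are exactly the B\"ottcher sequence \eqref{BottcherSeq}. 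Since right-composition is $\mathbb R$-linear on $\mathfrak L$ and $f$ has leading term $z^{\alpha}$, one checks that $\mathcal{P}_f$ maps $\mathcal{L}_{k}^{0}$ into itself.

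\textbf{Contraction, existence, and uniqueness for $\alpha>1$.} The key estimate is: for $h_1,h_2\in\mathfrak L^{0}$ with $\mathrm{ord}_z(h_2-h_1)=\gamma>1$,
\[
\mathrm{ord}_z\bigl(\mathcal{P}_f(h_2)-\mathcal{P}_f(h_1)\bigr)=1+\alpha(\gamma-1),
\]
obtained by writing the difference as $(h_1\circ f)^{1/\alpha}\bigl[\bigl(1+(h_2-h_1)\circ f/(h_1\circ f)\bigr)^{1/\alpha}-1\bigr]$ and expanding binomially via \eqref{CompEq1}. Thus the excess $\mathrm{ord}_z(\,\cdot\,)-1$ is multiplied by $\alpha>1$ under each application of $\mathcal{P}_f$, so any Picard orbit is Cauchy in the power-metric $d_z$; Proposition~\ref{PropFixedPoint} then yields a unique limit $\varphi\in\mathfrak L^{0}$. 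Uniqueness in all of $\mathfrak L^{0}$ is automatic: two fixed points would satisfy both $\mathrm{ord}_z(\varphi_1-\varphi_2)=\gamma$ and $\mathrm{ord}_z(\varphi_1-\varphi_2)=1+\alpha(\gamma-1)>\gamma$, a contradiction. Starting the iteration at $h=\mathrm{id}$, a binomial expansion of $f^{1/\alpha}=\mathcal{P}_f(\mathrm{id})$ gives $\mathrm{ord}_z(f^{1/\alpha}-z)=\mathrm{ord}_z(f-z^{\alpha})-\alpha+1$, and since the telescoping sum $\varphi-\mathrm{id}=\sum_n(\mathcal{P}_f^{\circ(n+1)}(\mathrm{id})-\mathcal{P}_f^{\circ n}(\mathrm{id}))$ has increasing orders, the bound $\mathrm{ord}_z(\varphi-\mathrm{id})\geq\mathrm{ord}_z(f-z^{\alpha})-\alpha+1$ follows. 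The depth preservation $f\in\mathcal{L}_{k}^{H}\Rightarrow\varphi\in\mathcal{L}_{k}^{0}$ holds because only the logarithms $\boldsymbol{\ell}_1,\ldots,\boldsymbol{\ell}_k$ already present in $f$ can be produced by \eqref{CompEq1}--\eqref{CompEq2}. The case $\alpha<1$ reduces to $\alpha>1$ by inverting \eqref{EqNorm} to $\varphi\circ f^{-1}\circ\varphi^{-1}=z^{1/\alpha}$ and applying the already-proved case to $f^{-1}\in\mathcal{L}_{k}^{H}$.

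\textbf{Parts~(2) and (3).} For part~(2), the weak topology $\mathcal{T}_{w}$ is coarser than $\mathcal{T}_{d_{z}}$, so the contraction estimate already forces each fixed coefficient $[\mathcal{P}_f^{\circ n}(h)]_{w}$ to stabilize after finitely many iterations, giving weak convergence of \eqref{BottcherSeq} for every parabolic initial condition $h$. Power-metric convergence, by contrast, requires $\mathrm{ord}_z(h-\varphi)>1$; since both $h$ and $\varphi$ are parabolic of order $1$, this is equivalent to $\mathrm{Lb}_z(h)=\mathrm{Lb}_z(\varphi)$. For part~(3), the support of $\mathcal{P}_f^{\circ n}(\mathrm{id})$ is tracked through \eqref{CompEq1}--\eqref{CompEq2}: right-composition with $f$ introduces $\mathbb N$-sums of $(\alpha^{m}(\gamma-\alpha),\mathbf n)$ for $(\gamma,\mathbf n)\in\mathrm{Supp}(f-z^{\alpha})$ (with $m$ recording the iteration step, so the $\gamma-\alpha$ is rescaled by $\alpha^{m}$ at stage $m$), while the outer powers $z^{1/\alpha^{n}}$ contribute the generators $(\alpha^{p},\mathbf 0_k)$ and the pure-logarithm slots. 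Closure under iteration and under the weak limit confines $\mathrm{Supp}(\varphi)$ to the claimed semigroup.

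\textbf{Main obstacle.} The fixed point itself drops out cleanly from the contraction estimate once Proposition~\ref{PropFixedPoint} is available. The technical hurdles I anticipate are (i) verifying that the supports of the Picard iterates have a common well-ordered union so that the weak limit genuinely lies in $\mathfrak L^{0}$ (this was noted in Subsection~\ref{SubsectionMetricTopology} as the extra hypothesis needed to conclude from product-topology convergence), and (ii) the combinatorial bookkeeping for part~(3), since the Taylor expansion \eqref{CompEq2} mixes the support of $h$ with arbitrary $\mathbb N$-powers of $f-z^{\alpha}$, and one must show that no exponents outside the stated semigroup can appear.
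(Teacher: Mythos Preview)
Your contraction estimate $\mathrm{ord}_z(\mathcal{P}_f(h_2)-\mathcal{P}_f(h_1))=1+\alpha(\gamma-1)$ is correct, but it only gives a genuine contraction when $\gamma>1$; when $\gamma=1$ the ``excess'' $\gamma-1$ is zero and stays zero under iteration. This is exactly what happens when $\mathrm{ord}_z(f-z^{\alpha})=\alpha$, i.e.\ $f=z^{\alpha}+z^{\alpha}R_{\alpha}+\mathrm{h.o.b.}(z)$ with $R_{\alpha}\neq 0$: then $\mathcal{P}_f(\mathrm{id})=z(1+R_{\alpha}+\cdots)^{1/\alpha}=z+\tfrac{1}{\alpha}zR_{\alpha}+\cdots$ has $\mathrm{ord}_z(\mathcal{P}_f(\mathrm{id})-\mathrm{id})=1$, and every subsequent Picard increment also has $z$-order~$1$. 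The sequence is not $d_z$-Cauchy, so your existence argument breaks down in this case. (The paper states this precisely in Proposition~\ref{PropContractibility}: $\mathcal{P}_f$ is a contraction on $\mathrm{id}+\mathcal{L}_k^{\beta}$ iff $\beta>1$, and no such invariant $\beta>1$ exists unless $\mathrm{ord}_z(f-z^{\alpha})>\alpha$.) The same gap undermines your Part~(2): the contraction estimate says nothing about individual coefficients inside the $1$-block, and those can change at every step; weak convergence of the leading block does \emph{not} follow from ``$\mathcal{T}_w\subseteq\mathcal{T}_{d_z}$'' but needs a separate computation (Lemma~\ref{PropConvergenceFirst} in the paper, which bounds $\bigl|\binom{1/\alpha^{n}}{i}\bigr|\le 1/\alpha^{n}$ and shows the block coefficients are polynomial in $1/\alpha^{n}$ and $n\log\alpha$, hence tend to~$0$).

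The paper's remedy is a two-step scheme. First a \emph{prenormalization}: find a canonical $\varphi_1=\mathrm{id}+zS$ with $S\in\mathcal{B}_{\ge 1}^{+}$ so that $\varphi_1\circ f\circ\varphi_1^{-1}=z^{\alpha}+\mathrm{h.o.b.}(z)$ has trivial $\alpha$-block. This is itself a fixed-point problem on the block space, and the restricted B\"ottcher operator $\mathcal{R}_f$ is \emph{not} a contraction in any of the natural metrics (the paper gives explicit counterexamples); instead one rewrites the equation as $\mathcal{T}_f(S)=\mathcal{S}_f(S)$ with $\mathcal{T}_f$ an isometric surjection and $\mathcal{S}_f$ a $\tfrac12$-contraction on $(\mathcal{B}_{\ge 1}^{+},d_1)$, and invokes Proposition~\ref{PropFixedPoint}. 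After prenormalization your contraction argument runs exactly as you wrote it and yields $\varphi_2$; then $\varphi=\varphi_2\circ\varphi_1$. For uniqueness in all of $\mathfrak L^{0}$ one decomposes any other solution $\psi$ into its leading block and higher part and matches each piece to $\varphi_1,\varphi_2$ via the respective uniqueness statements; your direct uniqueness argument also fails at $\gamma=1$ for the same reason.
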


\begin{rem}\hfill
	\begin{enumerate}[1., font=\textup, topsep=0.4cm, itemsep=0.4cm, leftmargin=0.6cm]
		\item Note that Theorem~A is stated only for strongly hyperbolic logarithmic transseries $f$ of a form $f=z^{\alpha }+\mathrm{h.o.t.}$, $\alpha \in \mathbb{R}_{>0}\setminus \left\lbrace 1\right\rbrace $. This can be nevertheless supposed without loss of generality. Indeed, suppose that $f=\lambda z^{\alpha }+\mathrm{h.o.t.}$, $\alpha , \lambda >0$, $\alpha , \lambda \neq 1$, is strongly hyperbolic logarithmic transseries. Put $\psi :=\lambda ^{\frac{1}{\alpha -1}} \cdot z$. Note that $\psi \circ f\circ \psi ^{-1}=z^{\alpha }+\mathrm{h.o.t.}$, which transforms $f$ in the form stated in Theorem~A.
		\item Let $f=z^{\alpha }+\mathrm{h.o.t.}$, for $0<\alpha <1$. Let $\varphi $ be a parabolic logarithmic transseries. Note that $\varphi \circ f\circ \varphi ^{-1}=z^{\alpha }$ if and only if $\varphi \circ f^{-1}\circ \varphi ^{-1}=z^{\frac{1}{\alpha }}$. Therefore, without loss of generality we can assume that $\alpha >1$.
		\item Theorem~A can be viewed as an analogue of \cite[Main Theorem]{PRRSFormal21}, but for strongly hyperbolic, instead of hyperbolic logarithmic transseries. Let us point out some of the key differences between two theorems. Firstly, the statement 2 of Theorem~A is much stronger result than the statement 2 of \cite[Main Theorem]{PRRSFormal21} since the B\"ottcher sequence converges in the weak topology for any initial condition, which is not the case with hyperbolic logarithmic transseries and the generalized Koenigs sequence in \cite[Main Theorem]{PRRSFormal21}. This allows us to be able to control the supports of normalizations much easier.
		\item We call the unique solution $\varphi $ of Equation \eqref{EqNorm} the \emph{normalization of the strongly hyperbolic transseries} $f$.
		\item Note that Theorem~A is constructive. In particular, statement 2 gives an explicit algorithm for obtaining the normalization $\varphi $. For every $w$ in the union of supports of $z^{\frac{1}{\alpha ^{n}}} \circ h \circ f^{\circ n}$, $n\in \mathbb{N}$, by iterating $\left( \left[ z^{\frac{1}{\alpha ^{n}}} \circ h \circ f^{\circ n}\right] _{w}\right) _{n}$, as $n\to + \infty $, we get $\left[ \varphi \right] _{w}$.
		
		Furthermore, if the initial condition $h\in \mathfrak L^{0}$ is chosen such that $\mathrm{Lb}_{z}\, (h)=\mathrm{Lb}_{z} \, (\varphi )$, then, by the definition of the power-metric topology, every block of $\varphi $ is revealed after finitely many iterations of $\left( z^{\frac{1}{\alpha ^{n}}} \circ h \circ f^{\circ n}\right) _{n} $. 
		\item By statement 3 of Theorem~A, $\mathrm{Supp}\, (\varphi )$ depends only on $\mathrm{Supp}\, (f)$. Therefore, $\mathrm{Supp}\, (\varphi )$ is independent of the support of the initial condition $h \in \mathfrak L^{0}$.
	\end{enumerate} 
\end{rem}

\begin{rem}\label{RemarkTheoremAGeneraliz}\hfill
	\begin{enumerate}[1., font=\textup, topsep=0.4cm, itemsep=0.4cm, leftmargin=0.6cm]
		\item In the proof of Theorem~A, we only use algebraic properties of the field of real numbers that also hold in the field of complex numbers. It is easy to see that Theorem~A also holds in the differential algebra $\mathfrak L(\mathbb{C})$ of logarithmic transseries with complex coefficients. 
		\item By the formal change of variables $\zeta =-\log z$, Equation \eqref{EqNorm} becomes \emph{Schr\"oder's equation} \eqref{SchrodEq} with the variable $\zeta $ at infinity:
		\begin{align}
			& \varphi (f(\zeta )) = \alpha \cdot \varphi (\zeta ) . \label{SchrodEq}
		\end{align} 
	\end{enumerate}
\end{rem}

\medskip

\subsection{The analytic part (Theorem~C)}

We state and prove Theorem~C in the $\zeta $-chart. The $\zeta $-chart is a global chart for $\widetilde{\mathbb{C}}$ and the definition of a standard quadratic domain is given explicitely in the $\zeta $-chart as a subset of $\mathbb{C}^{+}$.

\begin{thmC}[Analytic normalization of a strongly hyperbolic complex Dulac germ]\label{ThmB}
	Let $f$ be a strongly hyperbolic complex Dulac germ and let $\widehat{f}(\zeta )=\alpha \zeta + o(1)$, $\alpha \in \mathbb{R}_{>1}$, be its asymptotic expansion in the $\zeta $-chart. Then:
	\begin{enumerate}[1., font=\textup, topsep=0.4cm, itemsep=0.4cm, leftmargin=0.6cm]
		\item There exists the unique parabolic complex Dulac germ $\varphi$ (given in the $\zeta $-chart) which is a solution of the \emph{normalization equation}:
		\begin{align}
			\varphi \circ f & =\alpha \cdot \varphi . \label{EqAnalyticNorm}
		\end{align}
		Furthermore, if $f$ is a real Dulac germ, so is $\varphi $.
		\item $\varphi \sim \widehat{\varphi }(\zeta )$, uniformly as $\Re  (\zeta ) \to +\infty $, where $\widehat{\varphi}(\zeta )$ is the unique solution of normalization equation \eqref{EqNorm} in Theorem~A (in the formal variable $\zeta $ at infinity).
	\end{enumerate}
\end{thmC}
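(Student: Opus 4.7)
\bigskip

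\noindent\emph{Proof proposal for Theorem~C.} The plan is to combine the formal normalization from Theorem~A with the analytic linearization from Theorem~B via a \emph{Dulac pre-normalization}. Working in the $\zeta$-chart, I start by applying Theorem~A (extended to complex coefficients, see Remark~\ref{RemarkTheoremAGeneraliz}) to the Dulac asymptotic expansion $\widehat{f}(\zeta)$. This yields a unique formal $\widehat{\varphi}\in \mathfrak{L}^{0}(\mathbb{C})$ with $\widehat{\varphi}\circ \widehat{f}=\alpha\cdot\widehat{\varphi}$; the first task is to verify, using statement~3 of Theorem~A, that $\widehat{\varphi}$ is in fact a complex Dulac series, i.e.\ has support of the form $\{(\alpha_{n},m):n\in\mathbb{N},\,m\in\mathbb{Z}\}$ with a polynomial coefficient in $\boldsymbol{\ell}_{1}^{-1}$ on each exponent. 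This is where the depth-one structure of $\widehat{f}$, the description of the semigroup generated by $\mathrm{Supp}(\widehat{f})$, and the fact that $\alpha>1$ (so no new logarithmic levels are created under formal composition) will be used.

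\smallskip

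Next, I fix a large $N$ and truncate: let $\widehat{\varphi}_{N}(\zeta):=\zeta+\sum_{i:\delta_{i}\le N}\mathrm{e}^{-\delta_{i}\zeta}\,\widetilde{Q}_{i}(\zeta)$ be the finite partial sum of the Dulac expansion of $\widehat{\varphi}$. Since $\widehat{\varphi}_{N}$ is a \emph{finite} sum of Dulac monomials, it defines a genuine holomorphic map on $\widetilde{\mathbb{C}}$ which, on a standard quadratic domain $\mathcal{R}_{C_{1}}$ with $C_{1}$ sufficiently large, is a biholomorphism onto its image and satisfies $\widehat{\varphi}_{N}(\zeta)=\zeta+o(1)$. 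Consider the conjugate
\begin{equation*}
g(\zeta):=\widehat{\varphi}_{N}\circ f\circ \widehat{\varphi}_{N}^{-1},
\end{equation*}
analytic on a (possibly smaller) standard quadratic domain $\mathcal{R}_{C_{2}}$, by statement~2 of Remark~\ref{RemarkStandardQ}. The formal asymptotic expansion of $g$ is $\widehat{\varphi}_{N}\circ \widehat{f}\circ \widehat{\varphi}_{N}^{-1}$. Subtracting $\alpha\zeta=\widehat{\varphi}\circ\widehat{f}\circ\widehat{\varphi}^{-1}$ and using $\widehat{\varphi}=\widehat{\varphi}_{N}+\widehat{R}_{N}$ with $\widehat{R}_{N}$ supported by exponentials of order $>N$, the Dulac expansion of $g-\alpha\zeta$ starts at order $\mathrm{e}^{-\delta\zeta}$ for some $\delta=\delta(N)>0$ which can be made arbitrarily large by choosing $N$ large. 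Since $\mathrm{e}^{-\delta\zeta}$ decays super-polynomially on $\mathbb{C}^{+}$, we have in particular $g(\zeta)=\alpha\zeta+o\bigl((\log^{\circ k}\zeta)^{-\varepsilon}\bigr)$ for the $k$ and $\varepsilon$ needed, uniformly on an admissible domain $D\subseteq\mathcal{R}_{C_{2}}\subseteq\mathbb{C}^{+}$ of type $(\alpha,\varepsilon,k)$.

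\smallskip

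Theorem~B now applies to $g$ on $D$ and produces a unique analytic $\psi(\zeta)=\zeta+o(1)$ with $\psi\circ g=\alpha\psi$. The candidate normalization is then
\begin{equation*}
\varphi:=\psi\circ \widehat{\varphi}_{N},
\end{equation*}
which is analytic on a standard quadratic domain $\mathcal{R}_{C_{3}}\subseteq\widehat{\varphi}_{N}^{-1}(D)$ and satisfies $\varphi\circ f=(\psi\circ g)\circ \widehat{\varphi}_{N}=\alpha\cdot \varphi$ by construction. To promote $\varphi$ to a complex Dulac germ, I must show $\varphi\sim \widehat{\varphi}$. For this, I first argue that $\psi$ inherits a Dulac-type asymptotic expansion $\widehat{\psi}$ from $g$: writing $\widehat{\psi}:=\widehat{\varphi}\circ \widehat{\varphi}_{N}^{-1}$ (which is again a parabolic Dulac series), $\widehat{\psi}$ is the unique formal linearization of $\widehat{g}$; then, using the Koenigs iteration $\frac{1}{\alpha^{n}}g^{\circ n}$ that Theorem~B uses to build $\psi$ together with $g\sim\widehat{g}$ and the domain estimates for iterates on admissible domains, one obtains $\psi\sim\widehat{\psi}$ uniformly on an admissible subdomain. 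Composing with the Dulac asymptotics of $\widehat{\varphi}_{N}$ gives $\varphi\sim \widehat{\psi}\circ \widehat{\varphi}_{N}=\widehat{\varphi}$ on $\mathcal{R}_{C_{3}}$, hence $\varphi$ is a parabolic complex Dulac germ.

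\smallskip

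Finally, uniqueness follows from quasi-analyticity of complex Dulac germs on standard quadratic domains together with the uniqueness of $\widehat{\varphi}$ in Theorem~A: two parabolic complex Dulac solutions of \eqref{EqAnalyticNorm} would share the asymptotic expansion $\widehat{\varphi}$ and therefore coincide. If $f$ is a real Dulac germ, then $\widehat{f}$ has real coefficients, so $\widehat{\varphi}$ and hence $\widehat{\varphi}_{N}$ are real on the positive real axis; then $g$ is real there and Theorem~B (applied to the real restriction, using its uniqueness) yields $\psi$ real, so $\varphi$ is real. I expect the genuine technical obstacle to be the passage from the linearization statement of Theorem~B, which only gives $\psi=\zeta+o(1)$, to the full \emph{Dulac asymptotic expansion} $\psi\sim\widehat{\psi}$ needed for the Dulac conclusion; this will require either revisiting the convergence estimates for the Koenigs iterates of $g$ at each order of the Dulac scale, or an independent quasi-analyticity argument applied to the difference $\psi-\widehat{\psi}_{M}$ for every truncation $\widehat{\psi}_{M}$ of $\widehat{\psi}$.
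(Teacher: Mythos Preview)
Your proposal and the paper share the same three-part structure (formal, analytic, asymptotics), but the execution diverges in two places, and the detour you take does not actually close the gap you correctly identify at the end.

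\textbf{On the conjugation by $\widehat{\varphi}_N$.} The paper does \emph{not} pre-conjugate: it applies Theorem~B directly to $f$ (standard quadratic domains are admissible of every type, Example~\ref{ExampleQuadraticDomainStrong}), obtaining the analytic $\varphi$ as the uniform limit of the Koenigs sequence $\tfrac{1}{\alpha^{n}}f^{\circ n}$ on some $(\mathcal R_C)_R$. Your conjugation by a single truncation $\widehat{\varphi}_N$ buys nothing: after it you still need the \emph{full} Dulac expansion of $\psi$, which is the same problem as the full Dulac expansion of $\varphi$. It also creates extra work (controlling $\widehat{\varphi}_N^{-1}$ and the image domain).

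\textbf{On the asymptotic expansion.} This is the heart of the matter, and the paper resolves it by a Schr\"oder-type \emph{homological equation}, precisely the ``difference'' argument you gesture at in your last sentence. For each $n$, set $g_n:=-(\varphi_n\circ f-\alpha\varphi_n)$ with $\varphi_n$ the $n$-th analytic partial sum of $\widehat{\varphi}$. A short formal-plus-Taylor computation (Lemma~\ref{LemmaDulacExpan}) gives $g_n(\zeta)=o(\mathrm e^{-(\beta_n+\varepsilon_n)\zeta})$. Since $\varphi\circ f=\alpha\varphi$, the difference $E_n:=\varphi-\varphi_n$ satisfies $E_n\circ f-\alpha E_n=g_n$. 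The paper then shows (Lemma~\ref{LemmaHomologicalEq}) that any $o(1)$ solution of this equation on an $f$-invariant subdomain must equal the explicit series $-\sum_{m\ge 0}\alpha^{-(m+1)}g_n\circ f^{\circ m}$, which is $O(\mathrm e^{-(\beta_n+\varepsilon_n)\zeta})$; hence $\varphi-\varphi_n=o(\mathrm e^{-\beta_n\zeta})$ for every $n$, and $\varphi\sim\widehat{\varphi}$. This is the missing ingredient in your plan; note that Theorem~B's estimate $\psi=\zeta+o(\boldsymbol L_k^{-\nu})$ alone cannot substitute for it, because that bound is tied to the logarithmic gauge $M_{\varepsilon,k}$ and does not directly upgrade to exponential decay.

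\textbf{Minor points.} For showing that $\widehat{\varphi}$ is a complex Dulac series, statement~3 of Theorem~A (support control) is not the cleanest route: one still has to argue finiteness of the $z$-support below each level and polynomiality of each block. The paper instead uses Proposition~\ref{PropNormalDulac}: since $\widehat f$ is Dulac, $\mathrm{ord}_z(\widehat f-z^{\alpha})>\alpha$, so the B\"ottcher sequence $\mathcal P_{\widehat f}^{\circ n}(\mathrm{id})$ converges in the \emph{power-metric}; each iterate is Dulac (Dulac series form a group), hence so is the limit. Finally, uniqueness in the paper comes from statement~4 of Theorem~B rather than from quasi-analyticity, and the real case from statement~2 of Theorem~B; your arguments for these two points are fine alternatives.
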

\begin{rem}\hfill
	\begin{enumerate}[1., font=\textup, topsep=0.4cm, itemsep=0.4cm, leftmargin=0.6cm]
		\item Theorem~C can be viewed as an analogue of \cite[Theorem~B]{PRRSDulac21} for strongly hyperbolic complex Dulac germs. The main difference between the two theorems is the fact that in the proof of Theorem~C we solve suitable \emph{Schr\"oder's type equations}, as opposed to \emph{Abel's type equations} solved in \cite{PRRSDulac21}.
		\item The unique solution $\varphi $ of analytic normalization equation \eqref{EqAnalyticNorm}, will be given, in the $\zeta $-chart, as the uniform limit of the \emph{Koenigs sequence} $(\frac{1}{\alpha ^{n}}f^{\circ n}(\zeta ))_{n}$ (see e.g. \cite{CarlesonG93}, \cite{Milnor06}). That is, in the standard $z$-chart,
		\begin{align*}
			\varphi (z) & =\lim _{n\to \infty }\left( z^{\frac{1}{\alpha ^{n}}}\circ f^{\circ n}\right) (z) .
		\end{align*}
	\end{enumerate}
\end{rem}

\medskip

\section{Proof of Theorem~A}\label{sec:proofThmA}

\subsection[Proof of statement 1 of Theorem~A]{Proof of statement 1 of Theorem~A (Existence and uniqueness of the normalization)}\label{SubsectionProofOf1}

As opposed to transfinite compositions of elementary changes of variables used in \cite{mrrz1} in the differential algebra $\mathcal{L}_{1}$, we use the fixed point techniques, as in \cite{PRRSFormal21}, to solve normalization equation \eqref{EqNorm}. More precisely, we define the so-called \emph{B\"ottcher operator} on the group $\mathfrak L^{0}$ and we transform normalization equation \eqref{EqNorm} into the fixed point equation (see Proposition~\ref{PropNormFixed} in Subsection~\ref{SubsubsectionTransformNormToFixed}).

The B\"ottcher operator is not itself a contraction in the power-metric on the groups $\mathcal{L}_{k}^{0}$, $k\in \mathbb{N}_{\geq 1}$ (see Proposition~\ref{PropContractibility}). Therefore, we first \emph{prenormalize} a given strongly hyperbolic logarithmic transseries in Subsection~\ref{SubsubsectionSolvingPrenom}. Then, in Subsection~\ref{SubsubsectionProofStatement1}, we restrict the domain of the B\"ottcher operator so that it becomes a contradiction. \\

In the sequel we assume that $f=z^{\alpha }+\mathrm{h.o.t.}$, for $\alpha >1$. Otherwise, we consider the inverse $f^{-1}=z^{\frac{1}{\alpha }}+\mathrm{h.o.t.}$

\smallskip

\subsubsection{Transformation of a normalization equation into a fixed point equation}\label{SubsubsectionTransformNormToFixed}

As in \cite{PRRSFormal21}, in the next proposition, we relate a solution of normalization equation $\varphi  \circ f \circ \varphi ^{-1} = z^{\alpha }$ to finding a fixed point of an operator.
\begin{prop}\label{PropNormFixed}
	Let $f\in \mathfrak L^{H}$ be such that $f=z^{\alpha } + \mathrm{h.o.t.}$, $\alpha \in \mathbb{R}_{>1}$, and let $\mathcal{P}_{f}:\mathfrak L^{0}\to \mathfrak L^{0}$ be the operator defined by
	\begin{align}
		\mathcal{P}_{f}(h) := z^{\frac{1}{\alpha }} \circ h\circ f , \quad h \in \mathfrak L^{0} . \label{DefinitionPf}
	\end{align}
	Then, $\varphi \in \mathfrak L^{0}$ is a solution of the normalization equation \eqref{EqNorm} if and only if $\varphi $ is a fixed point of the operator $\mathcal{P}_{f}$.
\end{prop}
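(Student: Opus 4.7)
The plan is to prove the two directions of the equivalence by direct algebraic manipulation using the group structure of $\mathfrak{L}^0$ under composition and the identity $z^{1/\alpha}\circ z^{\alpha}=z$, with a preliminary well-definedness check for the operator $\mathcal{P}_f$.

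First I would check that $\mathcal{P}_f$ actually sends $\mathfrak{L}^0$ into $\mathfrak{L}^0$. If $h=z+\mathrm{h.o.t.}\in\mathfrak{L}^0$, then substituting $f=z^\alpha+\mathrm{h.o.t.}$ into $h$ (using the term-by-term composition formulas \eqref{CompEq1}, \eqref{CompEq2}) gives $h\circ f = z^\alpha+\mathrm{h.o.t.}$, and then $z^{1/\alpha}\circ(z^\alpha+\mathrm{h.o.t.}) = z+\mathrm{h.o.t.}$, which is parabolic. Moreover, the depth of $\mathcal{P}_f(h)$ is controlled by those of $f$ and $h$, so $\mathcal{P}_f(h)\in\mathfrak{L}^0$. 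This also implicitly needs associativity of composition on $\mathfrak{L}^H$, which holds because $\mathfrak{L}^H$ is a group under composition (recalled in Subsection~\ref{SubsectionGroupsL}).

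For the forward direction, assume $\varphi\circ f\circ\varphi^{-1}=z^\alpha$. Composing on the right with $\varphi$ yields $\varphi\circ f = z^\alpha\circ \varphi = \varphi^\alpha$, and composing on the left with $z^{1/\alpha}$ gives
\begin{equation*}
z^{1/\alpha}\circ\varphi\circ f \;=\; z^{1/\alpha}\circ \varphi^\alpha \;=\; \varphi,
\end{equation*}
which is exactly $\mathcal{P}_f(\varphi)=\varphi$. Here I use $z^{1/\alpha}\circ z^\alpha=z$ (from \eqref{CompEq1}, since $(\lambda,\alpha)=(1,\alpha)$ and the binomial series collapses to the identity) together with associativity. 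For the converse, assume $\mathcal{P}_f(\varphi)=\varphi$, i.e.\ $z^{1/\alpha}\circ\varphi\circ f=\varphi$. Composing on the left with $z^\alpha$ gives $\varphi\circ f=z^\alpha\circ\varphi$. Since $\varphi\in\mathfrak{L}^0\subseteq\mathfrak{L}^H$ is parabolic, it is invertible in the group $\mathfrak{L}^H$, so composing on the right with $\varphi^{-1}$ yields $\varphi\circ f\circ\varphi^{-1}=z^\alpha$.

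The only non-routine point is ensuring that each composition above is well-defined in $\mathfrak{L}^H$ so that the manipulations make sense; this is guaranteed by the group property recalled from \cite{PRRSFormal21}, together with the observation that all transseries appearing ($\varphi$, $f$, $\varphi\circ f$, $\varphi^\alpha$) lie in $\mathfrak{L}^H$ with strictly positive leading exponent. I do not expect any real obstacle — the statement is a purely formal equivalence that mirrors the classical Böttcher-type rearrangement in the analytic setting.
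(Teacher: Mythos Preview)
Your proof is correct and follows essentially the same approach as the paper, which simply states that the equivalence follows directly from the definition of $\mathcal{P}_f$ and the normalization equation \eqref{EqNorm}. You have merely spelled out in detail the routine algebraic manipulations (using the group structure of $\mathfrak{L}^H$ and $z^{1/\alpha}\circ z^{\alpha}=\mathrm{id}$) that the paper leaves implicit.
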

\begin{proof}
	Directly from \eqref{DefinitionPf} and the normalization equation \eqref{EqNorm}.
\end{proof}

\begin{rem}
	Note that operator $\mathcal{P}_{f}$ defined in \eqref{DefinitionPf} is an analogue of the \emph{generalized Koenigs operator} defined in \cite{PRRSFormal21}. We call operator $\mathcal{P}_{f}$ the \emph{B\"ottcher operator} because of the B\"ottcher Theorem (see e.g. \cite{CarlesonG93}, \cite{Milnor06}) which is a strongly hyperbolic analogue of the classical \emph{Koenigs Theorem} (see e.g. \cite{CarlesonG93}, \cite{Milnor06}) in the case of analytic diffeomorphisms.
\end{rem}

\begin{prop}\label{PropNormOrder}
	Let $f\in \mathfrak L^{H}$ be such that $f=z^{\alpha } + \mathrm{h.o.t.}$, for $\alpha \in \mathbb{R}_{>1}$, $\beta :=\mathrm{ord}_{z} \, (f-z^{\alpha })-\alpha +1$, and let $\varphi  \in \mathfrak L^{0}$ be a solution of normalization equation \eqref{EqNorm}. Then $\mathrm{ord}_{z} \, (\varphi -\mathrm{id}) \geq \beta $.
\end{prop}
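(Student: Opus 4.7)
The plan is to substitute $\psi := \varphi - \mathrm{id}$ and $g := f - z^{\alpha }$ in \eqref{EqNorm}, so that the claim becomes $\gamma := \mathrm{ord}_{z}(\psi ) \geq \beta $, where $\mathrm{ord}_{z}(g) = \beta + \alpha - 1$. The case $\beta = 1$ is immediate because parabolicity of $\varphi $ ensures $\gamma \geq 1$, so I would assume $\beta > 1$, equivalently $\mathrm{ord}_{z}(g) > \alpha $. First, I would rewrite \eqref{EqNorm} in the equivalent form $\varphi \circ f = \varphi ^{\alpha }$, expand the left side as $\varphi \circ f = f + \psi \circ f$, and expand the right side via the binomial formula \eqref{CompEq1} as $\varphi ^{\alpha } = z^{\alpha } + \alpha z^{\alpha -1}\psi + \sum _{i\geq 2}\binom{\alpha }{i}z^{\alpha -i}\psi ^{i}$, with convergence in the weak topology guaranteed by \cite[Proposition~3.3]{PRRSFormal21}. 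Cancelling $z^{\alpha }$ gives the pivotal identity
\begin{align}
	g + \psi \circ f = \alpha z^{\alpha -1}\psi + \sum _{i\geq 2}\binom{\alpha }{i}z^{\alpha -i}\psi ^{i}. \label{PlanEqProp}
\end{align}
The strategy is then to assume $\gamma < \beta $ for contradiction and compare leading $z$-blocks on both sides of \eqref{PlanEqProp}.

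In the main case $\gamma > 1$, the Taylor expansion \eqref{CompEq2} of $\psi \circ f$ combined with $\mathrm{ord}_{z}(g) > \alpha $ gives $\mathrm{ord}_{z}(\psi \circ f) = \alpha \gamma $. The inequality $(\alpha - 1)(\gamma - 1) > 0$ yields $\alpha \gamma > \alpha - 1 + \gamma $, and the assumption $\gamma < \beta $ gives $\mathrm{ord}_{z}(g) = \beta + \alpha - 1 > \alpha - 1 + \gamma $, so the left side of \eqref{PlanEqProp} has $z$-order strictly greater than $\alpha - 1 + \gamma $. On the right, $\mathrm{ord}_{z}(z^{\alpha - i}\psi ^{i}) = \alpha + i(\gamma - 1) > \alpha - 1 + \gamma $ for every $i \geq 2$, so the leading $z$-block of the right side equals $\alpha z^{\alpha - 1 + \gamma }R_{\gamma }$, which is nonzero since $z^{\gamma }R_{\gamma }$ is the leading $z$-block of $\psi $. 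This contradicts the equality in \eqref{PlanEqProp}.

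The remaining case $\gamma = 1$ is more delicate since now $\alpha \gamma = \alpha - 1 + \gamma = \alpha $, and every summand on the right of \eqref{PlanEqProp} contributes at $z$-order $\alpha $. Writing $\psi = zR_{1} + \mathrm{h.o.b.}(z)$, I would extract the $z^{\alpha }$-blocks of both sides of \eqref{PlanEqProp}; since $\beta > 1$ means $g$ contributes nothing at this order, this reduces to the block identity
\begin{align*}
	R_{1}\circ z^{\alpha } = (1 + R_{1})^{\alpha } - 1
\end{align*}
in the log-only subring. Parabolicity of $\varphi $ forces the lex-smallest exponent tuple $(n_{1},\ldots ,n_{k})$ in $\mathrm{Supp}(R_{1})$ to satisfy $n_{1} \geq 0$. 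Using $\boldsymbol{\ell }_{1}\circ z^{\alpha } = \alpha ^{-1}\boldsymbol{\ell }_{1}$ and $\boldsymbol{\ell }_{k}\circ z^{\alpha } = \boldsymbol{\ell }_{k} + \mathrm{h.o.t.}$ for $k \geq 2$, comparison of leading coefficients of the two sides yields $\alpha ^{-n_{1}} = \alpha $, hence $n_{1} = -1$, contradicting $n_{1} \geq 0$. The main obstacle is precisely this $\gamma = 1$ subcase, which demands careful bookkeeping of how composition with $z^{\alpha }$ acts on iterated logarithms at the block level; away from that subtlety, the proposition reduces to the elementary inequality $(\alpha - 1)(\gamma - 1) > 0$.
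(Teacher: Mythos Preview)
Your proof is correct and follows essentially the same strategy as the paper: rewrite the normalization equation, assume $\mathrm{ord}_z(\varphi-\mathrm{id})<\beta$, and compare leading terms to force $\mathrm{ord}_z(\varphi-\mathrm{id})=1$, then obtain a contradiction from the scaling $\boldsymbol{\ell}_1\circ z^\alpha=\alpha^{-1}\boldsymbol{\ell}_1$. The only cosmetic differences are that the paper expands $z^{1/\alpha}\circ(\varphi\circ f)=\varphi$ (taking the $1/\alpha$-power on the left) whereas you expand $\varphi\circ f=\varphi^{\alpha}$ (taking the $\alpha$-power on the right), and the paper handles the cases $\gamma>1$ and $\gamma=1$ in one stroke via the leading-term identity $\mathrm{Lt}\big(\varphi_1(z^\alpha)/(\alpha z^{\alpha-1})\big)=\mathrm{Lt}(\varphi_1)$, while you treat them separately; the final contradiction $\alpha^{-n_1}=\alpha$ with $n_1\geq 0$ is identical to the paper's $\frac{1}{\alpha^{n+1}}=1$ with $n\in\mathbb{N}$.
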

\begin{proof}
	If $\beta =1$, then obviously $\mathrm{ord}_{z}\, (\varphi -\mathrm{id})\geq \beta $. Suppose that $\beta >1$. Let $\varphi $ be a solution of the normalization equation \eqref{EqNorm}. Put $\varphi _{1}:=\varphi -\mathrm{id}$ and $f_{1}:=f-z^{\alpha }$. Note that $\mathrm{ord}_{z} \, (f_{1})=\alpha + \beta -1$. From the normalization equation \eqref{EqNorm} and Taylor Theorem we get:
	\begin{align}
		z^{\frac{1}{\alpha }} \circ (z^{\alpha }+f_{1}+\varphi _{1}\circ f) &= \mathrm{id}+\varphi _{1} , \nonumber \\
		z\Big(1+\frac{f_{1}}{z^{\alpha }}+ \frac{\varphi _{1}(z^{\alpha })}{z^{\alpha }}+\sum _{i\geq 1}\frac{\varphi _{1}^{(i)}(z^{\alpha })}{z^{\alpha }i!}f_{1}^{i}\Big) ^{\frac{1}{\alpha }} &= \mathrm{id}+\varphi _{1} . \label{IdentityOrder}
	\end{align}
	Suppose that $\mathrm{ord}_{z} \, \, (\varphi -\mathrm{id}) < \beta $, i.e., $\mathrm{ord}_{z} \, \, (\varphi _{1}) < \beta $. Note that $\mathrm{ord} \, \Big( \frac{f_{1}}{z^{\alpha -1}}\Big) =\beta $. Since $\mathrm{ord}_{z} \, \, (\varphi -\mathrm{id}) < \beta $, by \eqref{IdentityOrder} and the second formula in \eqref{CompEq1}, it follows that
	\begin{align*}
		\mathrm{id}+ \mathrm{Lt} \, \Big( \frac{\varphi _{1}(z^{\alpha })}{\alpha z^{\alpha -1}}\Big) + \mathrm{h.o.t.} &= \mathrm{id}+\varphi _{1} ,
	\end{align*}
	i.e.,
	\begin{align}
		\mathrm{Lt} \, \Big( \frac{\varphi _{1}(z^{\alpha })}{\alpha z^{\alpha -1}}\Big) & = \mathrm{Lt}\left( \varphi _{1}\right) . \label{EquationLeadingTerm}
	\end{align}
	Therefore, the order (in $z$) of $\frac{\varphi _{1}(z^{\alpha })}{\alpha z^{\alpha -1}}$ is equal to the order (in $z$) of $\varphi _{1}$, which implies that $\alpha \cdot \mathrm{ord}_{z}\, (\varphi _{1})-\alpha +1=\mathrm{ord}_{z}\, (\varphi _{1})$, i.e., $\mathrm{ord}_{z}\, (\varphi _{1})=1$. Since $\mathrm{ord}\, (\varphi _{1})>(1,\mathbf{0}_{k})$, by Lemma $A.3.2$ in \cite{Peran21Thesis} and by multiplication by $\frac{1}{\alpha }$, we get $\mathrm{Lt} \, \Big( \frac{\varphi _{1}(z^{\alpha })}{\alpha z^{\alpha -1}}\Big) = \frac{1}{\alpha ^{n+1}}\mathrm{Lt} \, (\varphi _{1})$, for some $n\in \mathbb{N}$, which is a contradiction with \eqref{EquationLeadingTerm}. Therefore, $\mathrm{ord}_{z} \, (\varphi -\mathrm{id}) \geq \beta $.
\end{proof}

Let $\beta \geq 1$ and let $\mathrm{id}+\mathcal{L}_{k}^{\beta }$, $k\in \mathbb{N}$, be the set of all parabolic logarithmic transseries $h \in \mathcal{L}_{k}^{0}$, such that $\mathrm{ord}_{z}(h-\mathrm{id}) \geq \beta $. In \cite[Proposition 6.2]{PRRSFormal21} it is proved that $(\mathcal{L}_{k},d_{z})$ is complete. Thus, $\mathrm{id}+\mathcal{L}_{k}^{\beta }$, $\beta \geq 1$, are complete spaces as its closed subspaces.

\begin{prop}[Contractibility of the operator $\mathcal{P}_{f}$]\label{PropContractibility}
	Let $f\in \mathcal{L}_{m}^{H}$, $m\in \mathbb{N}$, be such that $f=z^{\alpha }+\mathrm{h.o.t.}$, $\alpha \in \mathbb{R}_{>1}$, and let $\mathcal{P}_{f}$ be the B\"ottcher operator defined by \eqref{DefinitionPf}. Then:
	\begin{enumerate}[1., font=\textup, topsep=0.4cm, itemsep=0.4cm, leftmargin=0.6cm]
		\item The space $\mathrm{id}+\mathcal{L}_{k}^{\beta }$ is $\mathcal{P}_{f}$-invariant, for every $1 \leq \beta \leq \mathrm{ord}_{z} \, (f-z^{\alpha })-\alpha +1$ and $k\geq m$.
		\item For every $k\geq m$, the operator $\mathcal{P}_{f}$ is a contraction on the space $(\mathrm{id}+\mathcal{L}_{k}^{\beta },d_{z})$ if and only if $\beta >1$. In that case, the operator $\mathcal{P}_{f}$ is a $\frac{1}{2^{(\alpha -1)(\beta -1)}}$-contraction on the space $(\mathrm{id}+\mathcal{L}_{k}^{\beta },d_{z})$, $k\geq m$.
	\end{enumerate}
\end{prop}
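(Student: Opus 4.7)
The plan is to derive both statements from an explicit order estimate on
\begin{align*}
    \mathcal{P}_f(h) = z\Big(1 + \frac{f_1 + h_1\circ f}{z^\alpha}\Big)^{1/\alpha} = \mathrm{id} + \frac{f_1}{\alpha z^{\alpha-1}} + \frac{h_1\circ f}{\alpha z^{\alpha-1}} + \mathrm{h.o.b.}(z),
\end{align*}
obtained from the composition formulas \eqref{CompEq1}, \eqref{CompEq2} and the generalized binomial series, where $f_1 := f - z^\alpha$ and $h_1 := h - \mathrm{id}$. The crucial observation is that composing with $f = z^\alpha + \mathrm{h.o.t.}$ multiplies the order in $z$ by $\alpha$, so $\mathrm{ord}_z(h_1 \circ f) = \alpha \cdot \mathrm{ord}_z(h_1)$, without cancellation of the leading block.

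For statement~1, the hypothesis $\beta \leq \mathrm{ord}_z(f_1) - \alpha + 1$ gives $\mathrm{ord}_z(f_1/z^{\alpha-1}) \geq \beta$, and $\mathrm{ord}_z(h_1) \geq \beta$ gives $\mathrm{ord}_z((h_1\circ f)/z^{\alpha-1}) \geq \alpha\beta - (\alpha-1) \geq \beta$ (the last inequality from $(\alpha-1)(\beta-1) \geq 0$). The remaining blocks from the binomial expansion are quadratic or higher in the same quantities and so are of strictly larger order in $z$. Hence $\mathcal{P}_f(h) - \mathrm{id} \in \mathcal{L}_k^\beta$, and depth $\leq k$ is preserved under composition for $k \geq m$.

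For the `if' direction of statement~2, let $\delta := \mathrm{ord}_z(h_1 - g_1) \geq \beta$. The $f_1$-contribution cancels in the difference, leaving
\begin{align*}
    \mathcal{P}_f(h) - \mathcal{P}_f(g) = \frac{(h_1 - g_1)\circ f}{\alpha z^{\alpha - 1}} + \mathrm{h.o.b.}(z),
\end{align*}
whose leading block has order in $z$ exactly $\alpha\delta - (\alpha - 1)$. Converting to the power-metric yields $d_z(\mathcal{P}_f(h),\mathcal{P}_f(g)) = 2^{\alpha-1}\, d_z(h,g)^\alpha$, and the bound $d_z(h,g) \leq 2^{-\beta}$ gives the claimed contraction factor $2^{-(\alpha-1)(\beta-1)}$, which is strictly less than $1$ iff $\beta > 1$. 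For the `only if' direction, the sequence $h_n := \mathrm{id} + z^{1+1/n} \in \mathrm{id} + \mathcal{L}_k^1$ paired with $g := \mathrm{id}$ gives $d_z(\mathcal{P}_f(h_n), \mathcal{P}_f(\mathrm{id}))/d_z(h_n, \mathrm{id}) = 2^{-(\alpha-1)/n} \to 1$, ruling out any uniform contraction constant below $1$.

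The main obstacle is verifying that the displayed leading block in each expansion is indeed the true leading block and is not annihilated by higher-order contributions, either from the binomial tail or from the inner composition $h_1 \circ f$. This reduces to the block-by-block composition laws from \cite[Proposition 3.3]{PRRSFormal21} together with the elementary fact that successive powers of $(f_1 + h_1 \circ f)/z^\alpha$ carry strictly increasing orders in $z$; once this bookkeeping is made precise, the estimates above become completely routine.
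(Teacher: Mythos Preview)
Your proof is correct and follows essentially the same approach as the paper: expand $\mathcal{P}_f(h)=z(1+\mathcal{N}(h))^{1/\alpha}$ via the binomial series and bound $\mathrm{ord}_z$ of each contribution. The only cosmetic differences are that the paper Taylor-expands $h_1\circ f$ around $z^{\alpha}$ (writing $h_1(z^\alpha)+\sum_{i\geq1}h_1^{(i)}(z^\alpha)f_1^i/i!$) rather than invoking $\mathrm{ord}_z(h_1\circ f)=\alpha\,\mathrm{ord}_z(h_1)$ directly, and for the ``only if'' direction it exhibits a single pair $h_1,h_2$ with $\mathrm{ord}_z(h_1-h_2)=\beta$ attaining the Lipschitz bound, whereas you use the limiting sequence $h_n=\mathrm{id}+z^{1+1/n}$.
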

\begin{proof}
	1. Let $f_{1}:=f-z^{\alpha }$ and let $h \in \mathrm{id}+\mathcal{L}_{k}^{\beta }$. Put $h_{1}:=h-\mathrm{id}$. Now, by Taylor Theorem we get:
	\begin{align}
		\mathcal{P}_{f}(h) &= z^{\frac{1}{\alpha }} \circ h \circ f \nonumber \\
		&= z^{\frac{1}{\alpha }} \circ (z^{\alpha }+f_{1}+h_{1}\circ f) \nonumber \\
		&= z\Big(1+ \frac{f_{1}}{z^{\alpha }}+ \frac{h_{1}(z^{\alpha })}{z^{\alpha }}+\sum _{i\geq 1}\frac{h_{1}^{(i)}(z^{\alpha })}{z^{\alpha }i!}f_{1}^{i}\Big) ^{\frac{1}{\alpha }} . \label{Equati3}
	\end{align}
	Let
	\begin{align*}
		\mathcal{N}(h) & :=\frac{f_{1}}{z^{\alpha }}+ \frac{h_{1}(z^{\alpha })}{z^{\alpha }}+\sum _{i\geq 1}\frac{h_{1}^{(i)}(z^{\alpha })}{z^{\alpha }i!}f_{1}^{i} .
	\end{align*}
	By the second formula in \eqref{CompEq1}, we get:
	\begin{align}
		\mathcal{P}_{f}(h) &= \mathrm{id}+z\sum _{j\geq 1}{\frac{1}{\alpha } \choose j}(\mathcal{N}(h))^{j} . \label{Eqq11}
	\end{align}
	It is obvious that $\mathcal{P}_{f}(h) \in \mathcal{L}_{k}^{0}$ and since $\mathrm{ord}_{z} \, (h_{1}) \geq \beta $ and $\mathrm{ord}_{z} \, (f_{1}) \geq \alpha + \beta -1$, it follows that:
	\begin{align}
		\mathrm{ord}_{z} \, \Big( \frac{f_{1}}{z^{\alpha -1}}\Big) & \geq \beta , \label{Eqq222}
	\end{align}
	\begin{align}
		\mathrm{ord}_{z} \, \Big( \frac{h_{1}(z^{\alpha })}{z^{\alpha -1}}\Big) &= \alpha \cdot \mathrm{ord}_{z} \, (h_{1}) - \alpha +1 \nonumber \\
		& = \mathrm{ord}_{z} \, (h_{1}) + (\alpha -1)\cdot \mathrm{ord}_{z} \, (h_{1})-(\alpha -1) \nonumber \\
		& \geq \mathrm{ord}_{z} \, (h_{1}) + (\alpha -1)(\beta -1) \label{Eqq22} 
	\end{align}
	and
	\begin{align}
		\mathrm{ord}_{z} \, \Big( \sum _{i\geq 1}\frac{h_{1}^{(i)}(z^{\alpha })}{z^{\alpha -1}i!}f_{1}^{i} \Big) &\geq \alpha \cdot (\mathrm{ord}_{z} \, (h_{1}) -1)-(\alpha -1)+\mathrm{ord}_{z} \, (f_{1}) \nonumber \\
		& \geq \mathrm{ord}_{z} \, (h_{1}) + (\alpha -1)\cdot \mathrm{ord}_{z} \, (h_{1})-(\alpha -1) \nonumber \\
		& \geq \mathrm{ord}_{z} \, (h_{1}) + (\alpha -1)(\beta -1) . \label{Eq33} 
	\end{align}
	Note that, if $\mathrm{ord}_{z}\, (h_{1})=\beta $, equality holds in \eqref{Eqq22}. From \eqref{Eqq222}, \eqref{Eqq22} and \eqref{Eq33}, we get that $\mathrm{ord}_{z} \, (z\mathcal{N}(h)) \geq \beta $, and, therefore, by \eqref{Eqq11},
	\begin{align}
		\mathrm{ord}_{z} \, \big( \mathcal{P}_{f}(h)-\mathrm{id} \big) \geq \beta . \nonumber 
	\end{align}
	This implies that $\mathcal{P}_{f}(\mathrm{id}+\mathcal{L}_{k}^{\beta }) \subseteq \mathrm{id}+\mathcal{L}_{k}^{\beta }$. \\
	
	2. Let $\mathrm{id}+h_{1}, \mathrm{id}+h_{2} \in \mathrm{id}+\mathcal{L}_{k}^{\beta }$. Using \eqref{Eqq11}, \eqref{Eqq22}, \eqref{Eq33}, we get:
	\begin{align}
		\mathrm{ord}_{z} \, \big( \mathcal{P}_{f}(\mathrm{id}+h_{1} \big) -\mathcal{P}_{f}(\mathrm{id}+h_{2})) &= \mathrm{ord}_{z} \, \Big( \frac{1}{\alpha }z\mathcal{N}(h_{1})- \frac{1}{\alpha }z\mathcal{N}(h_{2}) \Big) \nonumber \\
		& \geq \mathrm{ord}_{z} \, (h_{1}-h_{2}) + (\alpha -1)(\beta -1) . \label{Eqq44}
	\end{align}
	From \eqref{Eqq44}, we conclude that $\mathcal{P}_{f}$ is a $\frac{1}{2^{(\alpha -1)(\beta -1)}}$-Lipschitz map. Suppose that $\mathrm{ord}_{z}\, (h_{1}-h_{2})=\beta $. Then, the equality holds in \eqref{Eqq22}. Hence, the equality holds also in \eqref{Eqq44}. Therefore, $\frac{1}{2^{(\alpha -1)(\beta -1)}}$ is the minimal Lipschitz coefficient\footnote{Let $(X,d)$ be a metric space and $\mathcal{S}:X\to X$ a \emph{Lipschitz map}. If there exists $\lambda >0$ minimal such that $d(\mathcal{S}(x_{1}),\mathcal{S}(x_{2})) \leq \lambda \cdot d(x_{1},x_{2})$, for $x_{1},x_{2}\in X$, then we call $\lambda $ the \emph{minimal Lipschitz coefficient} of $\mathcal{S}$.} of $\mathcal{P}_{f}$. Consequently, it follows that $\mathcal{P}_{f}$ is a contraction on the space $\mathrm{id}+\mathcal{L}_{k}^{\beta }$ if and only if $\beta >1$. In that case, $\mathcal{P}_{f}$ is a $\frac{1}{2^{(\alpha -1)(\beta -1)}}$-contraction on the space $\mathrm{id}+\mathcal{L}_{k}^{\beta }$.
\end{proof}

Let $f\in \mathcal{L}_{k}^{H}$, $k\in \mathbb{N}$, be such that $f=z^{\alpha }+\mathrm{h.o.t.}$, $\alpha >1$, and let $\beta := \mathrm{ord}_{z}(f-z^{\alpha })- \alpha +1$. By Proposition~\ref{PropContractibility}, the operator $\mathcal{P}_{f}$ is a contraction if and only if $\beta >1$, i.e., $\mathrm{ord}_{z}(f-z^{\alpha }) > \alpha $.

Let $m\geq k$ be arbitrary. Therefore, if $\mathrm{ord}_{z}(f-z^{\alpha }) > \alpha $, by the Banach Fixed Point Theorem, there exists a unique fixed point $\varphi  \in \mathrm{id}+ \mathcal{L}_{m}^{\beta }$ of the operator $\mathcal{P}_{f}$, which implies, by Proposition~\ref{PropNormFixed}, that $\varphi $ is the unique solution of the normalization equation \eqref{EqNorm} in the space $\mathrm{id}+\mathcal{L}_{m}^{\beta }$, for each $m\geq k$. Now, by Proposition~\ref{PropNormOrder}, it follows that $\varphi $ is unique solution of Equation \eqref{EqNorm} in the space $\mathfrak L^{0}$.

On the other hand, suppose that $\mathrm{ord}_{z}(f-z^{\alpha }) = \alpha $, i.e., $\beta =1$. By Proposition~\ref{PropContractibility}, the operator $\mathcal{P}_{f}$ is not a contraction on the space $(\mathrm{id}+\mathcal{L}_{k}^{1},d_{z})$. Consequently, we split the proof of the existence of a normalization in the following two steps:
\begin{enumerate}[1., font=\textup, topsep=0.4cm, itemsep=0.4cm, leftmargin=0.6cm]
	\item \emph{The prenormalization.} We solve a \emph{prenormalization equation}
	\begin{align}
		\varphi _{1}\circ f\circ \varphi _{1}^{-1} & = z^{\alpha }+ \mathrm{h.o.b.}(z) , \label{EqPreNorm}
	\end{align}
	where $\varphi _{1} \in \mathcal{L}_{k}^{0}$. Furthermore, $\varphi _{1} \in \mathcal{L}_{k}^{0}$ is not unique in $\mathcal{L}_{k}^{0}$ since Equation \eqref{EqPreNorm} depends on higher order blocks. If we additionally impose the \emph{canonical form} of $\varphi _{1}$, i.e., $\varphi _{1}:= \mathrm{id}+zS$, for a $1$-block $zS$, then $\varphi _{1}$ is unique. This is proved in Proposition~\ref{PropSolutionPrenorm} in Subsection~\ref{SubsubsectionSolvingPrenom}.
	\item \emph{The normalization.} Using the contractibility of the B\"ottcher operator $\mathcal{P}_{\varphi _{1}\circ f\circ \varphi _{1}^{-1}}$ we solve the normalization equation
	\begin{align*}
		\varphi _{2}\circ \left( \varphi _{1}\circ f\circ \varphi _{1}^{-1} \right) \circ \varphi _{2}^{-1} & = z^{\alpha } ,
	\end{align*}
	where $\varphi _{2} \in \mathrm{id}+\mathcal{L}_{k}^{\beta }$, for $\beta :=\mathrm{ord}_{z}(\varphi _{1}\circ f\circ \varphi _{1}^{-1} -z^{\alpha })- \alpha +1$. The proof of existence and uniqueness of the normalization is in Subsection~\ref{SubsubsectionProofStatement1}.
\end{enumerate}

\smallskip

\subsubsection{Differential algebras of blocks}\label{SubsubsectionBlocks}

For the purpose of solving a prenormalization equation \eqref{EqPreNorm}, we briefly recall the definitions of spaces of blocks $\mathcal{B}_{m}$, $m\in \mathbb{N} $, introduced in \cite[Subsection 3.4]{PRRSFormal21}.

For a fixed $k\in \mathbb{N}_{\geq 1}$, let $\mathcal{B}_{m}$, $m\in \left\lbrace 1,\ldots ,k\right\rbrace $, be the set of all $R \in \mathcal{L}_{k}^{\infty }$, such that:
\begin{align*}
	R & :=\sum _{(n_{m},\ldots ,n_{k}) \in \mathrm{Supp}\, (R)}a_{n_{m},\ldots ,n_{k}}\boldsymbol{\ell}_{m}^{n_{m}}\cdots \boldsymbol{\ell}_{k}^{n_{k}} .
\end{align*}
If $R\neq 0$, we define the \emph{order of} $R$ \emph{in} $\boldsymbol{\ell }_{m}$ as the minimal exponent of $R$ in $\boldsymbol{\ell }_{m}$ and denote it by $\mathrm{ord}_{\boldsymbol{\ell }_{m}} (R)$. If $R=0$, we simply put $\mathrm{ord}_{\boldsymbol{\ell }_{m}} (R):=+\infty $. Note that $\mathcal{B}_{m}$, $m\in \left\lbrace 1,\ldots ,k\right\rbrace $, is a subalgebra of $\mathcal{L}_{k}^{\infty }$.

We define $d_{m}:\mathcal{B}_{m}\times \mathcal{B}_{m} \to \mathbb{R}$, $m\in \left\lbrace 1,\ldots ,k\right\rbrace $, by:
$$
d_{m}(R,S):=\left\lbrace \begin{array}{ll}
	2^{-\mathrm{ord}_{\boldsymbol{\ell }_{m}}(R-S)}, & R\neq S, \\
	0, & R=S .
\end{array} \right. 
$$
By \cite[Proposition 6.2]{PRRSFormal21} the metric spaces $(\mathcal{B}_{m},d_{m})$, $m\in \left\lbrace 1,\ldots ,k\right\rbrace $, are complete.

We define derivations $D_{m}:\mathcal{B}_{m} \to \mathcal{B}_{m}$, $m\in \left\lbrace 1,\ldots ,k\right\rbrace $, by
\begin{align*}
	D_{m} & :=\boldsymbol{\ell }_{m}^{2}\cdot \frac{d}{d\boldsymbol{\ell }_{m}} .
\end{align*}
It is easy to see that $D_{m}$ satisfies the \emph{Newton-Leibnitz formula}, so $\mathcal{B}_{m}$ is a differential algebra. Note that the derivation $D_{m}$ is a $\frac{1}{2}$-contraction on the space $(\mathcal{B}_{m},d_{m})$. This is the reason why we choose $D_{m}$ over the derivation $\frac{d}{d\boldsymbol{\ell }_{m}}$, $m\in \left\lbrace 1,\ldots ,k\right\rbrace $. \\

Let
\begin{align*}
	\mathcal{B}_{m}^{+} := \left\lbrace R \in \mathcal{B}_{m}\subseteq \mathcal{L}_{k}^{\infty } : \mathrm{ord}_{\boldsymbol{\ell }_{m}}(R) \geq  1\right\rbrace 
\end{align*}
and
\begin{align*}
	\mathcal{B}_{\geq m}^{+} := \left\lbrace R \in \mathcal{B}_{m}\subseteq \mathcal{L}_{k}^{\infty } : \mathrm{ord} \, (R) > \boldsymbol{0}_{k+1}\right\rbrace ,
\end{align*}
for $m\in \left\lbrace 1,\ldots ,k\right\rbrace $ and $k\in \mathbb{N}_{\geq 1}$.

For every $m\in \left\lbrace 1,\ldots ,k\right\rbrace $, note that $\mathcal{B}_{m}^{+}\subseteq \mathcal{B}_{\geq m}^{+}$, and $\mathcal{B}_{m}^{+}$ and $\mathcal{B}_{\geq m}^{+}$ are subalgebras of the differential algebra $\mathcal{B}_{m}$.

It is easy to see that the following direct decomposition holds:
\begin{align*}
	\mathcal{B}_{\geq m}^{+} = \mathcal{B}_{m}^{+} \oplus \cdots \oplus \mathcal{B}_{k}^{+} ,
\end{align*}
for $m\in \left\lbrace 1, \ldots ,k\right\rbrace $.

Finally, note that $\mathcal{B}_{m}$, $m\in \left\lbrace 1, \ldots ,k\right\rbrace $, depends on the chosen $k\in \mathbb{N}_{\geq 1}$, but in the sequel it will be clear from the context which $k$ we consider.

\smallskip

\subsubsection{Step 1 - solving a prenormalization equation}\label{SubsubsectionSolvingPrenom}

Recall that we prove normalization step directly using the contractibility of the B\"ottcher operator from Proposition~\ref{PropContractibility} if $\beta >1$. In case $\beta =1$ we use the same idea for the prenormalization. For that purpose we define the analogue of the \emph{B\"ottcher operator on the set} $\mathrm{id}+z\mathcal{B}_{\geq 1}\subseteq \mathcal{L}_{k}^{0}$.

\begin{lem}\label{LemmaPfPrenorm}
	Let $f\in \mathcal{L}_{k}^{H}$, $k\in \mathbb{N}_{\geq 1}$, be such that $f=z^{\alpha }+z^{\alpha }R_{\alpha }+\mathrm{h.o.b.}(z)$, for $\alpha >1$, $R_{\alpha }\in \mathcal{B}^{+}_{\geq 1}\setminus \left\lbrace 0\right\rbrace \subseteq \mathcal{L}_{k}$. Let $\mathcal{R}_{f}:\mathrm{id}+z\mathcal{B}_{\geq 1}^{+} \to \mathrm{id}+z\mathcal{B}_{\geq 1}^{+}$, be the operator defined by:
	\begin{align}
		\mathcal{R}_{f}(\mathrm{id}+zT) := z^{\frac{1}{\alpha}} \circ (\mathrm{id}+zT) \circ (z^{\alpha }+z^{\alpha }R_{\alpha }) , \quad T\in \mathcal{B}_{\geq 1}^{+} \subseteq \mathcal{L}_{k} . \label{OperatorR}
	\end{align}
	Logarithmic transseries $\varphi _{1}=\mathrm{id}+zS$, $S \in \mathcal{B}_{\geq 1}^{+}\subseteq \mathcal{L}_{k}$, is a solution of a prenormalization equation $\varphi _{1} \circ f\circ \varphi _{1}^{-1}=z^{\alpha }+\mathrm{h.o.b.}(z)$, if and only if $\varphi _{1}$ is a fixed point of the operator $\mathcal{R}_{f}$. 
\end{lem}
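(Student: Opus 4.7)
The plan is to unfold both conditions into the same algebraic identity. Writing $\varphi_1 = \mathrm{id}+zS$ with $S \in \mathcal{B}_{\geq 1}^+$ and decomposing $f = f_0 + f_>$ where $f_0 := z^{\alpha} + z^{\alpha}R_{\alpha}$ and $f_> \in \mathrm{h.o.b.}(z)$, I will show that both the fixed-point equation $\mathcal{R}_f(\varphi_1) = \varphi_1$ and the prenormalization equation are equivalent to
\[
\varphi_1 \circ f_0 = \varphi_1^{\alpha}. \qquad (\star)
\]
For the fixed-point direction this is immediate: by the definition of composition with $z^{1/\alpha}$ via \eqref{CompEq1}, $\mathcal{R}_f(\varphi_1) = \varphi_1$ reads $\varphi_1 = (\varphi_1 \circ f_0)^{1/\alpha}$, which becomes $(\star)$ after raising to the $\alpha$-th power.

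For the prenormalization direction, I would first compose $\varphi_1 \circ f \circ \varphi_1^{-1} = z^{\alpha} + \mathrm{h.o.b.}(z)$ on the right with $\varphi_1$. Since $z^{\alpha} \circ \varphi_1 = \varphi_1^{\alpha}$ and composition on the right with the parabolic $\varphi_1$ preserves the $z$-order of every block, the prenormalization equation is equivalent to
\[
\varphi_1 \circ f = \varphi_1^{\alpha} + \mathrm{h.o.b.}(z).
\]
Expanding $\varphi_1$ around $z^{\alpha}$ via the Taylor formula \eqref{CompEq2} for both $f$ and $f_0$ and subtracting, the correction $\varphi_1 \circ f - \varphi_1 \circ f_0$ lies in $\mathrm{h.o.b.}(z)$: every term $\varphi_1^{(i)}(z^{\alpha})\bigl((z^{\alpha}R_{\alpha}+f_>)^i - (z^{\alpha}R_{\alpha})^i\bigr)/i!$ contains at least one factor $f_>$ of $z$-order strictly greater than $\alpha$, and $\varphi_1^{(i)}(z^{\alpha})$ has $z$-order at least $\alpha(1-i)$, so the total $z$-order of each summand exceeds $\alpha$. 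Hence the prenormalization condition reduces to $\varphi_1 \circ f_0 - \varphi_1^{\alpha} \in \mathrm{h.o.b.}(z)$.

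The decisive step, and the main obstacle, is to verify that $\varphi_1 \circ f_0 - \varphi_1^{\alpha}$ is already supported on the single $\alpha$-block, so that membership in $\mathrm{h.o.b.}(z)$ forces it to vanish and yields $(\star)$. The term $\varphi_1^{\alpha} = z^{\alpha}(1+S)^{\alpha}$ is manifestly a pure $\alpha$-block since $S$ has no $z$-dependence. For $\varphi_1 \circ f_0 = f_0 + f_0 \cdot (S \circ f_0)$, the key claim is that $S \circ f_0$ is again purely logarithmic. I would verify this inductively on the iterated logarithmic depth using the composition formulas \eqref{CompEq1}: one computes $\boldsymbol{\ell}_1 \circ f_0 = -1/(\alpha \log z + \log(1+R_{\alpha}))$, which expands via geometric series in $\boldsymbol{\ell}_1$ with coefficients polynomial in $\log(1+R_{\alpha})$; since $R_{\alpha}\in \mathcal{B}_{\geq 1}^+$ is itself purely logarithmic no $z^{\beta}$ factor ever enters, and the recursion $\boldsymbol{\ell}_{n+1} = \boldsymbol{\ell}_1 \circ \boldsymbol{\ell}_n$ propagates the property to every $\boldsymbol{\ell}_j$. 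Therefore $\varphi_1 \circ f_0 = z^{\alpha}\cdot Q$ for some $Q \in \mathcal{B}_{\geq 1}$, completing the chain of equivalences; as a byproduct $\mathcal{R}_f(\mathrm{id}+zT) = z\cdot Q^{1/\alpha} \in \mathrm{id}+z\mathcal{B}_{\geq 1}^+$, confirming the claimed codomain of $\mathcal{R}_f$.
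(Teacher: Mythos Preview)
Your argument is correct. The paper itself does not give a proof of this lemma but defers to \cite[Lemma~2.2.8]{Peran21Thesis}, so a direct comparison is not possible; however, your strategy of reducing both the fixed-point equation and the prenormalization equation to the single identity $\varphi_1\circ f_0=\varphi_1^{\alpha}$ is the natural one, and the technical core you isolate---that $S\circ f_0$ remains purely logarithmic, hence $\varphi_1\circ f_0$ is a pure $\alpha$-block---is exactly the kind of computation handled by the auxiliary results (Lemma~A.3.1, A.3.2) from the same thesis that the paper invokes elsewhere. Your Taylor estimate showing $\varphi_1\circ f-\varphi_1\circ f_0\in\mathrm{h.o.b.}(z)$ and the observation that right-composition with a parabolic element preserves $z$-order (so that one may pass freely between $\varphi_1\circ f\circ\varphi_1^{-1}=z^{\alpha}+\mathrm{h.o.b.}(z)$ and $\varphi_1\circ f=\varphi_1^{\alpha}+\mathrm{h.o.b.}(z)$) are both sound.
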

\noindent The proof of Lemma~\ref{LemmaPfPrenorm} is technical and can be found in \cite[Lemma~$2.2.8$]{Peran21Thesis}. \\

In the next examples we show that the operator $\mathcal{R}_{f}$, defined by \eqref{OperatorR}, is not a contraction on the space $\mathrm{id}+z\mathcal{B}_{\geq 1}^{+}$, with respect to the standard metrics.

\begin{example}[Non-contractibility of the operator $\mathcal{R}_{f}$]\hfill
	\begin{enumerate}[1., font=\textup, topsep=0.4cm, itemsep=0.4cm, leftmargin=0.6cm]
		\item Take metric $d$ on the space $\mathrm{id}+z\mathcal{B}_{\geq 1}^{+}\subseteq \mathcal{L}_{k}^{0}$, $k\in \mathbb{N}_{\geq 1}$, defined by:
		$$
		d(\mathrm{id}+zR,\mathrm{id}+zS):=\left\lbrace \begin{array}{ll}
			2^{-\mathrm{ord}_{\boldsymbol{\ell }_{1}}(R-S)}, & R\neq S, \\
			0, & R=S .
		\end{array} \right. 
		$$
		Now, consider $\mathrm{id}$ and $\mathrm{id}+z\boldsymbol{\ell }_{1}$. It is easy to see that $d(\mathrm{id},\mathrm{id}+z\boldsymbol{\ell}_{1})=\frac{1}{2}$. Take $f:=z^{\alpha }+z^{\alpha }R_{\alpha }+\mathrm{h.o.b}(z)$, $\alpha \in \mathbb{R}_{>1}$, $R_{\alpha }\in \mathcal{B}_{\geq 1}^{+}\subseteq \mathcal{L}_{k}$, $\mathrm{ord}_{\boldsymbol{\ell }_{1}}(R_{\alpha }) \geq 2$. Note that:
		\begin{align*}
			\mathcal{R}_{f}(\mathrm{id}) &= \mathrm{id}+z\sum _{i\geq 1}{\frac{1}{\alpha } \choose i}R_{\alpha }^{i} ,
		\end{align*}
		and
		\begin{align*}
			\mathcal{R}_{f}(\mathrm{id}+z\boldsymbol{\ell }_{1}) &= z^{\frac{1}{\alpha }}\circ (\mathrm{id}+z\boldsymbol{\ell }_{1}) \circ (z^{\alpha}+z^{\alpha }R_{\alpha }) \\
			&= \mathrm{id} + \frac{1}{\alpha ^{2}}z\boldsymbol{\ell }_{1} + \mathrm{h.o.t.}
		\end{align*}
		Therefore, $\mathcal{R}_{f}(\mathrm{id}+z\boldsymbol{\ell }_{1}) - \mathcal{R}_{f}(\mathrm{id}) = \frac{1}{\alpha ^{2}}z\boldsymbol{\ell }_{1}+\mathrm{h.o.t.}$, which implies that $d(\mathcal{R}_{f}(\mathrm{id}+z\boldsymbol{\ell }_{1}),\mathcal{R}_{f}(\mathrm{id}))=\frac{1}{2}$. Thus, $\mathcal{R}_{f}$ is not a contraction on the space $\mathrm{id}+z\mathcal{B}_{\geq 1}^{+}\subseteq \mathcal{L}_{k}^{0}$, with respect to the metric $d$.
		\item Take $\mathcal{B}_{\geq 1}^{+} \subseteq \mathcal{L}_{1}$ and let $d$ be the metric on the space $\mathrm{id}+ z\mathcal{B}_{\geq 1}^{+}\subseteq \mathcal{L}_{1}^{0}$, defined by:
		\begin{align}
			d(\mathrm{id}+zT_{1},\mathrm{id}+zT_{2}) &:= \sum _{i=1}^{+\infty }\frac{1}{2^{i}}\cdot \frac{\left| a_{1,i}-a_{2,i}\right| }{1+\left| a_{1,i}-a_{2,i}\right| } , \nonumber 
		\end{align}
		where $T_{1}:=\sum _{i=1}^{+ \infty }a_{1,i}\boldsymbol{\ell }_{1}^{i}$, $T_{2}:=\sum _{i=1}^{+ \infty }a_{2,i}\boldsymbol{\ell }_{1}^{i}$. Let $f:=z^{\alpha }+az^{\alpha }\boldsymbol{\ell }_{1}$, $\alpha \in \mathbb{R}_{>1}$, $a\in \mathbb{R}_{>0}$. Now, consider $\mathrm{id}$ and $\mathrm{id}+z\boldsymbol{\ell }_{1}$. It is easy to see that $d(\mathrm{id},\mathrm{id}+z\boldsymbol{\ell }_{1})=\frac{1}{4}$. Notice that:
		\begin{align}
			& \mathcal{R}_{f}(\mathrm{id}) = \mathrm{id}+z\sum _{i\geq 1}{\frac{1}{\alpha } \choose i}(a\boldsymbol{\ell }_{1})^{i} = \mathrm{id} + \frac{a}{\alpha }z\pmb{\ell }_{1} + \frac{1}{2\alpha }\Big( \frac{1}{\alpha }-1 \Big) a^{2}z\boldsymbol{\ell }_{1}^{2}+\mathrm{h.o.t.} \label{ExampleEq2}
		\end{align}
		and
		\begin{align}
			\mathcal{R}_{f}(\mathrm{id}+z\boldsymbol{\ell }_{1}) &= z^{\frac{1}{\alpha }}\circ (\mathrm{id}+z\boldsymbol{\ell }_{1}) \circ (z^{\alpha}+az^{\alpha }\boldsymbol{\ell }_{1}) \nonumber \\
			&= \mathrm{id} + \frac{1}{\alpha }\Big( a+\frac{1}{\alpha }\Big) z\boldsymbol{\ell }_{1} + \Bigg( \frac{1}{2\alpha }\Big( \frac{1}{\alpha }-1\Big) \Big( a+\frac{1}{\alpha }\Big) ^{2}+\frac{a}{\alpha ^{2}} \Bigg) z\boldsymbol{\ell }_{1}^{2} + \mathrm{h.o.t.} \label{ExampleEq3}
		\end{align}
		By \eqref{ExampleEq2} and \eqref{ExampleEq3}, it follows that:
		\begin{align*}
			d\big( \mathcal{R}_{f}(\mathrm{id}) , \mathcal{R}_{f}(\mathrm{id}+z\boldsymbol{\ell }_{1}) \big) & \geq \frac{1}{2}\cdot \frac{\frac{1}{\alpha ^{2}}}{1+ \frac{1}{\alpha ^{2}}} + \frac{1}{4} \cdot \frac{\left| \Big( \frac{1}{\alpha }-1\Big) \Big( \frac{a}{\alpha ^{2}}+\frac{1}{2\alpha ^{3}} \Big) +\frac{a}{\alpha ^{2}}\right| }{1+\left| \Big( \frac{1}{\alpha }-1\Big) \Big( \frac{a}{\alpha ^{2}}+\frac{1}{2\alpha ^{3}}\Big) +\frac{a}{\alpha ^{2}}\right| } \, .  
		\end{align*}
		As $\alpha $ tends to $1$ and $a$ tends to $+\infty $, $d(\mathcal{R}_{f}(\mathrm{id}) , \mathcal{R}_{f}(\mathrm{id}+z\boldsymbol{\ell }_{1}) )$ tends to $\frac{1}{2} \cdot \frac{1}{2}+ \frac{1}{4}\cdot 1= \frac{1}{2}$. Since $d(\mathrm{id},\mathrm{id}+z\boldsymbol{\ell }_{1})=\frac{1}{4}$, there exists sufficiently small $\alpha \in \mathbb{R}_{>1}$ and sufficiently big $a\in \mathbb{R}_{>0}$, such that $\mathcal{R}_{f}$ is not a contraction on the space $(\mathrm{id}+z\mathcal{B}_{\geq 1}^{+},d)$.
	\end{enumerate}
\end{example} 

Since $\mathcal{R}_{f}$ is not a contraction in any of the standard metrics, in Lemma~\ref{LemmaPrenormTS} we reformulate the prenormalization equation by introducing the operators $\mathcal{T}_{f}$ and $\mathcal{S}_{f}$, and applying the fixed point theorem from Proposition~\ref{PropFixedPoint} below.

\begin{prop}[A fixed point theorem]\label{PropFixedPoint}
	Let $X$ be a complete metric space, $\mathcal{T}:X\to X$ an isometry and a surjection, and $\mathcal{S}:X\to X$ a contraction. Then the equation $\mathcal{T}(x)=\mathcal{S}(x)$, $x\in X$, has a unique solution.
	
	Moreover, the solution $x\in X$ is the limit of the \emph{Picard sequence}, i.e.,
	\begin{align*}
		x & = \lim _{n\to \infty }\left( \left( \mathcal{T}^{-1}\circ \mathcal{S}\right) ^{\circ n}(y)\right) _{n} ,
	\end{align*}
	for any initial point $y\in X$.
\end{prop}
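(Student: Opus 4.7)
The plan is to reduce the equation $\mathcal{T}(x)=\mathcal{S}(x)$ to a standard fixed point equation for a contraction, and then invoke the \emph{Banach Fixed Point Theorem}.

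First I would observe that an isometry is automatically injective: if $\mathcal{T}(x_{1})=\mathcal{T}(x_{2})$, then $d(x_{1},x_{2})=d(\mathcal{T}(x_{1}),\mathcal{T}(x_{2}))=0$, so $x_{1}=x_{2}$. Combined with the surjectivity assumption, $\mathcal{T}$ is a bijection, so $\mathcal{T}^{-1}:X\to X$ is well-defined. Furthermore, $\mathcal{T}^{-1}$ is itself an isometry, since for any $x_{1},x_{2}\in X$,
\begin{align*}
    d\bigl(\mathcal{T}^{-1}(x_{1}),\mathcal{T}^{-1}(x_{2})\bigr) = d\bigl(\mathcal{T}(\mathcal{T}^{-1}(x_{1})),\mathcal{T}(\mathcal{T}^{-1}(x_{2}))\bigr) = d(x_{1},x_{2}).
\end{align*}

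Next I would rewrite the equation $\mathcal{T}(x)=\mathcal{S}(x)$ in the equivalent form $x=(\mathcal{T}^{-1}\circ \mathcal{S})(x)$, i.e., as a fixed point equation for the operator $\mathcal{U}:=\mathcal{T}^{-1}\circ \mathcal{S}:X\to X$. Since $\mathcal{S}$ is a $\lambda$-contraction for some $\lambda\in [0,1)$ and $\mathcal{T}^{-1}$ preserves distances, $\mathcal{U}$ is also a $\lambda$-contraction:
\begin{align*}
    d(\mathcal{U}(x_{1}),\mathcal{U}(x_{2}))=d(\mathcal{S}(x_{1}),\mathcal{S}(x_{2}))\leq \lambda\, d(x_{1},x_{2}),\quad x_{1},x_{2}\in X.
\end{align*}

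Since $(X,d)$ is complete, the Banach Fixed Point Theorem applies to $\mathcal{U}$, giving a unique fixed point $x\in X$ of $\mathcal{U}$, which by the equivalence above is the unique solution of $\mathcal{T}(x)=\mathcal{S}(x)$. The same theorem also yields the Picard convergence: for any initial $y\in X$, the sequence $(\mathcal{U}^{\circ n}(y))_{n}=((\mathcal{T}^{-1}\circ\mathcal{S})^{\circ n}(y))_{n}$ converges to $x$. There is no genuine obstacle here; the proof is essentially a translation of the hypotheses into the setting of the Banach Fixed Point Theorem, with the only thing to check being that the isometry-surjection assumption on $\mathcal{T}$ furnishes an isometric inverse $\mathcal{T}^{-1}$ so that contractivity of $\mathcal{S}$ transfers to $\mathcal{T}^{-1}\circ \mathcal{S}$ with the same constant.
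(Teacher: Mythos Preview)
Your proof is correct and matches the paper's approach: the paper does not give a detailed argument but simply notes that the result is an easy consequence of the Banach Fixed Point Theorem (citing \cite[Proposition 3.2]{PRRSFormal21}), which is exactly the reduction you carry out by showing $\mathcal{T}^{-1}\circ\mathcal{S}$ is a contraction on the complete space $X$.
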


\noindent Proposition~\ref{PropFixedPoint} is a special case of \cite[Proposition 3.2]{PRRSFormal21}, which is an easy consequence of the \emph{Banach Fixed Point Theorem}, and is motivated by the \emph{Krasnoselskii Fixed Point Theorem} (see e.g. \cite{XiangG16}).

In Lemma~\ref{LemmaPrenormTS} we transform prenormalization equation \eqref{EqPreNorm} into a fixed point equation.

\begin{lem}[A fixed point equation for prenormalization]\label{LemmaPrenormTS}
	Let $f\in \mathcal{L}_{k}^{H}$, $k\in \mathbb{N}_{\geq 1}$, be such that $f= z^{\alpha }+z^{\alpha }R_{\alpha }+\mathrm{h.o.b}(z)$, for $\alpha >1$, $R_{\alpha }\in \mathcal{B}_{\geq 1}^{+}\subseteq \mathcal{L}_{k}$. A logarithmic transseries $\varphi _{1}=\mathrm{id}+zS$, $S \in \mathcal{B}_{\geq 1}^{+}\subseteq \mathcal{L}_{k}$, is a solution of the prenormalization equation
	\begin{align}
		\varphi _{1} \circ f\circ \varphi _{1}^{-1} = z^{\alpha }+\mathrm{h.o.b}(z) \nonumber 
	\end{align}
	if and only if $S$ is a solution of the \emph{fixed point equation}
	\begin{align}
		\mathcal{T}_{f}(S) = \mathcal{S}_{f}(S) , \label{EqFixedPoint}
	\end{align}
	where $\mathcal{T}_{f}, \mathcal{S}_{f}:\mathcal{B}_{\geq 1}^{+} \to \mathcal{B}_{\geq 1}^{+}$ are the operators defined by:
	\begin{align}
		\mathcal{T}_{f}(S) := S \circ z^{\alpha } + (S\circ z^{\alpha })R_{\alpha }- \sum _{i\geq 1}{\alpha \choose i}S^{i}, \quad S \in \mathcal{B}_{\geq 1}^{+} \subseteq \mathcal{L}_{k} , \label{Tdef}
	\end{align}
	and
	\begin{align}
		\mathcal{S}_{f}(S) := - R_{\alpha } - ((D_{1}S) \circ z^{\alpha })R_{\alpha } - \mathcal{K}_{f}(S) , \quad S \in \mathcal{B}_{\geq 1}^{+} \subseteq \mathcal{L}_{k} , \label{Sdef}
	\end{align}
	where $\mathcal{K}_{f}$ is a suitable $\frac{1}{2}$-contraction on the space $(\mathcal{B}_{\geq 1}^{+},d_{1})$ whose explicit form is revealed in the proof.
\end{lem}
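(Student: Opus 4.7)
The plan is to invoke Lemma~\ref{LemmaPfPrenorm}, which has already reformulated the prenormalization equation as the fixed-point identity $\mathcal{R}_{f}(\mathrm{id}+zS)=\mathrm{id}+zS$. What remains is purely algebraic: unfold this identity into an explicit equation in $S$, split it into the $\boldsymbol{\ell}_{1}$-order-preserving part $\mathcal{T}_{f}(S)$ and the strictly $\boldsymbol{\ell}_{1}$-order-raising part $\mathcal{S}_{f}(S)$, read off $\mathcal{K}_{f}$ from what is absorbed into $\mathcal{S}_{f}$, and verify that $\mathcal{K}_{f}$ is a $\frac{1}{2}$-contraction on $(\mathcal{B}_{\geq 1}^{+},d_{1})$.

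First I would compose with $z^{\alpha}$ on the left of both sides to kill the outer $z^{1/\alpha}$, obtaining
\begin{align*}
	z^{\alpha}(1+S)^{\alpha} \;=\; (\mathrm{id}+zS)\circ(z^{\alpha}+z^{\alpha}R_{\alpha}) \;=\; z^{\alpha}(1+R_{\alpha})\bigl(1+S\circ(z^{\alpha}+z^{\alpha}R_{\alpha})\bigr).
\end{align*}
After dividing by $z^{\alpha}$, using the binomial expansion $(1+S)^{\alpha}-1=\sum_{i\geq 1}\binom{\alpha}{i}S^{i}$ from \eqref{CompEq1}, and subtracting $1$, the equation becomes
\begin{align*}
	\sum_{i\geq 1}\binom{\alpha}{i}S^{i} \;=\; R_{\alpha} + (1+R_{\alpha})\,S\circ(z^{\alpha}+z^{\alpha}R_{\alpha}),
\end{align*}
an identity between elements of $\mathcal{B}_{\geq 1}^{+}$.

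The core computation is to expand $S\circ(z^{\alpha}+z^{\alpha}R_{\alpha})$ around $z^{\alpha}$ by the Taylor formula \eqref{CompEq2}, and to rewrite the first-order Taylor correction $S'(z^{\alpha})\,z^{\alpha}R_{\alpha}$ via the chain-rule identity
\begin{align*}
	z\,\frac{dS}{dz} \;=\; D_{1}S + \boldsymbol{\ell}_{1}D_{2}S + \boldsymbol{\ell}_{1}\boldsymbol{\ell}_{2}D_{3}S + \cdots,
\end{align*}
which itself follows from $z\boldsymbol{\ell}_{m}'=\boldsymbol{\ell}_{m}^{2}\boldsymbol{\ell}_{m-1}\cdots\boldsymbol{\ell}_{1}$. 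Only the summand $D_{1}S$ keeps the same $\boldsymbol{\ell}_{1}$-order as $S$; each other summand, as well as every higher-order Taylor remainder $\frac{1}{i!}S^{(i)}(z^{\alpha})(z^{\alpha}R_{\alpha})^{i}$ with $i\geq 2$, carries a strictly larger $\boldsymbol{\ell}_{1}$-order, either because of the extra prefactor $\boldsymbol{\ell}_{1}\cdots\boldsymbol{\ell}_{m-1}$ (for $m\geq 2$) or because of an extra $R_{\alpha}\in\mathcal{B}_{\geq 1}^{+}$. Substituting and grouping, the pieces whose $\boldsymbol{\ell}_{1}$-order equals that of $S$ form exactly $\mathcal{T}_{f}(S)=S\circ z^{\alpha}+(S\circ z^{\alpha})R_{\alpha}-\sum_{i\geq 1}\binom{\alpha}{i}S^{i}$; the isolated terms $-R_{\alpha}$ and $-((D_{1}S)\circ z^{\alpha})R_{\alpha}$ appear on the opposite side with the correct signs; and the rest, namely $R_{\alpha}^{2}(D_{1}S)\circ z^{\alpha}$, the contributions from $D_{m}S$ for $m\geq 2$, and all higher Taylor remainders, defines $\mathcal{K}_{f}(S)$.

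The main (and essentially only non-formal) obstacle is the $\frac{1}{2}$-contraction estimate on $\mathcal{K}_{f}$. The plan is to verify term by term that each contribution to $\mathcal{K}_{f}(S_{1})-\mathcal{K}_{f}(S_{2})$ has $\boldsymbol{\ell}_{1}$-order strictly larger than $\mathrm{ord}_{\boldsymbol{\ell}_{1}}(S_{1}-S_{2})$, using that composition with $z^{\alpha}$ preserves $\boldsymbol{\ell}_{1}$-order (since $\boldsymbol{\ell}_{1}\circ z^{\alpha}=\boldsymbol{\ell}_{1}/\alpha$ and, inductively, $\boldsymbol{\ell}_{m}\circ z^{\alpha}$ is a $\boldsymbol{\ell}_{m}$-series of $\boldsymbol{\ell}_{1}$-order $0$), that $\mathrm{ord}_{\boldsymbol{\ell}_{1}}(R_{\alpha})\geq 1$, and that each higher $D_{m}$- or Taylor-remainder contribution carries at least one extra $\boldsymbol{\ell}_{1}$-factor. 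Combined, these give $\mathrm{ord}_{\boldsymbol{\ell}_{1}}(\mathcal{K}_{f}(S_{1})-\mathcal{K}_{f}(S_{2}))\geq \mathrm{ord}_{\boldsymbol{\ell}_{1}}(S_{1}-S_{2})+1$, which is exactly the $\frac{1}{2}$-contraction property in $d_{1}$.
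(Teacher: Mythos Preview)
The paper does not give a self-contained proof of this lemma; it cites \cite[Lemma~2.2.10]{Peran21Thesis}. Your overall plan---reduce to the fixed point of $\mathcal{R}_{f}$ via Lemma~\ref{LemmaPfPrenorm}, compose with $z^{\alpha}$, expand $S\circ(z^{\alpha}+z^{\alpha}R_{\alpha})$ by the formal Taylor theorem, split via the Euler chain rule $z\,dS/dz=D_{1}S+\boldsymbol{\ell}_{1}D_{2}S+\cdots$, and collect the remaining terms into $\mathcal{K}_{f}$---is exactly the natural route and matches what the lemma statement is set up for.

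There is, however, a genuine slip in your contraction argument. You assert that $\mathrm{ord}_{\boldsymbol{\ell}_{1}}(R_{\alpha})\geq 1$ and that $D_{1}S$ keeps the same $\boldsymbol{\ell}_{1}$-order as $S$. Both are false in general: $R_{\alpha}\in\mathcal{B}_{\geq 1}^{+}$ (not $\mathcal{B}_{1}^{+}$), so its leading monomial may be $\boldsymbol{\ell}_{2},\boldsymbol{\ell}_{3},\ldots$, giving $\mathrm{ord}_{\boldsymbol{\ell}_{1}}(R_{\alpha})=0$; and the paper itself notes (Subsection~\ref{SubsubsectionBlocks}) that $D_{1}$ is a $\tfrac{1}{2}$-contraction on $(\mathcal{B}_{1},d_{1})$, i.e.\ it \emph{raises} $\boldsymbol{\ell}_{1}$-order by at least one. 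Your two errors happen to cancel for the term $((D_{1}S)\circ z^{\alpha})R_{\alpha}$, but your justification for the higher Taylor remainders (``an extra $R_{\alpha}$'') breaks down when $\mathrm{ord}_{\boldsymbol{\ell}_{1}}(R_{\alpha})=0$.

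The fix is immediate once you locate the extra $\boldsymbol{\ell}_{1}$ correctly: it always comes from differentiation, not from $R_{\alpha}$. Since $z\,d/dz$ raises $\mathrm{ord}_{\boldsymbol{\ell}_{1}}$ by at least one (each summand $\boldsymbol{\ell}_{1}\cdots\boldsymbol{\ell}_{m-1}D_{m}$ does), and $z^{i}S^{(i)}$ is an integer combination of $(z\,d/dz)^{j}S$ for $1\leq j\leq i$, every term of $\mathcal{K}_{f}(S)$---the $D_{m}$-contributions for $m\geq 2$, the $R_{\alpha}^{2}(zS')\circ z^{\alpha}$ piece, and all Taylor remainders with $i\geq 2$---has $\boldsymbol{\ell}_{1}$-order at least $\mathrm{ord}_{\boldsymbol{\ell}_{1}}(S)+1$, independently of $\mathrm{ord}_{\boldsymbol{\ell}_{1}}(R_{\alpha})$. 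Linearity of $\mathcal{K}_{f}$ in $S$ then gives the $\tfrac{1}{2}$-contraction. With this correction your argument goes through.
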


\noindent The proof of Lemma~\ref{LemmaPrenormTS} is technical and can be found in \cite[Lemma~$2.2.10$]{Peran21Thesis}. \\

In Lemma~\ref{LemmaConditionsPrenorm} we prove that the operators $\mathcal{T}_{f}$ and $\mathcal{S}_{f}$ defined by \eqref{Tdef} and \eqref{Sdef}, respectively, satisfy the assumptions of Proposition~\ref{PropFixedPoint}.

\begin{lem}[Properties of the operators $\mathcal{T}_{f}$ and $\mathcal{S}_{f}$]\label{LemmaConditionsPrenorm}
	Let $f\in \mathcal{L}_{k}^{H}$, $k\in \mathbb{N}_{\geq 1}$, be such that $f=z^{\alpha }+z^{\alpha }R_{\alpha }+\mathrm{h.o.b.}(z)$, $\alpha \in \mathbb{R}_{>1}$, and $R_{\alpha }\in \mathcal{B}_{\geq 1}^{+}\subseteq \mathcal{L}_{k}$. Let $\mathcal{T}_{f}, \mathcal{S}_{f}:\mathcal{B}_{\geq 1}^{+}\to \mathcal{B}_{\geq 1}^{+}$ be the operators defined by \eqref{Tdef} and \eqref{Sdef}, respectively. Then:
	\begin{enumerate}[1., font=\textup, topsep=0.4cm, itemsep=0.4cm, leftmargin=0.6cm]
		\item $\mathcal{T}_{f}$ is an isometry and a surjection on the space $(\mathcal{B}_{\geq 1}^{+},d_{1})$,
		\item $\mathcal{S}_{f}$ is a $\frac{1}{2}$-contraction on the space $(\mathcal{B}_{\geq 1}^{+},d_{1})$.
	\end{enumerate}
\end{lem}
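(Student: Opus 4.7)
My plan is to isolate the ``diagonal linear part'' of $\mathcal{T}_f$ and reduce both statements to the elementary facts that $\boldsymbol{\ell}_1\circ z^\alpha=\tfrac{1}{\alpha}\boldsymbol{\ell}_1$, that $\boldsymbol{\ell}_m\circ z^\alpha=\boldsymbol{\ell}_m+(\text{terms of positive }\boldsymbol{\ell}_1\text{-order})$ for $m\geq 2$, and that $D_1=\boldsymbol{\ell}_1^2\tfrac{d}{d\boldsymbol{\ell}_1}$ strictly raises $\mathrm{ord}_{\boldsymbol{\ell}_1}$ by at least $1$.

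\textbf{Statement (1).} I would rewrite $\mathcal{T}_f(S)=L(S)+N(S)$, where $L(S):=S\circ z^\alpha-\alpha S$ is the ``diagonal'' part (taking $i=1$ in $\sum_{i\geq 1}\binom{\alpha}{i}S^i$) and $N(S):=(S\circ z^\alpha)R_\alpha-\sum_{i\geq 2}\binom{\alpha}{i}S^i$ collects the remainder. If $S\in\mathcal{B}_{\geq 1}^+$ has leading $\boldsymbol{\ell}_1$-block $T\boldsymbol{\ell}_1^n$ with $T\in\mathcal{B}_{\geq 2}$ (and $T\in\mathcal{B}_{\geq 2}^+$ if $n=0$), then the first identity above gives $L(S)=(\alpha^{-n}-\alpha)T\boldsymbol{\ell}_1^n+(\text{h.o.t.\ in }\boldsymbol{\ell}_1)$, and the scalar $\alpha^{-n}-\alpha$ is nonzero for every $n\geq 0$ because $\alpha>1$. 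For the remainder, $S^i$ with $i\geq 2$ has $\mathrm{ord}_{\boldsymbol{\ell}_1}(S^i)\geq 2n$ (sub-additivity combined with positive lex order of $S$), and $(S\circ z^\alpha)R_\alpha$ contributes at order $\geq n$, with leading coefficient of the form $\tfrac{T\cdot R_\alpha^{(0)}}{\alpha^n}\boldsymbol{\ell}_1^n$, where $R_\alpha^{(0)}\in\mathcal{B}_{\geq 2}^+$ is the $\boldsymbol{\ell}_1^0$-part of $R_\alpha$. Since $R_\alpha^{(0)}$ has strictly positive lex order in $\boldsymbol{\ell}_2,\dots,\boldsymbol{\ell}_k$, it cannot cancel the nonzero scalar $\alpha^{-n}-\alpha$, so $\mathrm{ord}_{\boldsymbol{\ell}_1}(\mathcal{T}_f(S))=\mathrm{ord}_{\boldsymbol{\ell}_1}(S)$. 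Replacing $S$ by $S_1-S_2$ in the same computation (and expanding $S_1^i-S_2^i=(S_1-S_2)(S_1^{i-1}+\cdots+S_2^{i-1})$ to keep linearity in the leading $\boldsymbol{\ell}_1$-order) gives $\mathrm{ord}_{\boldsymbol{\ell}_1}(\mathcal{T}_f(S_1)-\mathcal{T}_f(S_2))=\mathrm{ord}_{\boldsymbol{\ell}_1}(S_1-S_2)$, which is the isometry claim. For surjectivity I would argue block-by-block in the $\boldsymbol{\ell}_1$-grading: given $V\in\mathcal{B}_{\geq 1}^+$, the equation $\mathcal{T}_f(S)=V$ is triangular when solved for the successive $\boldsymbol{\ell}_1^n$-coefficients $S_n\in\mathcal{B}_{\geq 2}$ of $S$; the equation at level $n$ is linear in $S_n$ with multiplier $\alpha^{-n}(1+R_\alpha^{(0)})-\alpha$, which is a unit in $\mathcal{B}_{\geq 2}$ because its ``constant part'' $\alpha^{-n}-\alpha$ is a nonzero real, and so admits a unique solution depending only on $S_0,\dots,S_{n-1}$ and on the first $n$ blocks of $V$.

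\textbf{Statement (2).} Here I would estimate each summand of $\mathcal{S}_f$ separately in $d_1$. The constant summand $-R_\alpha$ contributes nothing. By hypothesis $\mathcal{K}_f$ is already a $\tfrac{1}{2}$-contraction. For the middle summand, $D_1$ is a $\tfrac{1}{2}$-contraction on $(\mathcal{B}_{\geq 1}^+,d_1)$ by the factor-of-$\boldsymbol{\ell}_1^2$ mentioned above; composition with $z^\alpha$ preserves $\mathrm{ord}_{\boldsymbol{\ell}_1}$ (only rescaling by a power of $\alpha$); and multiplying by $R_\alpha\in\mathcal{B}_{\geq 1}^+$ preserves or increases $\mathrm{ord}_{\boldsymbol{\ell}_1}$. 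Thus the map $S\mapsto((D_1 S)\circ z^\alpha)R_\alpha$ is also a $\tfrac{1}{2}$-contraction. Since $d_1$ is an ultrametric, the sum of the three estimates gives $d_1(\mathcal{S}_f(S_1),\mathcal{S}_f(S_2))\leq\tfrac{1}{2}d_1(S_1,S_2)$.

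\textbf{Main obstacle.} The delicate point is the non-cancellation argument in statement (1): making sure that the $\boldsymbol{\ell}_1^n$-coefficient of $\mathcal{T}_f(S_1)-\mathcal{T}_f(S_2)$ is genuinely nonzero rather than being wiped out by the contribution of $((S_1-S_2)\circ z^\alpha)R_\alpha$. This requires keeping careful track of the ``constant vs.\ higher $\boldsymbol{\ell}_m$'' decomposition of $R_\alpha$ and using that $R_\alpha^{(0)}\in\mathcal{B}_{\geq 2}^+$ cannot coincide with the nonzero real scalar $\alpha^{n+1}-1$. Once this linear-algebraic fact is set up, the rest of both statements is a bookkeeping exercise on $\mathrm{ord}_{\boldsymbol{\ell}_1}$.
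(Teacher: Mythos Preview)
The paper does not actually prove this lemma; it only cites the author's thesis (\cite[Proposition~2.2.11]{Peran21Thesis}). So there is no in-paper argument to compare against, and your outline has to be judged on its own merits.

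Your plan is sound and almost certainly close to what the thesis does. The contraction estimate for $\mathcal{S}_f$ is correct as written: $D_1$ raises $\mathrm{ord}_{\boldsymbol{\ell}_1}$ by at least $1$ (note $D_1(\boldsymbol{\ell}_m)=\boldsymbol{\ell}_1\boldsymbol{\ell}_2^2\cdots\boldsymbol{\ell}_m^2$ for $m\geq 2$, so even the $\mathcal{B}_{\geq 2}$-coefficients contribute at $\boldsymbol{\ell}_1$-order $\geq 1$), composition with $z^\alpha$ and multiplication by $R_\alpha$ do not lower it, $\mathcal{K}_f$ is a $\tfrac12$-contraction by hypothesis, and $d_1$ is an ultrametric. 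The isometry argument for $\mathcal{T}_f$ is also correct in spirit: the key point is that the leading $\boldsymbol{\ell}_1^n$-block of $\mathcal{T}_f(S_1)-\mathcal{T}_f(S_2)$ has leading \emph{monomial} $(\alpha^{-n}-\alpha)\,\mathrm{Lt}(S_1-S_2)$, and $\alpha^{-n}-\alpha\neq 0$ for all $n\geq 0$ since $\alpha>1$.

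One genuine imprecision to fix is in your surjectivity step. The level-$n$ equation is \emph{not} multiplication of $S_n$ by the unit $\alpha^{-n}(1+R_\alpha^{(0)})-\alpha$. Two things go wrong with that description: first, the term coming from $(S\circ z^\alpha)(1+R_\alpha)$ contributes $\alpha^{-n}(S_n\circ z^\alpha)(1+R_\alpha^{(0)})$, i.e.\ it involves the automorphism $S_n\mapsto S_n\circ z^\alpha$ of $\mathcal{B}_{\geq 2}$, not just multiplication; second, the part linear in $S_n$ coming from $\sum_{i\geq 1}\binom{\alpha}{i}S^i$ is $\alpha(1+S_0)^{\alpha-1}S_n$, not $\alpha S_n$, once $S_0\neq 0$. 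Neither issue is fatal: the resulting $\mathbb{R}$-linear operator on $\mathcal{B}_{\geq 2}$ sending $S_n$ to $\alpha^{-n}(S_n\circ z^\alpha)(1+R_\alpha^{(0)})-\alpha(1+S_0)^{\alpha-1}S_n$ has leading part $(\alpha^{-n}-\alpha)\,\mathrm{id}$ (because $S_n\circ z^\alpha=S_n+\mathrm{h.o.t.}$, $R_\alpha^{(0)}\in\mathcal{B}_{\geq 2}^+$, and $(1+S_0)^{\alpha-1}=1+\mathrm{h.o.t.}$), and everything else strictly raises order. So it is invertible by the usual ``invertible scalar plus small'' argument, and your triangular induction goes through; but you should state the operator correctly and invoke that argument rather than calling it multiplication by a unit.
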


\noindent The proof of Lemma~\ref{LemmaConditionsPrenorm} is technical and can be found in \cite[Proposition~$2.2.11$]{Peran21Thesis}. \\

In Proposition~\ref{PropSolutionPrenorm} below, we prove existence and uniqueness of the prenormalization $\varphi _{1}$ given in the \emph{canonical form}.

\begin{prop}[A unique \emph{canonical} solution of a prenormalization equation]\label{PropSolutionPrenorm}
	Let $f\in \mathcal{L}_{k}^{H}$, $k\in \mathbb{N}_{\geq 1}$, be such that $f=z^{\alpha }+z^{\alpha }R_{\alpha }+\mathrm{h.o.b.}(z)$, for $\alpha >1$, $R_{\alpha }\in \mathcal{B}_{\geq 1}^{+}\subseteq \mathcal{L}_{k}$. There exists a unique parabolic logarithmic transseries $\varphi _{1}\in \mathcal{L}_{k}^{0}$ of the canonical form $\varphi _{1}=\mathrm{id}+zS$, $S\in \mathcal{B}_{\geq 1}^{+}\subseteq \mathcal{L}_{k}$, such that $\varphi _{1} \circ f\circ \varphi _{1}^{-1}=z^{\alpha }+ \mathrm{h.o.b.}(z)$.
\end{prop}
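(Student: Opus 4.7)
The plan is to synthesize the three preceding technical lemmas with the abstract fixed point theorem in Proposition~\ref{PropFixedPoint}, so that existence and uniqueness of the canonical prenormalization reduce to verifying that a single fixed point equation fits a standard framework.

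First, I would invoke Lemma~\ref{LemmaPrenormTS}: finding a canonical prenormalization $\varphi _{1}=\mathrm{id}+zS$, with $S\in \mathcal{B}_{\geq 1}^{+}$, such that $\varphi _{1}\circ f\circ \varphi _{1}^{-1}=z^{\alpha }+\mathrm{h.o.b.}(z)$, is equivalent to solving the fixed point equation $\mathcal{T}_{f}(S)=\mathcal{S}_{f}(S)$ in $\mathcal{B}_{\geq 1}^{+}$, with $\mathcal{T}_{f}$ and $\mathcal{S}_{f}$ the operators defined in \eqref{Tdef} and \eqref{Sdef}. It therefore suffices to establish existence and uniqueness of such an $S$.

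Next, I would check that the hypotheses of Proposition~\ref{PropFixedPoint} are met on the metric space $(\mathcal{B}_{\geq 1}^{+},d_{1})$. Completeness follows since $(\mathcal{B}_{1},d_{1})$ is complete by \cite[Proposition 6.2]{PRRSFormal21} and $\mathcal{B}_{\geq 1}^{+}=\mathcal{B}_{1}^{+}\oplus \cdots \oplus \mathcal{B}_{k}^{+}$ is a $d_{1}$-closed subspace (a $d_{1}$-limit of elements whose lexicographic order exceeds $\boldsymbol{0}_{k+1}$ inherits that property). Statement~1 of Lemma~\ref{LemmaConditionsPrenorm} says that $\mathcal{T}_{f}$ is an isometry and a surjection on $(\mathcal{B}_{\geq 1}^{+},d_{1})$, while statement~2 says that $\mathcal{S}_{f}$ is a $\frac{1}{2}$-contraction on the same space. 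Applying Proposition~\ref{PropFixedPoint} then produces a unique $S\in \mathcal{B}_{\geq 1}^{+}$ with $\mathcal{T}_{f}(S)=\mathcal{S}_{f}(S)$, so that $\varphi _{1}:=\mathrm{id}+zS\in \mathcal{L}_{k}^{0}$ is the required unique canonical prenormalization.

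The real work is not inside this proposition, which is essentially assembly, but in setting up its ingredients in the two previous lemmas. The delicate step is the composition calculation of Lemma~\ref{LemmaPrenormTS} that splits the defining equation into an isometric piece $\mathcal{T}_{f}$ and a contractive piece $\mathcal{S}_{f}$, together with the bookkeeping in Lemma~\ref{LemmaConditionsPrenorm} that confirms the residual nonlinear operator $\mathcal{K}_{f}$ absorbed into $\mathcal{S}_{f}$ is indeed a $\frac{1}{2}$-contraction in $d_{1}$. The key design choice making this possible is the use of the derivation $D_{1}=\boldsymbol{\ell}_{1}^{2}\cdot \frac{d}{d\boldsymbol{\ell}_{1}}$, which is itself a $\frac{1}{2}$-contraction in $d_{1}$, in place of $\frac{d}{d\boldsymbol{\ell}_{1}}$; this is precisely what keeps the derivative terms appearing in $\mathcal{S}_{f}$ from destroying contractivity.
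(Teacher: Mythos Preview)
Your proposal is correct and follows exactly the same approach as the paper, which proves the proposition in one line: ``Directly from Lemma~\ref{LemmaPrenormTS}, Lemma~\ref{LemmaConditionsPrenorm} and Proposition~\ref{PropFixedPoint}.'' Your version is more explicit (in particular, you spell out why $(\mathcal{B}_{\geq 1}^{+},d_{1})$ is complete, which Proposition~\ref{PropFixedPoint} requires but the paper leaves implicit from the background in Subsection~\ref{SubsubsectionBlocks}), and your closing commentary correctly identifies that the substance lies in the preceding lemmas rather than in this assembly step.
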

\begin{proof}
	Directly from Lemma~\ref{LemmaPrenormTS}, Lemma~\ref{LemmaConditionsPrenorm} and Proposition~\ref{PropFixedPoint}.
\end{proof}

\begin{rem}[The algorithm for the canonical prenormalization]
	Let $\varphi _{1}=\mathrm{id}+zS$, $S\in \mathcal{B}_{\geq 1}^{+}\subseteq \mathcal{L}_{k}$, be the prenormalization obtained in Proposition~\ref{PropSolutionPrenorm}. By the fixed point theorem stated in Proposition~\ref{PropFixedPoint}, it follows that $S$ is obtained as the limit of the \emph{Picard sequence}
	\begin{align*}
		\varphi _{1} &= \lim _{n\to \infty } \left( \left( \mathcal{T}_{f}^{-1}\circ \mathcal{S}_{f}\right) ^{\circ n}(H)\right) _{n} ,
	\end{align*}
	for any initial condition $H\in \mathcal{B}_{\geq 1}^{+}\subseteq \mathcal{L}_{k}$, with respect to the metric $d_{1}$ on $\mathcal{B}_{\geq 1}^{+}\subseteq \mathcal{L}_{k}$.
\end{rem}

\medskip

\subsubsection{Proof of statement 1 of Theorem~A}\label{SubsubsectionProofStatement1}

\begin{proof}
	Let $f\in \mathcal{L}_{k}^{H}$, $k\in \mathbb{N}_{\geq 1}$, be such that $f=z^{\alpha }+z^{\alpha }R_{\alpha }+\mathrm{h.o.b.}(z)$, $\alpha >1$, $R_{\alpha }\in \mathcal{B}_{\geq 1}^{+}\subseteq \mathcal{L}_{k}$. By Proposition~\ref{PropSolutionPrenorm}, there exists a unique $\varphi _{1}=\mathrm{id}+zS$, $S \in \mathcal{B}_{\geq 1}^{+}\subseteq \mathcal{L}_{k}$, such that $\varphi _{1} \circ f\circ \varphi _{1}^{-1}=z^{\alpha }+\mathrm{h.o.b.}(z)$. Note that $\varphi _{1} =\mathrm{id}$ and $\varphi _{1}\circ f \circ \varphi _{1}^{-1}=f$, if $z^{\alpha }R_{\alpha }=0$. By Proposition~\ref{PropContractibility}, the operator $\mathcal{P}_{\varphi _{1}\circ f\circ \varphi _{1}^{-1}}$ is a contraction on the space $\mathrm{id}+\mathcal{L}_{k}^{\beta }$, where $\beta := \mathrm{ord}_{z}(\varphi _{1}\circ f\circ \varphi _{1}^{-1}-z^{\alpha })-\alpha +1$. By the Banach Fixed Point Theorem and Proposition~\ref{PropNormFixed}, there exists a unique $\varphi _{2} \in \mathrm{id}+\mathcal{L}_{k}^{\beta }$, such that:
	\begin{align*}
		\varphi _{2}\circ (\varphi _{1}\circ f\circ \varphi _{1}^{-1})\circ \varphi _{2}^{-1} &= z^{\alpha } .
	\end{align*}
	Now, for $\varphi := \varphi _{2}\circ \varphi _{1}$, we get $\varphi  \in \mathcal{L}_{k}^{0}$ and $\varphi  \circ f \circ \varphi ^{-1} = z^{\alpha }$.
	
	Now we prove the uniqueness of $\varphi  \in \mathfrak L^{0}$. Suppose that $\psi  \in \mathfrak L^{0}$ such that $\psi  \circ f\circ \psi ^{-1}=z^{\alpha }$. Then there exists $k_{1}\in \mathbb{N}$, such that $\psi \in \mathcal{L}_{k_{1}}^{0}$. Put $k_{0}:=\max \left\lbrace k,k_{1}\right\rbrace $. Since $\mathcal{L}_{k},\mathcal{L}_{k_{1}}\subseteq \mathcal{L}_{k_{0}}$, we get $f,\varphi ,\psi \in \mathcal{L}_{k_{0}}$. Now, we take $\mathcal{L}_{k_{0}}$ as an \emph{ambient space}. Let $T \in \mathcal{B}_{\geq 1}^{+}\subseteq \mathcal{L}_{k_{0}}$ be such that $\psi = \mathrm{id} + zT+\mathrm{h.o.b.}(z)$. Put $\psi _{1}:=\mathrm{id}+zT$ and $\psi _{2}:=\mathrm{id}+(\psi -\psi _{1})\circ \psi _{1}^{-1}$. Note that $\mathrm{ord}_{z}(\psi _{2}-\mathrm{id}) >1$ and $\psi _{2}\circ \psi _{1} = \psi _{1}+\psi -\psi _{1}= \psi $. Since
	\begin{align}
		z^{\alpha }= \psi \circ f\circ \psi ^{-1} = \psi _{2}\circ \psi _{1} \circ f\circ \psi _{1}^{-1}\circ \psi _{2}^{-1} \nonumber 
	\end{align}
	and $\mathrm{ord}_{z}(\psi _{2} - \mathrm{id}) >1$, it follows that:
	\begin{align}
		\psi _{1}\circ f\circ \psi _{1}^{-1} = \psi _{2}^{-1} \circ z^{\alpha } \circ \psi _{2} = z^{\alpha }+ \mathrm{h.o.b.}(z). \nonumber
	\end{align}
	By Proposition~\ref{PropSolutionPrenorm} we get $\psi _{1} = \varphi _{1}$. Put $\beta := \mathrm{ord}_{z}(\varphi _{1}\circ f\circ \varphi _{1}^{-1}-z^{\alpha }) - \alpha +1$. By Proposition~\ref{PropNormOrder}, it follows that $\mathrm{ord}_{z}(\psi _{2}-\mathrm{id}) \geq \beta >1$, so $\psi _{2} \in \mathrm{id}+\mathcal{L}_{k_{0}}^{\beta }$ and $\mathcal{P}_{\varphi _{1}\circ f\circ \varphi _{1}^{-1}}(\psi _{2})=\psi _{2}$. Now, Proposition~\ref{PropContractibility} implies that $\mathcal{P}_{\varphi _{1}\circ f\circ \varphi _{1}^{-1}}$ is a contraction. Therefore, $\psi _{2} = \varphi _{2}$, so $\psi = \varphi $.
\end{proof}

\medskip

\subsection[Proof of statement 2 of Theorem~A]{Proof of statement 2 of Theorem~A (The convergence of the B\"ottcher sequence)}\label{SubsectionProofOf2}

Let $f\in \mathcal{L}_{k}^{H}$, $k\in \mathbb{N}$, $f=z^{\alpha }+\mathrm{h.o.t.}$, $\alpha >1$, and $\mathrm{ord}_{z}\, (f-z^{\alpha })>\alpha $. By Proposition~\ref{PropContractibility} the B\"ottcher operator $\mathcal{P}_{f}$ is, in that case, a contraction on the space $(\mathrm{id}+\mathcal{L}_{k}^{\beta },d_{z})$, for $\beta :=\mathrm{ord}_{z} \, (f-z^{\alpha })- \alpha +1$. This implies that the B\"ottcher sequence $\left( \mathcal{P}_{f}^{\circ n}(h)\right) _{n}$ converges towards the normalization $\varphi $ on the space $(\mathrm{id}+\mathcal{L}_{k}^{\beta },d_{z})$, for each $h\in \mathrm{id}+\mathcal{L}_{k}^{\beta }$.

However, this is not true if $\mathrm{ord}_{z}\, (f-z^{\alpha })=\alpha $, i.e., $\beta =1$. Therefore, if $\beta =1$, we first prove in Lemma~\ref{PropConvergenceFirst} the convergence of the B\"ottcher sequence $\left( \mathcal{R}_{f}^{\circ n}(z+zH)\right) _{n}$ on the space $\mathrm{id}+z\mathcal{B}_{\geq 1}^{+}$ (in the weak topology) towards the prenormalization $\varphi _{1}$ given in the canonical form, for each $z+zH\in \mathrm{id}+z\mathcal{B}_{\geq 1}^{+}$. And then, at the and of the subsection, we apply the discussion above. \\

We first prove some technical general result for the convergence of a sequence of compositions in the weak topology.

\begin{prop}[Convergence of a sequence of compositions]\label{PropConvComp}
	Let $(\psi _{n})$, $(\varphi _{n})$ be sequences in $\mathcal{L}_{k}^{0}$, $k\in \mathbb{N}$, which converge in the weak topology to $\psi $, $\varphi  \in \mathcal{L}_{k}^{0}$, respectively, as $n\to +\infty $. Suppose that there exists a well-ordered subset $W\subseteq \mathbb{R}_{\geq 0}\times \mathbb{Z} ^{k}$, such that $\min W>\mathrm{0}_{k+1}$ and $\mathrm{Supp} \, (\psi _{n}), \mathrm{Supp} \, (\varphi _{n}) \subseteq W$, $n\in \mathbb{N}$. Then the sequence $(\psi _{n}\circ \varphi _{n})$ converges to $\psi  \circ \varphi $ in the weak topology on $\mathcal{L}_{k}^{0}$. \\
	In particular, the sequences, $(\psi _{n} \circ \varphi )$ and $(\psi  \circ \varphi _{n})$ converge to $\psi  \circ \varphi $.
\end{prop}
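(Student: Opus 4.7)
The plan is to reduce convergence in the weak topology to coefficient-wise convergence. Since the weak topology on $\mathcal{L}_k^0$ is the restriction of the product topology on $\mathbb{R}^{\mathbb{R}\times\mathbb{Z}^k}$, it suffices to show, for each fixed $w\in\mathbb{R}\times\mathbb{Z}^k$, that $[\psi_n\circ\varphi_n]_w\to[\psi\circ\varphi]_w$ in the Euclidean topology. Applying the Taylor formula \eqref{CompEq2} to the parabolic $\varphi_n$ and $\varphi$, one has
\begin{align*}
[\psi_n\circ\varphi_n]_w = [\psi_n]_w + \sum_{i\geq 1}\frac{1}{i!}\bigl[\psi_n^{(i)}(\varphi_n-\mathrm{id})^i\bigr]_w,
\end{align*}
and analogously for $\psi\circ\varphi$; the task is thus to pass to the limit inside this infinite sum.

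The crucial step is a uniform-in-$n$ finite cutoff: I would prove there is an integer $i_0=i_0(w)$, depending only on $w$ and $W$, such that only the summands with $i\leq i_0$ can contribute to $[\psi_n\circ\varphi_n]_w$, for every $n$. The ingredients are a Neumann-type lemma applied to $W$ (since $\min W>\mathbf{0}_{k+1}$ and $W$ is well-ordered, the semigroup $\langle W\rangle$ in $\mathbb{R}\times\mathbb{Z}^k$ is well-ordered, and each element admits only finitely many decompositions as a sum of members of $W$), together with the bookkeeping that $\frac{d}{dz}$ drops the $z$-order of a support point by exactly $1$, whereas $\mathrm{Supp}(\varphi_n-\mathrm{id})\subseteq W\setminus\{(1,\mathbf{0}_k)\}$ is lex-bounded below by some fixed $\mu>(1,\mathbf{0}_k)$. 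Hence $\psi_n^{(i)}(\varphi_n-\mathrm{id})^i$ has support lex-bounded below by a quantity whose $z$-coordinate (or, if the $z$-coordinate of $\mu$ equals $1$, its first nonzero $\ell_j$-coordinate) grows linearly in $i$, producing the desired cutoff.

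With the cutoff in hand, $[\psi_n\circ\varphi_n]_w$ is a finite polynomial expression in finitely many coefficients of $\psi_n$ and $\varphi_n$: differentiation acts linearly on coefficients (with explicit shifts in the indices), and each product $[\psi_n^{(i)}(\varphi_n-\mathrm{id})^i]_w$ unfolds into a finite convolution indexed (again by Neumann) by the decompositions of $w$ in $\langle W\rangle$. Since by hypothesis each coefficient of $\psi_n$ and of $\varphi_n$ converges to the corresponding one of $\psi$ and $\varphi$, continuity of polynomial maps on $\mathbb{R}$ gives $[\psi_n\circ\varphi_n]_w\to[\psi\circ\varphi]_w$, proving the main claim. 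The ``in particular'' assertions are immediate by taking one of the sequences constant and enlarging $W$ by $\mathrm{Supp}(\psi)$ or $\mathrm{Supp}(\varphi)$, which remains well-ordered as a finite union of well-ordered sets.

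The main obstacle is precisely this uniform-in-$n$ cutoff $i_0(w)$: the algebraic parts (componentwise convergence, linearity of differentiation, convolutional convergence of products with supports in a common well-ordered set) are routine once the cutoff is known. Verifying that the interplay between iterated differentiation and the parabolic factor $(\varphi_n-\mathrm{id})^i$ yields a bound depending only on $W$, and not on how fast the supports of $\psi_n$ or $\varphi_n$ grow within $W$, is exactly where the hypothesis of a common well-ordered ambient $W$ is essential; without it, the cutoff could depend on $n$ and the exchange of limit and sum would fail.
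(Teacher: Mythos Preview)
Your approach is essentially the one the paper takes: Taylor-expand around $z$, use a Neumann-type finiteness argument to express each coefficient $[\psi_n\circ\varphi_n]_w$ as a fixed polynomial in finitely many coefficients of $\psi_n,\varphi_n$ (independent of $n$), and conclude by continuity of that polynomial.

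Two points in your sketch need sharpening to match the paper's level of rigor. First, the ambient semigroup in which Neumann is applied is not $\langle W\rangle$. Differentiation does more than ``drop the $z$-order by $1$'': it also produces nonnegative log-shifts (since $\frac{d}{dz}\boldsymbol{\ell}_j$ contributes factors $\boldsymbol{\ell}_1\cdots\boldsymbol{\ell}_{j-1}\boldsymbol{\ell}_j^2/z$), and the shift $-(i,\mathbf 0_k)$ from $\psi_n^{(i)}$ must be repackaged with the $i$ factors of $(\varphi_n-\mathrm{id})$. The paper therefore works in $\overline W:=\langle W\cup W_1\cup S\rangle$, where $W_1=\{(\beta-1,\mathbf n):(\beta,\mathbf n)\in W,\ (\beta,\mathbf n)>(1,\mathbf 0_k)\}$ and $S$ is the semigroup generated by $(0,1,0,\ldots,0),\ldots,(0,\ldots,0,1)$; all generators are $>\mathbf 0_{k+1}$, so $\overline W$ is well-ordered and Neumann applies there to give the universal polynomial $P_w$. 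Second, your ``lex lower bound grows linearly in $i$'' heuristic does not by itself produce a cutoff when the $z$-coordinate of $\mu$ equals $1$: the lexicographic order is non-archimedean, so growth in an $\boldsymbol{\ell}_j$-coordinate never eventually dominates a point with larger $z$-coordinate. The finite cutoff $i_0(w)$ is genuinely a consequence of the finite-decomposition property in $\overline W$ (the number of $W_1$-summands in any representation of $w$ is bounded, and that bounds $i$), not of a monotone lower bound. Once you route the argument through $\overline W$, your outline coincides with the paper's proof.
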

\begin{proof}
	First we give a general estimate of the support of composition of two logarithmic transseries whose supports belong to a well-ordered set $W$. Let $S\subseteq \mathbb{R}_{\geq 0}\times \mathbb{Z} ^{k}$ be the semigroup generated with $(0,1,0,\ldots ,0)_{k+1} \ldots , (0,0,\ldots ,0,1)_{k+1}$ and let $W\subseteq \mathbb{R}_{\geq 0}\times \mathbb{Z}^{k}$ be a well-ordered subset, such that $\min W> \mathbf{0}_{k+1}$. Let $g,h \in \mathcal{L}_{k}^{0}$ be such that $\mathrm{Supp} \, (g),\mathrm{Supp} \, (h) \subseteq W$ and let $g_{1}:=g-\mathrm{id}$, $h_{1}:= h-\mathrm{id}$. By the definition of the composition, it follows that:
	\begin{align*}
		g \circ h & = h + g_{1} + \sum _{i\geq 1}\frac{g_{1}^{(i)}}{i!}h_{1}^{i} . 
	\end{align*}
	Hence, every element of $\mathrm{Supp} \, (g\circ h)$ is an element of $\mathrm{Supp} \, (h) \cup \mathrm{Supp} \, (g_{1})$ or can be written in the form:
	\begin{align*}
		& (\beta , \mathbf{n}) + (0,\mathbf{m}) + (\beta _{1}-1,\mathbf{n}_{1})+\cdots + (\beta _{i}-1,\mathbf{n}_{i}) ,
	\end{align*}
	where $(\beta , \mathbf{n}) \in \mathrm{Supp} \, (g_{1})$, $(0,\mathbf{m}) \in S$, $(\beta _{1},\mathbf{n}_{1}),\ldots ,(\beta _{i},\mathbf{n}_{i}) \in \mathrm{Supp} \, (h_{1})$ and $i\in \mathbb{N}_{\geq 1}$.
	
	Let $W_{1}$ be the semigroup generated by $(\beta -1, \mathbf{n})$, for $(\beta , \mathbf{n}) \in W$ such that $(\beta , \mathbf{n})>(1, \mathbf{0}_{k}) $, and let $\overline{W}$ be the semigroup generated by $W\cup W_{1}\cup S$. By the Neumann Lemma (see \cite{Neumann49}, \cite{Dries}, \cite{mrrz1}), it follows that $W_{1}$ and $\overline{W}$ are well-ordered sets. By the above analysis, it follows that $\mathrm{Supp} \, (g\circ h) \subseteq \overline{W}$.
	
	Now, for every $f\in \mathcal{L}_{k}$ such that $\mathrm{Supp} \, (f)\subseteq \overline{W}$ and every $w\in \overline{W}$ we define $\left[ f\right] _{w}$ to be the coefficient of $f$ if $w\in \mathrm{Supp}\, (f)$, and $0$ otherwise. By the Neumann Lemma, for every $w\in \overline{W}$ there exist finitely many tuples with elements in  $W\cup W_{1}\cup S$ whose sum eguals to $w$. Hence, there exist $m,r\in \mathbb{N}$ and $w_{1},\ldots ,w_{m},t_{1},\ldots ,t_{r}\in \overline{W}$, such that $\left[ g\circ h\right] _{w}$ depends only on $[g]_{w_{1}},\ldots , [g]_{w_{m}}, [h]_{t_{1}}, \ldots , [h]_{t_{r}}$. That is, there exists a polynomial $P_{w}$ in $m+r$ variables, with real coefficients, that does not depend on $g$ nor $h$, such that
	\begin{align*}
		[g\circ  h]_{w} & =P_{w}\big( [g]_{w_{1}},\ldots , [g]_{w_{m}}, [h]_{t_{1}}, \ldots , [h]_{t_{r}} \big) .
	\end{align*}
	
	Now we prove the proposition. Recall that $\mathrm{Supp} \, (\psi _{n}), \, \mathrm{Supp} \, (\varphi _{n}) \subseteq W$, for every $n\in \mathbb{N}$. Since $(\psi _{n})\to \psi $ and $(\varphi _{n}) \to \varphi $ in the weak topology, it follows that $\mathrm{Supp} \, (\psi ), \, \mathrm{Supp} \, (\varphi ) \subseteq W$ (see Subsection \ref{SubsectionMetricTopology}). From the consideration above, we conclude that $\mathrm{Supp} \, (\psi \circ \varphi ) \subseteq \overline{W}$ and $\mathrm{Supp} \, (\psi _{n}\circ \varphi _{n}) \subseteq \overline{W}$, for every $n\in \mathbb{N}$. Since the polynomial $P_{w}$, $w\in \overline{W}$, defined above, depends only on $w_{1},\ldots ,w_{m},t_{1},\ldots ,t_{r}\in \overline{W}$ above, we get:
	\begin{align*}
		[\psi _{n}\circ  \varphi _{n}]_{w} &=P_{w}\big( [\psi _{n}]_{w_{1}},\ldots , [\psi _{n}]_{w_{m}}, [\varphi _{n}]_{t_{1}}, \ldots , [\varphi _{n}]_{t_{r}} \big) 
	\end{align*}
	and
	\begin{align*}
		[\psi \circ  \varphi ]_{w} &=P_{w}\big( [\psi ]_{w_{1}},\ldots , [\psi ]_{w_{m}}, [\varphi ]_{t_{1}}, \ldots , [\varphi ]_{t_{r}} \big) .
	\end{align*}
	Since $(\varphi _{n}) \to \varphi $ and $(\psi _{n})\to \psi $ in the weak topology, it follows that $(\left[ \varphi _{n}\right] _{w}) \to \left[ \varphi \right] _{w}$ and $(\left[ \psi _{n}\right] _{w})\to \left[ \psi _{n}\right] _{w}$ in the Euclidean topology. Hence, by the continuity of the polynomial map $P_{w}$, it follows that:
	\begin{align*}
		\lim _{n\to \infty } [\psi _{n}\circ  \varphi _{n}]_{w} & = \lim _{n\to \infty } P_{w}([\psi _{n}]_{w_{1}},\ldots , [\psi _{n}]_{w_{m}}, [\varphi _{n}]_{t_{1}}, \ldots , [\varphi _{n}]_{t_{r}}) \\
		& =P_{w}([\psi ]_{w_{1}},\ldots , [\psi ]_{w_{m}}, [\varphi ]_{t_{1}}, \ldots , [\varphi ]_{t_{r}}) \\
		&= [\psi \circ  \varphi ]_{w} , \quad w\in \overline{W} .
	\end{align*}
	This proves the convergence of the sequence $(\psi _{n}\circ \varphi _{n})_{n}$ to $\psi  \circ \varphi $ in the weak topology on $\mathcal{L}_{k}^{0}$.
\end{proof}

\begin{lem}[Convergence of the B\"ottcher sequence on $\mathrm{id}+z\mathcal{B}_{\geq 1}^{+}$]\label{PropConvergenceFirst}
	Let $f\in \mathcal{L}_{k}^{H}$, $k\in \mathbb{N}$, be such that $f=z^{\alpha }+z^{\alpha }R_{\alpha }+\mathrm{h.o.b.}(z)$, $\alpha \in \mathbb{R}_{>1}$, $R_{\alpha }\in \mathcal{B}_{\geq 1}^{+}\subseteq \mathcal{L}_{k}$, and let $h= \mathrm{id}+zH$, $H \in \mathcal{B}_{\geq 1}^{+}\subseteq \mathcal{L}_{k}$. Then the sequence $(\mathcal{R}_{f}^{\circ n}(\mathrm{id}+zH))_{n}$ converges to the prenormalization $\varphi _{1}$ of $f$ given in the canonical form $\varphi _{1}=\mathrm{id}+zS$, $S\in \mathcal{B}_{\geq 1}^{+}\subseteq \mathcal{L}_{k}$, in the weak topology on the space $\mathrm{id}+z\mathcal{B}_{\geq 1}^{+}\subseteq \mathcal{L}_{k}^{0}$, where $\mathcal{R}_{f}$ is the B\"ottcher operator on $\mathrm{id}+z\mathcal{B}_{\geq 1}^{+}$ defined in \eqref{OperatorR}.
\end{lem}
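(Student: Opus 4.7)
My plan is to reduce the convergence question to the behavior of a conjugation by the monomial $z^{\alpha^n}$. First, iterating the definition of $\mathcal{R}_f$ yields, with $g := z^{\alpha}+z^{\alpha}R_{\alpha}$,
\begin{align*}
\mathcal{R}_f^{\circ n}(h) \;=\; z^{1/\alpha^n}\circ h\circ g^{\circ n}.
\end{align*}
By Proposition~\ref{PropSolutionPrenorm} combined with Lemma~\ref{LemmaPfPrenorm}, the canonical prenormalization $\varphi_1 = \mathrm{id}+zS$ is a fixed point of $\mathcal{R}_f$, which by the definition of $\mathcal{R}_f$ unwinds to $\varphi_1\circ g\circ \varphi_1^{-1} = z^\alpha$. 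Consequently $g^{\circ n} = \varphi_1^{-1}\circ z^{\alpha^n}\circ \varphi_1$, and setting $p := h\circ \varphi_1^{-1} \in \mathcal{L}_k^0$ we obtain
\begin{align*}
\mathcal{R}_f^{\circ n}(h) \;=\; \psi_n\circ \varphi_1,\qquad \psi_n \;:=\; z^{1/\alpha^n}\circ p\circ z^{\alpha^n}.
\end{align*}

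The core step is then to prove that $\psi_n\to \mathrm{id}$ in the weak topology. Writing $p = \mathrm{id}+p_1$ with $\mathrm{ord}_z(p_1)\geq 1$ gives
\begin{align*}
\psi_n \;=\; z\cdot\bigl(1+X_n\bigr)^{1/\alpha^n},\qquad X_n \;:=\; \frac{p_1\circ z^{\alpha^n}}{z^{\alpha^n}}.
\end{align*}
I would compute explicitly that $\boldsymbol{\ell}_1\circ z^{\alpha^n} = \alpha^{-n}\boldsymbol{\ell}_1$ and, inductively, that $\boldsymbol{\ell}_{j+1}\circ z^{\alpha^n}$ is a Neumann-style formal series in $\boldsymbol{\ell}_{\geq j+1}$ whose coefficients grow at most polynomially in $n$. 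From this, the leading $z$-block of $p_1$ contributes to $X_n$ terms with $z$-exponent $0$ whose $\boldsymbol{\ell}$-coefficients grow polynomially in $n$, whereas every higher $z$-block of $p_1$ contributes terms with $z$-exponent $\alpha^n(\beta-1)\to +\infty$, hence vacating every fixed multi-exponent. Expanding
\begin{align*}
(1+X_n)^{1/\alpha^n} \;=\; \sum_{i\geq 0}\binom{1/\alpha^n}{i}X_n^i
\end{align*}
and using that $\binom{1/\alpha^n}{i}\in O(\alpha^{-n})$ for $i\geq 1$, each fixed nonconstant coefficient of $\psi_n/z$ is a finite sum of terms of the form (polynomial in $n$) $\times$ $O(\alpha^{-n})$, which tends to $0$. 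Hence $\psi_n\to \mathrm{id}$ coefficient-wise, i.e.\ in $\mathcal{T}_w$.

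Finally I would verify that $\bigcup_n\mathrm{Supp}(\psi_n)$ is contained in a common well-ordered subset of $\mathbb{R}\times\mathbb{Z}^{\mathbb{N}}$ via the Neumann Lemma, and then Proposition~\ref{PropConvComp} applied to the convergent sequence $(\psi_n)\to\mathrm{id}$ and the constant sequence $\varphi_1$ delivers $\mathcal{R}_f^{\circ n}(h) = \psi_n\circ\varphi_1 \to \mathrm{id}\circ \varphi_1 = \varphi_1$ in $\mathcal{T}_w$. The main obstacle is the coefficient-wise estimate for $\psi_n$: one must simultaneously control the polynomial-in-$n$ growth arising from $\boldsymbol{\ell}_j\circ z^{\alpha^n}$ for $j\geq 2$ and exhibit its domination by the exponential decay $\alpha^{-n}$ from the binomial $\binom{1/\alpha^n}{i}$, while also checking that the supports $\mathrm{Supp}(\psi_n)$ stay inside one well-ordered set as $n$ varies, so that Proposition~\ref{PropConvComp} legitimately applies.
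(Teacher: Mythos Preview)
Your proposal is correct and follows essentially the same route as the paper: conjugate by $\varphi_1$ to reduce to showing $z^{1/\alpha^n}\circ (h\circ\varphi_1^{-1})\circ z^{\alpha^n}\to\mathrm{id}$ in $\mathcal{T}_w$, expand via the binomial series, use $\bigl|\binom{1/\alpha^n}{i}\bigr|\leq \alpha^{-n}$ together with the polynomial-in-$n$ growth of the coefficients of $\boldsymbol{\ell}_m\circ z^{\alpha^n}$, invoke the Neumann Lemma for finiteness at each fixed monomial, and conclude via Proposition~\ref{PropConvComp}.

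One simplification you missed: since $h,\varphi_1\in \mathrm{id}+z\mathcal{B}_{\geq 1}^{+}$ and this set is closed under composition and inversion, $p=h\circ\varphi_1^{-1}$ also lies in $\mathrm{id}+z\mathcal{B}_{\geq 1}^{+}$, so $p_1=zK$ with $K\in\mathcal{B}_{\geq 1}^{+}$ has \emph{only} the $z^1$-block---there are no higher $z$-blocks of $p_1$ to track, and $X_n=K(z^{\alpha^n})$ has $z$-order exactly $0$ for all $n$. The paper uses this and works directly in $\mathrm{id}+z\mathcal{B}_{\geq 1}^{+}$, which also makes the common well-ordered support set immediate and sidesteps the drifting $z$-exponents $\alpha^n(\beta-1)$ in your sketch.
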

\begin{proof}
	Since $\varphi _{1}\circ (z^{\alpha }+z^{\alpha }R_{\alpha })\circ \varphi _{1}^{-1}=z^{\alpha }$, note that
	\begin{align*}
		\mathcal{R}_{f}^{\circ n}(h) \circ \varphi _{1}^{-1} & = z^{\frac{1}{\alpha ^{n}}} \circ (h\circ \varphi _{1}^{-1})\circ \big( \varphi _{1}\circ (z^{\alpha }+z^{\alpha }R_{\alpha })\circ \varphi _{1}^{-1} \big) ^{\circ n} \\
		& = z^{\frac{1}{\alpha ^{n}}} \circ (h\circ \varphi _{1}^{-1})\circ z^{\alpha ^{n}} , 
	\end{align*}
	for $n\in \mathbb{N}$. By Proposition~\ref{PropConvComp}, it follows that $(\mathcal{R}_{f}^{\circ n}(\mathrm{id}+zH))_{n}$ converges to $\varphi _{1}$ if and only if $(z^{\frac{1}{\alpha ^{n}}} \circ (h\circ \varphi _{1}^{-1})\circ z^{\alpha ^{n}})_{n}$ converges to $\mathrm{id}$ in the weak topology on the space $\mathrm{id}+z\mathcal{B}_{\geq 1}^{+}$. Now suppose that $h\circ \varphi _{1}^{-1} = \mathrm{id}+zK$, $K\in \mathcal{B}_{\geq 1}^{+}\subseteq \mathcal{L}_{k}$. We prove that $(z^{\frac{1}{\alpha ^{n}}} \circ (\mathrm{id}+zK)\circ z^{\alpha ^{n}})_{n}$ converges to $\mathrm{id}$ in the weak topology on $\mathrm{id}+z\mathcal{B}_{\geq 1}^{+}\subseteq \mathcal{L}_{k}^{0}$. Note that:
	\begin{align}
		z^{\frac{1}{\alpha ^{n}}} \circ (\mathrm{id}+zK) \circ z^{\alpha ^{n}} &= (z^{\alpha ^{n}}+z^{\alpha ^{n}}K(z^{\alpha ^{n}}))^{\frac{1}{\alpha ^{n}}} \nonumber \\
		&= \mathrm{id}+z\sum _{i\geq 1}{\frac{1}{\alpha ^{n}} \choose i}(K(z^{\alpha ^{n}}))^{i} . \label{Identity1}
	\end{align}
	Note that:
	\begin{align}
		{\frac{1}{\alpha ^{n}} \choose i} &= \frac{\frac{1}{\alpha ^{n}}\cdot (\frac{1}{\alpha ^{n}}-1)\cdots (\frac{1}{\alpha ^{n}}-(i-1))}{i!} \nonumber \\
		&= \frac{1}{i\alpha ^{n}}\cdot \frac{\frac{1}{\alpha ^{n}}-1}{1}\cdots \frac{\frac{1}{\alpha ^{n}}-(i-1)}{i-1} , \label{Identity2}
	\end{align}
	for $i\in \mathbb{N}_{\geq 1}$ and $n\in \mathbb{N}$. Since, $0< \frac{j-\frac{1}{\alpha ^{n}}}{j} < 1$, $j\in \left\lbrace 1, \ldots , i-1\right\rbrace $, it follows that:
	\begin{align}
		\left| {\frac{1}{\alpha ^{n}} \choose i}\right| \leq \frac{1}{\alpha ^{n}} , \label{Identity3}
	\end{align}
	for $i\in \mathbb{N}_{\geq 1}$ and $n\in \mathbb{N}$. By Lemma~$A.3.2$ in \cite{Peran21Thesis}, it follows that every coefficient of $\boldsymbol{\ell }_{m}(z^{\alpha ^{n}})$, $m\in \left\lbrace 1, \ldots ,k\right\rbrace $, is a polynomial in the variables $\frac{1}{\alpha ^{n}}$ and $n\log \alpha $, for $n\in \mathbb{N}$. Let $\mathbf{n} \in \mathbb{Z} ^{k}$, such that $\mathbf{n} > \mathbf{0}_{k}$. Thus, for $n\in \mathbb{N}$, it follows that $\left[ (K(z^{\alpha ^{n}}))^{i}\right] _{(1, \mathbf{n})}$, $i\in \mathbb{N}_{\geq 1} $, is the value $P_{i}(\frac{1}{\alpha ^{n}},n\log \alpha )$ of the polynomial $P_{i}$ in the variables $\frac{1}{\alpha ^{n}}$ and $n\log \alpha $, whose coefficients do not depend on $n$. By the Neumann Lemma and \eqref{Identity1}, there exist $m\in \mathbb{N}$ and $i_{1},\ldots ,i_{m} \in \mathbb{N}_{\geq 1} $, such that:
	\begin{align}
		\left[ ((\mathrm{id}+zK)(z^{\alpha ^{n}}))^{\frac{1}{\alpha ^{n}}}\right] _{1,\mathbf{n}} &= {\frac{1}{\alpha ^{n}} \choose i_{1}}P_{i_{1}} \Big( \frac{1}{\alpha ^{n}}, \, n\log \alpha \Big) + \cdots + {\frac{1}{\alpha ^{n}} \choose i_{m}}P_{i_{m}}\Big( \frac{1}{\alpha ^{n}}, \, n\log \alpha \Big) , \label{Identity6}
	\end{align}
	for every $n\in \mathbb{N}$. From \eqref{Identity3} and \eqref{Identity6}, it follows that:
	\begin{align*}
		& \left| \left[ ((\mathrm{id}+zK)(z^{\alpha ^{n}}))^{\frac{1}{\alpha ^{n}}}\right] _{1,\mathbf{n}}\right| \\
		& \leq \left| {\frac{1}{\alpha ^{n}} \choose i_{1}}\right| \left| P_{i_{1}}\Big( \frac{1}{\alpha ^{n}}, \, n\log \alpha \Big) \right| + \cdots + \left| {\frac{1}{\alpha ^{n}} \choose i_{m}}\right| \left| P_{i_{m}}\Big( \frac{1}{\alpha ^{n}}, \, n\log \alpha \Big) \right| \\
		& \leq \frac{1}{\alpha ^{n}}\Bigg( \sum _{j=1}^{m}\left| P_{i_{j}}\Big( \frac{1}{\alpha ^{n}}, \, n\log \alpha \Big) \right| \Bigg) ,
	\end{align*}
	for $n\in \mathbb{N}$. Now, passing to the limit as $n$ goes to infinity, we get:
	\begin{align}
		\left| \lim _{n\to \infty } \left[ ((\mathrm{id}+zK)(z^{\alpha ^{n}}))^{\frac{1}{\alpha ^{n}}}\right] _{1,\mathbf{n}} \right| \leq \lim _{n\to \infty }\frac{\sum _{j=1}^{m}\left| P_{i_{j}}(\frac{1}{\alpha ^{n}}, \, n\log \alpha )\right| }{\alpha ^{n}} =0. \label{Identity4}
	\end{align}
	From \eqref{Identity4} and $\left[ ((\mathrm{id}+zK)(z^{\alpha ^{n}}))^{\frac{1}{\alpha ^{n}}} \right] _{1, \mathbf{0}_{k}}=1$, for each $n\in \mathbb{N}$, it follows that the sequence $(z^{\frac{1}{\alpha ^{n}}} \circ (\mathrm{id}+zK)\circ z^{\alpha ^{n}})_{n}$ converges to $\mathrm{id}$ in the weak topology on the space $\mathrm{id}+z\mathcal{B}_{\geq 1}^{+}\subseteq \mathcal{L}_{k}^{0}$.
\end{proof}

\medskip

\begin{proof}[Proof of statement 2 of Theorem~A]
	Let $\varphi =\varphi _{2}\circ \varphi _{1}$ be a unique solution of the normalization equation \eqref{EqNorm}, where $\varphi _{1}$ is the canonical solution of a prenormalization equation \eqref{EqPreNorm}, and $\varphi _{2}$ is the fixed point of the B\"ottcher operator $\mathcal{P}_{\varphi _{1}\circ f\circ \varphi _{1}^{-1}}$ defined as in \eqref{DefinitionPf}. Let $h \in \mathfrak L^{0}$ be arbitrary. Suppose that $m\in \mathbb{N}$ is minimal such that $f,h\in \mathcal{L}_{m}$. For $n\in \mathbb{N}$, it follows that:
	\begin{align}
		z^{\frac{1}{\alpha ^{n}}}\circ h\circ f^{\circ n} &= z^{\frac{1}{\alpha ^{n}}}\circ (h\circ \varphi _{1}^{-1}) \circ (\varphi _{1}\circ f\circ \varphi _{1}^{-1})^{\circ n} \circ \varphi _{1} . \label{Equati1}
	\end{align}
	Put $h\circ \varphi _{1}^{-1}=\mathrm{id}+zK+g$, where $g\in \mathcal{L}_{m}$ such that $\mathrm{ord}_{z} \, (g)>1$, and $K\in \mathcal{B}_{\geq 1}^{+}\subseteq \mathcal{L}_{m}$. Put $h_{1}:=\mathrm{id}+zK$ and $h_{2}:=h_{1}^{-1}\circ h\circ \varphi _{1}^{-1}$. Note that $h\circ \varphi _{1}^{-1}=h_{1}\circ h_{2}$, $\mathrm{ord}_{z} \, (h_{2}-\mathrm{id}) > 1$, and $\mathrm{ord}_{z} \, (h_{1}-\mathrm{id}) =1$, if $h_{1} \neq \mathrm{id}$. By \eqref{Equati1}, it follows that:
	\begin{align}
		z^{\frac{1}{\alpha ^{n}}}\circ h\circ f^{\circ n} &= \big( z^{\frac{1}{\alpha ^{n}}}\circ h_{1} \circ z^{\alpha ^{n}} \big) \circ \big( z^{\frac{1}{\alpha ^{n}}}\circ h_{2} \circ (\varphi _{1}\circ f\circ \varphi _{1}^{-1})^{\circ n} \big) \circ \varphi _{1} \nonumber \\
		& = \big( z^{\frac{1}{\alpha ^{n}}}\circ h_{1} \circ z^{\alpha ^{n}} \big) \circ \mathcal{P}_{\varphi _{1}\circ f\circ \varphi _{1}^{-1}}^{\circ n}(h_{2}) \circ \varphi _{1} . \label{Equati22}
	\end{align}
	In Subsection \ref{SubsectionProofOfStat3} we prove that the supports $\mathrm{Supp}\big( z^{\frac{1}{\alpha ^{n}}}\circ f^{\circ n} \big) $, $n\in \mathbb{N}$, are contained in a common well-ordered subset of $\mathbb{R}_{\geq 0}\times \mathbb{Z}^{m}$ whose minimum is strictly bigger than $\mathbf{0}_{m+1}$. Following that idea, it is easy to prove that the supports of $z^{\frac{1}{\alpha ^{n}}}\circ h_{1}\circ z^{\alpha ^{n}}$ and $\mathcal{P}^{\circ n}_{\varphi _{1}\circ f\circ \varphi _{1}^{-1}}(h_{2})$, for $n\in \mathbb{N}$, are subsets of a common well-ordered subset of $\mathbb{R}_{\geq 0}\times \mathbb{Z}^{m}$ whose minimum is strictly bigger than $\mathbf{0}_{m+1}$. By the proof of Lemma~\ref{PropConvergenceFirst}, and by Proposition~\ref{PropContractibility} and Proposition~\ref{PropConvComp}, it follows that the B\"ottcher sequence $(z^{\frac{1}{\alpha ^{n}}}\circ h\circ f^{\circ n})_{n}$ converges to $\varphi =\varphi _{2}\circ \varphi _{1}$ in the weak topology on $\mathfrak L^{0}$. \\
	
	Now, suppose that $\mathrm{Lb}_{z} \, (h) = \mathrm{Lb}_{z} \, (\varphi )$. Let $\varphi :=\varphi _{2}\circ \varphi _{1}$, where $\varphi _{1}$ is the canonical prenormalization, if $\mathrm{ord}_{z} \, (f-z^{\alpha })=\alpha $, or $\varphi _{1}:=\mathrm{id}$, if $\mathrm{ord}_{z}\, (f-z^{\alpha }) > \alpha $, and $\varphi _{2}$ is the normalization of $\varphi _{1}\circ f\circ \varphi _{1}^{-1}$. Now, since $\mathrm{ord}_{z} \, (\varphi _{2}) > 1$, it follows that $\mathrm{Lb}_{z} \, (\varphi ) = \varphi _{1}$. Thus, $\mathrm{Lb}_{z} \, (h) = \varphi _{1}$. Therefore, $\mathrm{ord}_{z} \, (h\circ \varphi _{1}^{-1} -\mathrm{id}) > 1$. By \eqref{Equati1}, it follows that
	\begin{align*}
		z^{\frac{1}{\alpha ^{n}}}\circ h\circ f^{\circ n} &= \mathcal{P}_{\varphi _{1}\circ f\circ \varphi _{1}^{-1}} ^{\circ n} (h\circ \varphi _{1}^{-1}) \circ \varphi _{1} .
	\end{align*}
	The B\"ottcher operator $\mathcal{P}_{\varphi _{1}\circ f\circ \varphi _{1}^{-1}}$ is a contraction on the space $(\mathcal{L}_{m}^{\beta },d_{z})$, for each $\beta >1$. Since $\mathrm{ord}_{z} \, (h\circ \varphi _{1}^{-1} -\mathrm{id}) > 1$, by the Banach Fixed Point Theorem, the sequence of the Picard iterations $(\mathcal{P}^{\circ n}_{\varphi _{1}\circ f\circ \varphi _{1}^{-1}}(h\circ \varphi _{1}^{-1}))_{n}$ converges to $\varphi _{2}$ in the power-metric topology on the space $\mathrm{id}+\mathcal{L}_{m}^{\beta }$. By statement 3 of Lemma~$2.2.17$ in \cite{Peran21Thesis}, it follows that $(z^{\frac{1}{\alpha ^{n}}}\circ h\circ f^{\circ n})_{n}$ converges to $\varphi =\varphi _{2}\circ \varphi _{1}$ in the power-metric topology. \\
	Conversely, suppose that $(z^{\frac{1}{\alpha ^{n}}}\circ h\circ f^{\circ n})_{n}$ converges to the normalization $\varphi :=\varphi _{2}\circ \varphi _{1}$ in the power-metric topology and that $\mathrm{Lb}_{z} \, (\varphi ) \neq \mathrm{Lb}_{z} \, (h)$, where $\varphi _{1}$ is the canonical prenormalization and $\varphi _{2}$ is the normalization of prenormalized transseries $\varphi _{1}\circ f\circ \varphi _{1}^{-1}$. By statement 2 of Lemma~$2.2.17$ in \cite{Peran21Thesis}, it follows that $\mathrm{Lb}_{z} \, (h\circ \varphi _{1}^{-1}) \neq \mathrm{id}$. Let $\mathrm{Lb}_{z} \, (h\circ \varphi _{1}^{-1})=\mathrm{id}+zK$, for $K\in \mathcal{B}_{\geq 1}^{+}\subseteq \mathcal{L}_{m}$, $K\neq 0$. Put $h_{1}:=\mathrm{id}+zK$ and $h_{2}:=h_{1}^{-1}\circ h\circ \varphi _{1}^{-1}$. Consequently, it follows that $\mathrm{ord}_{z} \, (h_{2}-\mathrm{id}) > 1$ and $h\circ \varphi _{1}^{-1} = h_{1}\circ h_{2}$. Now, by \eqref{Equati22} and statement 3 of Lemma~$2.2.17$ in \cite{Peran21Thesis}, it follows that $\Big( (z^{\frac{1}{\alpha ^{n}}}\circ h_{1}\circ z^{\alpha ^{n}})\circ \mathcal{P}_{\varphi _{1}\circ f\circ \varphi _{1}^{-1}}^{\circ n}(h_{2})\Big) _{n}$ converges to $\varphi _{2}=\varphi \circ \varphi _{1}^{-1}$ in the power-metric topology. Note that
	\begin{align*}
		(z^{\frac{1}{\alpha ^{n}}}\circ h_{1}\circ z^{\alpha ^{n}})\circ \mathcal{P}_{\varphi _{1}\circ f\circ \varphi _{1}^{-1}}^{\circ n}(h_{2}) & = z^{\frac{1}{\alpha ^{n}}}\circ h_{1}\circ z^{\alpha ^{n}} + \mathrm{h.o.b.}(z) ,
	\end{align*}
	for every $n\in \mathbb{N}$. Since $\mathrm{ord}_{z} \, (\varphi _{2}) > 1$, by the Taylor Theorem (see \cite[Proposition 3.3]{PRRSFormal21}) and the convergence in the power-metric topology, it follows that there exists $n_{0}\in \mathbb{N}$ such that $z^{\frac{1}{\alpha ^{n}}} \circ h_{1}\circ z^{\alpha ^{n}} = \mathrm{id}$, for every $n\geq n_{0}$. By \eqref{Identity1} and Lemma~$A.3.2$ in \cite{Peran21Thesis}, it follows that
	\begin{align*}
		\mathrm{Lt} \, \big( z^{\frac{1}{\alpha ^{n}}} \circ h_{1}\circ z^{\alpha ^{n}} - \mathrm{id} \big) &= \left( \frac{1}{\alpha ^{n}}\right) ^{1+\mathrm{ord}_{\boldsymbol{\ell }_{1}} \, (K)}\mathrm{Lt} \, (zK) ,
	\end{align*}
	for each $n\in \mathbb{N}$. Since $z^{\frac{1}{\alpha ^{n}}} \circ h_{1}\circ z^{\alpha ^{n}}=\mathrm{id}$, for every $n\geq n_{0}$, it follows that $K=0$, i.e., $\mathrm{Lb}_{z} \, (\varphi ) = \mathrm{Lb}_{z} \, (h)$.
\end{proof}

\medskip

\subsection[Proof of statement 3 of Theorem~A]{Proof of statement 3 of Theorem~A (Control of the support of the normalization)}\label{SubsectionProofOfStat3}

\begin{lem}[Control of the support of a composition]\label{PropContolSuportComp}
	Let $f\in \mathfrak L$ and $g\in \mathfrak L^{H}$. Let $r\in \mathbb{N}$ be minimal such that $f,g\in \mathcal{L}_{r}$. Let $g:=\lambda z^{\alpha }+ \mathrm{h.o.t.}$, for $\alpha , \lambda >0$, and $g_{1}:=g-z^{\alpha }$. Then the support $\mathrm{Supp} \, (f\circ g)$ is contained in the sub-semigroup of $\mathbb{R}_{\geq 0}\times \mathbb{Z}^{r}$ generated by the elements:
	\begin{align*}
		& (0,1,0,\ldots ,0)_{r+1},\ldots ,(0,0,\ldots ,0,1)_{r+1}, \\
		& (\alpha \delta , \mathbf{m}), \textrm{ for each } (\delta ,\mathbf{m})\in \mathrm{Supp} \, (f) , \\
		& (\beta -\alpha ,\mathbf{n}) , \textrm{ for each } (\beta ,\mathbf{n})\in \mathrm{Supp} \, (g_{1}) .
	\end{align*}
\end{lem}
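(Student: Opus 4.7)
The plan is to compute $f\circ g$ termwise via formulas~\eqref{CompEq1}--\eqref{CompEq2}, tracking supports through each substitution and invoking the Neumann Lemma (as in the proof of Proposition~\ref{PropConvComp}) to handle the infinite sums and unions. The useful multiplicative decomposition is $g=\lambda z^{\alpha }\cdot h$ with $h:=1+g_{1}/(\lambda z^{\alpha })$: since $\mathrm{Supp}\, (g_{1})>(\alpha ,\mathbf{0}_{r})$, the set
\[
S':=\{(\beta -\alpha ,\mathbf{n}):(\beta ,\mathbf{n})\in \mathrm{Supp}\, (g_{1})\}\subseteq \mathbb{R}_{\geq 0}\times \mathbb{Z}^{r}
\]
contains $\mathrm{Supp}\, (h-1)$. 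Writing $\mathbf{e}_{j}\in \mathbb{Z}^{r}$ for the $j$-th standard basis vector, the sub-monoid $\Sigma $ of $\mathbb{R}_{\geq 0}\times \mathbb{Z}^{r}$ generated by $S'$ together with $(0,\mathbf{e}_{1}),\ldots ,(0,\mathbf{e}_{r})$ is well-ordered by the Neumann Lemma.

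Fix a single term $a_{\delta ,\mathbf{m}}z^{\delta }\boldsymbol{\ell}_{1}^{m_{1}}\cdots \boldsymbol{\ell}_{r}^{m_{r}}$ of $f$. Its image under $z\mapsto g$ equals $a_{\delta ,\mathbf{m}}\cdot g^{\delta }\cdot \prod _{j}\boldsymbol{\ell}_{j}(g)^{m_{j}}$. From~\eqref{CompEq1} and the binomial expansion, $g^{\delta }=\lambda ^{\delta }z^{\alpha \delta }h^{\delta }$ with $\mathrm{Supp}\, (h^{\delta })\subseteq \Sigma $. For the logarithmic factors I will establish by induction on $j$ the structural claim
\[
\boldsymbol{\ell}_{j}(g)=c_{j}\,\boldsymbol{\ell}_{j}+R_{j},\quad \mathrm{Supp}\, (R_{j})\subseteq (0,\mathbf{e}_{j})+\Sigma ,
\]
with nonzero constants $c_{1}=1/\alpha $ and $c_{j}=1$ for $j\geq 2$. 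The base case uses $\log g=\log \lambda -\alpha \boldsymbol{\ell}_{1}^{-1}+\log h$ and a geometric-series expansion of $\boldsymbol{\ell}_{1}(g)=-1/\log g$. The inductive step applies $\log \boldsymbol{\ell}_{j}=-\boldsymbol{\ell}_{j+1}^{-1}$ together with the inductive form of $\boldsymbol{\ell}_{j}(g)$ to write $\log \boldsymbol{\ell}_{j}(g)=-\boldsymbol{\ell}_{j+1}^{-1}+(\text{corrections with support in }\Sigma )$, after which $\boldsymbol{\ell}_{j+1}(g)=-1/\log \boldsymbol{\ell}_{j}(g)$ is expanded by the same geometric-series trick.

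Given the structural claim, raising $\boldsymbol{\ell}_{j}(g)$ to the integer power $m_{j}\in \mathbb{Z}$ (via the reciprocal series when $m_{j}<0$) and multiplying across $j$ shows
\[
\mathrm{Supp}\Big(\prod _{j}\boldsymbol{\ell}_{j}(g)^{m_{j}}\Big)\subseteq (0,\mathbf{m})+\Sigma .
\]
Combining with $z^{\alpha \delta }h^{\delta }$ places the whole substituted term's support inside $(\alpha \delta ,\mathbf{m})+\Sigma $, which is contained in the sub-semigroup of the lemma. Finally, summing over the well-ordered index set $\mathrm{Supp}\, (f)$ and invoking the Neumann Lemma once more to gather the family of contributions yields $\mathrm{Supp}\, (f\circ g)$ inside the sub-semigroup generated by $(\alpha \delta ,\mathbf{m})$ for $(\delta ,\mathbf{m})\in \mathrm{Supp}\, (f)$, $(\beta -\alpha ,\mathbf{n})$ for $(\beta ,\mathbf{n})\in \mathrm{Supp}\, (g_{1})$, and $(0,\mathbf{e}_{1}),\ldots ,(0,\mathbf{e}_{r})$.

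The main obstacle is the inductive structural claim for $\boldsymbol{\ell}_{j}(g)$: one must verify at each stage that $\log \boldsymbol{\ell}_{j}(g)$ has exactly $-\boldsymbol{\ell}_{j+1}^{-1}$ as its only negatively supported part, so that the reciprocal expands cleanly as $\boldsymbol{\ell}_{j+1}+(\text{terms with strictly positive }\boldsymbol{\ell}_{j+1}\text{-exponent in }\Sigma )$. Once this is in place, the remaining work is standard bookkeeping with the Neumann Lemma and the well-ordering of $\Sigma $.
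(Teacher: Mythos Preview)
Your approach is correct but takes a different organisational route from the paper. The paper applies the Taylor expansion
\[
f\circ g=f(\lambda z^{\alpha})+\sum_{i\geq 1}\frac{f^{(i)}(\lambda z^{\alpha})}{i!}g_{1}^{i}
\]
directly, then invokes two lemmas from the thesis (Lemmas~A.3.1 and A.3.2) which assert that every element of $\mathrm{Supp}\,\big(f^{(i)}(\lambda z^{\alpha})\big)$ has the form $((\delta -i)\alpha ,\mathbf{v})+(0,\mathbf{u})$ with $(\delta ,\mathbf{v})\in \mathrm{Supp}\,(f)$ and $(0,\mathbf{u})$ an $\mathbb{N}$-combination of the $(0,\mathbf{e}_{j})$; the factor $g_{1}^{i}$ then supplies the shifts $(\beta _{1}-\alpha ,\mathbf{n}_{1})+\cdots +(\beta _{i}-\alpha ,\mathbf{n}_{i})$. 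Your multiplicative factorisation of $g$ followed by an inductive structural analysis of $\boldsymbol{\ell}_{j}(g)$ is essentially what those cited lemmas do internally, so you are unpacking what the paper outsources. Your route is more self-contained and makes the iterated-logarithm bookkeeping explicit; the paper's route is shorter precisely because it defers that bookkeeping to the thesis.

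One slip to correct: with $g_{1}:=g-z^{\alpha}$ as in the statement, your identity $g=\lambda z^{\alpha}\cdot h$ with $h=1+g_{1}/(\lambda z^{\alpha})$ fails when $\lambda \neq 1$ (the right-hand side equals $g+(\lambda -1)z^{\alpha}$), and the claim ``$\mathrm{Supp}\,(g_{1})>(\alpha ,\mathbf{0}_{r})$'' should read ``$\geq $''. The clean fix is either to write $g=z^{\alpha}\big(1+g_{1}/z^{\alpha}\big)$ and note that the constant term of the bracket is $\lambda$, or to use $g-\lambda z^{\alpha}$ in place of $g_{1}$ in your $h$ and observe $\mathrm{Supp}\,(g-\lambda z^{\alpha})\subseteq \mathrm{Supp}\,(g_{1})$. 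This is cosmetic: the argument goes through verbatim for $\lambda =1$, which is the only case actually used in the proof of statement~3 of Theorem~A.
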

\begin{proof}
	By the definition of a composition we have:
	\begin{align}
		f\circ g & = f(\lambda z^{\alpha }) + \sum _{i\geq 1}\frac{f^{(i)}(\lambda z^{\alpha })}{i!}g_{1}^{i} . \label{EqCompSuport}
	\end{align}
	By Lemma~$A.3.1$ and Lemma~$A.3.2$ in \cite{Peran21Thesis}, it follows that every element $(\rho , \mathbf{r})$ of the support of $f^{(i)}(\lambda z^{\alpha })$, $i\in \mathbb{N}$, can be obtained as:
	\begin{align}
		(\rho , \mathbf{r}) & = ((\delta -i)\alpha , \mathbf{v}) + (0,\mathbf{u}) , \label{EqCompSup}
	\end{align}
	where $(\delta , \mathbf{v}) \in \mathrm{Supp} \, (f)$ and $(0,\mathbf{u})$ is a linear combination (with coefficients in $\mathbb{N}$) of elements $(0,1,0,\ldots ,0)_{r+1},\ldots ,(0,0,\ldots ,0,1)_{r+1}$.
	
	From \eqref{EqCompSup} we conclude that every element $(\gamma , \mathbf{m})$ of the support of the sum on the right-hand side of \eqref{EqCompSuport} can be obtained in the following way:
	\begin{align*}
		(\gamma , \mathbf{m}) & = ((\delta -i)\alpha , \mathbf{v}) + (0,\mathbf{u}) + (\beta _{1},\mathbf{n}_{1}) + \cdots + (\beta _{i},\mathbf{n}_{i}) \\
		& = (\delta \alpha , \mathbf{v}) + (0,\mathbf{u}) + (\beta _{1}-\alpha ,\mathbf{n}_{1}) + \cdots + (\beta _{i}-\alpha ,\mathbf{n}_{i}) ,
	\end{align*}
	where $i\in \mathbb{N}_{\geq 1}$, $(\delta , \mathbf{v}) \in \mathrm{Supp} \, (f)$, $(0,\mathbf{u})$ is a linear combination (with coefficients in $\mathbb{N}$) of $(0,1,0,\ldots ,0)_{r+1}, \ldots , (0,0,\ldots ,0,1)_{r+1}$, and $(\beta _{j},\mathbf{n}_{j})\in \mathrm{Supp} \, (g_{1})$, $1\leq j\leq i$.
\end{proof}

\begin{proof}[Proof of statement 3 of Theorem~A]
	Let $f\in\mathcal{L}_{k}^{H}$ such that $f=z^{\alpha }+\mathrm{h.o.t.}$, for $\alpha \in \mathbb{R}_{>1}$. Let $W$ be the set of elements $(\alpha^{p} ,\mathbf{0}_{k})$, for $p\in \mathbb{N}$, $(0,1,0,\ldots ,0)_{k+1}$,$...$, $(0,0,\ldots ,0,1)_{k+1}$, and
	\begin{align}
		& (\alpha ^{m}(\gamma - \alpha ), \mathbf{n}) , \label{StrongHyperSupport1}
	\end{align}
	for $(\gamma , \mathbf{n}) \in \mathrm{Supp} \, (f-z^{\alpha })$, $m\in \mathbb{N}$. Since $\mathrm{Supp}\, (f-z^{\alpha })$ is a well-ordered set and $\alpha >1$, it is easy to see that $W$ is a well-ordered set. Now, the semigroup $\left\langle W\right\rangle $ generated by $W$ is well-ordered by the Neumann Lemma.
	
	Note that $\mathrm{Supp}\, (f) \subseteq \left\langle W\right\rangle $ and
	\begin{align}
		(\alpha \delta , \mathbf{n}) \in \left\langle W\right\rangle , \label{PropertyW}
	\end{align}
	for every $(\delta , \mathbf{n})\in \left\langle W\right\rangle $.
	
	Since, by statement 2 of Theorem~A (taking $h:=\mathrm{id}$), the sequence $(z^{\frac{1}{\alpha ^{n}}}\circ f^{\circ n})_{n}$ converges to $\varphi $ in the weak topology, it is sufficient to prove that $\mathrm{Supp}\, (z^{\frac{1}{\alpha ^{n}}}\circ f^{\circ n})\subseteq \left\langle W\right\rangle $, for each $n\in \mathbb{N}_{\geq 1} $.
	
	By \eqref{CompEq1} it follows that:
	\begin{align}
		z^{\frac{1}{\alpha ^{n}}}\circ f^{\circ n} &= \mathrm{id}+z\sum _{i\geq 1}{\frac{1}{\alpha ^{n}} \choose i}\Big(\frac{f^{\circ n}-z^{\alpha ^{n}}}{z^{\alpha ^{n}}}\Big)^{i}, \quad n\in \mathbb{N}_{\geq 1} .
	\end{align}
	Therefore, in order to prove that
	\begin{align}
		& \mathrm{Supp}\, (z^{\frac{1}{\alpha ^{n}}}\circ f^{\circ n}) \subseteq \left\langle W\right\rangle , \label{InductiveIdentity}
	\end{align}
	for $n\in \mathbb{N}_{\geq 1}$, it is sufficient to prove that $(\gamma - \alpha ^{n},\mathbf{n}) \in \left\langle W\right\rangle $, for every $(\gamma ,\mathbf{n}) \in \mathrm{Supp} \, (f^{\circ n}-z^{\alpha ^{n}})$, $n\in \mathbb{N}_{\geq 1}$. We prove it inductively. By the definition of $\left\langle W\right\rangle $ it is clear that the statement holds for $n=1$. Suppose that the statement holds for some $n\in \mathbb{N}_{\geq 1}$. By the proof of Lemma~\ref{PropContolSuportComp} every element of the support of $f^{\circ (n+1)}=f^{\circ n}\circ f$, $n\in \mathbb{N}_{\geq 1}$, is of the form:
	\begin{align*}
		& (\delta \alpha , \mathbf{v}) + (0,\mathbf{u}) ,
	\end{align*}
	or
	\begin{align*}
		& (\delta \alpha , \mathbf{v}) + (0,\mathbf{u}) + (\beta _{1}-\alpha ,\mathbf{n}_{1}) + \cdots + (\beta _{i}-\alpha ,\mathbf{n}_{i}) ,
	\end{align*}
	for $i\in \mathbb{N}_{\geq 1}$, $(\delta , \mathbf{v}) \in \mathrm{Supp} \, (f^{\circ n})$, $(0,\mathbf{u})$ is a linear combination (with coefficients in $\mathbb{N}_{\geq 1}$) of $(0,1,0,\ldots ,0)_{k+1}, \ldots , (0,0,\ldots ,0,1)_{k+1}$, and $(\beta _{j},\mathbf{n}_{j})\in \mathrm{Supp} \, (f-z^{\alpha })$, $1\leq j\leq i$. From this fact, the assumption of the induction and \eqref{PropertyW}, it follows that $(\gamma - \alpha ^{n+1},\mathbf{n}) \in \left\langle W\right\rangle $, for every $(\gamma ,\mathbf{n}) \in \mathrm{Supp} \, (f^{\circ (n+1)}-z^{\alpha ^{n+1}})$. This proves \eqref{InductiveIdentity}.
\end{proof}

\medskip

\section{Analytic normalization of analytic maps on admissible domains}\label{SectionNormalizationAnalytic}

In this section we consider analytic normalizations of analytic maps on complex domains with strongly hyperbolic logarithmic asymptotics. More precisely, for analytic map $f=\alpha \zeta +o(1)$, $\alpha \in \mathbb{R}_{>1}$, satisfying certain logarithmic asymptotics on some domain $D\subseteq \mathbb{C}$, we seek for an analytic parabolic solution $\varphi (\zeta )=\zeta +o(1)$ of the equation
\begin{align*}
	\varphi (f(\zeta )) & = \alpha \cdot \varphi (\zeta )
\end{align*}
on a suitable $f$-invariant subdomain of the domain $D$. In Definition~\ref{DefinitionAdmissibleDom} in Subsection~\ref{SubsectionAdmissibleDom} we introduce the so-called \emph{admissible domains}, with the property that they contain $f$-invariant subdomains.

The main result of this section is Theorem~B which can be viewed as an analogue of \cite[Theorem~A]{PRRSDulac21} for maps $f=\alpha \zeta +o(1)$, and as a generalization of the classical B\"ottcher Theorem (see e.g. \cite{CarlesonG93}, \cite{Milnor06}). As opposed to \cite[Theorem~A]{PRRSDulac21}, where only analytic maps with certain hyperbolic logarithmic asymptotics are analytically linearizable, in Theorem~B we prove that any analytic map with strongly hyperbolic logarithmic asymptotics can be analytically normalized on invariant domains.

\medskip

\subsection{Admissible domains}\label{SubsectionAdmissibleDom}

	By abuse, we use the same notation \emph{admissible domain} as in \cite{PRRSDulac21}. The difference is that in \cite{PRRSDulac21} the domains are admissible with the respect to hyperbolic maps. Compare with Subsection 3.1 in \cite{PRRSDulac21}. 
	
	Let $\alpha \in \mathbb{R}_{>1}$, $\varepsilon \in \mathbb{R}_{>0}$, $k\in\mathbb{N}$, and let 
	\begin{align}\label{eq:rhostrongly}
		M_{\varepsilon,k}(x) & :=\frac{1}{(\log^{\circ k} x)^{\varepsilon}}, \\
		\rho_{\alpha , \varepsilon,k}(x) & :=(\alpha -1)x - M_{\varepsilon,k}(x), \nonumber 
	\end{align}
	for $x\in\left(\exp^{\circ k}\left(0\right),+\infty\right) $. The map $M_{\varepsilon,k}$ is positive, strictly decreasing, tending to $0$, as $x\to+\infty$, and $\rho_{\alpha , \varepsilon,k}$ is a strictly increasing. Furthermore, $\rho_{\alpha , \varepsilon,k}(x)$ tends to $+\infty $, as $x\to+\infty$, and therefore $\rho_{\alpha , \varepsilon,k}(x)>0$ for sufficiently large $x$. \\
	
	In order to define the so-called admissible domains, we first define lower-upper pairs of maps. Let $t>\exp^{\circ k}(0)$ be such that $\rho_{\alpha ,\varepsilon,k}(x)>0$, for $x\in[t,+\infty)$. Let $h_{l},\ h_{u}:[t,+\infty)\to\mathbb R$ be two maps satisfying: 
	\begin{enumerate}[1., font=\textup, topsep=0.4cm, itemsep=0.4cm, leftmargin=0.6cm]
		\item $h_{l}(x)<h_{u}(x)$, $x\in[t,+\infty)$;
		\item $h_{l}$ is a decreasing map on $[t,+\infty)$ with the property:
		\begin{align*}
			h_{l}(x+\rho_{\alpha ,\varepsilon,k}(x))-h_{l}(x) & \leq (\alpha-1) \cdot h_{l}(x) -M_{\varepsilon,k}(x),\ x\in[t,+\infty);
		\end{align*}
		\item $h_{u}$ is an increasing map with the property:
		\begin{align*}
			h_{u}(x+\rho_{\alpha ,\varepsilon,k}(x))-h_{u}(x) & \geq (\alpha-1) \cdot h_{u}(x) +M_{\varepsilon,k}(x),\ x\in [t,+\infty).
		\end{align*}
	\end{enumerate}
	
	A map $h_{l}:[t,+\infty)\to\mathbb R$ satisfying property 2 is called a \emph{lower}
	\emph{map of type} $(\alpha ,\varepsilon,k)$, and a map $h_{u}:[t,+\infty)\to\mathbb R$
	satisfying property 3 is called an \emph{upper map of type $\left(\alpha ,\varepsilon,k\right)$}.
	A pair $(h_{l},h_{u})$ of maps $h_{l},\, h_{u}:[t,+\infty)\to\mathbb{R}$, satisfying properties 1-3 is called a \emph{lower-upper pair of type} $(\alpha ,\varepsilon,k)$.
	
	Let $h_{l},h_{u}:[t,+\infty)\to\mathbb R$, $t>\exp ^{\circ k}(0)$, be a lower (resp.) upper map of type $(\alpha ,\varepsilon ,k)\in \mathbb{R}_{>1}\times \mathbb{R}_{>0}\times \mathbb{N}$. Now, put 
	\begin{align*}
		D_{h_{l},h_{u}} & := \left\lbrace \zeta \in \mathbb{C} : \Re (\zeta ) \geq t, \, h_{l}(\Re (\zeta )) < \Im (\zeta ) < h_{u}(\Re (\zeta ))\right\rbrace .
	\end{align*}
	
	As in \cite[Definition 3.1]{PRRSDulac21} for hyperbolic maps, we similarly define the admissible domain of type $(\alpha , \varepsilon ,k)$.
	
	\begin{defn}[Admissible domain]\label{DefinitionAdmissibleDom}
		Let $\alpha \in \mathbb{R}_{>1}$, $\varepsilon \in \mathbb{R}_{>0}$ and $k\in\mathbb{N}$. A \emph{domain of type} $(\alpha ,\varepsilon,k)$ (or an \emph{$(\alpha, \varepsilon ,k)$-domain}) is defined as a union of an arbitrary nonempty collection of subsets of the form $D_{h_{l},h_{u}}\subseteq\mathbb{C}$ defined above.
		
		Similarly, a subset $D\subseteq\mathbb{C}$ which contains a $(\alpha, \varepsilon ,k)$-domain is called an \emph{admissible domain of type} $(\alpha ,\varepsilon,k)$ (or an \emph{$(\alpha , \varepsilon ,k)$-admissible domain}).
	\end{defn}
	
	\begin{rem}\label{rem:unionstrong}
		Note that an arbitrary union of domains of type $(\alpha,\varepsilon,k)$ is again a domain of type $(\alpha,\varepsilon,k)$.
	\end{rem}

	In Proposition~\ref{PropCriteriumForUpper} we state sufficient conditions for upper maps. We use Proposition~\ref{PropCriteriumForUpper} in Example~\ref{ExampleQuadraticDomainStrong}, where we prove that standard quadratic domains are admissible. 
	
	\begin{prop}[Sufficient condition for upper maps]\label{PropCriteriumForUpper}
		Let $h:\left[ t,+\infty \right) \to \mathbb{R}$, $t>0$, be a $C^{1}$-map such that $x\mapsto \frac{h(x)}{x}$ is an increasing map on $\left[ t,+\infty \right) $. Let $d>0$ be such that
		\begin{align}
			h'(x) & \geq d+\frac{h(x)}{x} , \label{PropositionEquationUpper}
		\end{align}
		for $x\geq t$. Then, for every $(\alpha , \varepsilon ,k) \in \mathbb{R}_{>1}\times \mathbb{R}_{>0}\times \mathbb{N}$, there exists $t'\geq t$ large enough, such that the restriction $h|_{\left[ t',+\infty \right) }$ is an upper map of type $(\alpha,\varepsilon,k)$.
	\end{prop}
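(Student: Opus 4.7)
The plan is as follows. Write $\rho=\rho_{\alpha,\varepsilon,k}(x)=(\alpha-1)x-M(x)$ and $M=M_{\varepsilon,k}(x)=1/(\log^{\circ k}x)^{\varepsilon}$. I have to produce $t'\geq t$ so that $h$ is increasing on $[t',+\infty)$ and
\begin{equation}
h(x+\rho)-h(x)\;\geq\;(\alpha-1)h(x)+M,\qquad x\geq t'.\label{eq:plangoal}
\end{equation}

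\textbf{Preliminaries.} From $h'(x)\geq d+h(x)/x$ one reads off $(h/x)'(x)=(h'(x)-h(x)/x)/x\geq d/x$, hence, integrating from $t$, $\widetilde{h}(x):=h(x)/x\geq\widetilde{h}(t)+d\log(x/t)\to+\infty$. In particular $h'(x)\geq d+\widetilde{h}(x)$ is eventually positive, so $h$ is eventually strictly increasing; this gives the monotonicity half of the upper-map definition.

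\textbf{Reformulation.} Using $x+\rho=\alpha x-M$ one has the algebraic identity
\[
h(x+\rho)-\alpha h(x)\;=\;(x+\rho)\bigl(\widetilde{h}(x+\rho)-\widetilde{h}(x)\bigr)\,-\,M\,\widetilde{h}(x),
\]
so \eqref{eq:plangoal} is equivalent to
\begin{equation}
(x+\rho)\bigl(\widetilde{h}(x+\rho)-\widetilde{h}(x)\bigr)\;\geq\;M\bigl(1+\widetilde{h}(x)\bigr).\label{eq:planred}
\end{equation}
I would prove \eqref{eq:planred} by bounding $\widetilde{h}(x+\rho)-\widetilde{h}(x)$ from below via two complementary estimates: the additive one
\[
\widetilde{h}(x+\rho)-\widetilde{h}(x)\;=\;\int_{x}^{x+\rho}\widetilde{h}'(y)\,dy\;\geq\;d\log\!\bigl(1+\rho/x\bigr)\;\geq\;d\log\!\tfrac{\alpha+1}{2}\;=:\;c>0
\]
(valid for $x$ large enough that $M/x\leq(\alpha-1)/2$, obtained by integrating $\widetilde{h}'\geq d/y$), and the multiplicative one $\widetilde{h}(x+\rho)\geq\widetilde{h}(x)$ coming from the monotonicity of $\widetilde{h}$.

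\textbf{Main argument.} Substituting the additive bound into \eqref{eq:planred} yields a lower bound $\sim c\alpha x$, which is linear in $x$, while the right hand side of \eqref{eq:planred} behaves like $M\widetilde{h}(x)$. As $M\to 0$, the bound $M(1+\widetilde{h}(x))\leq c\alpha x/2$ holds for all $x\geq t'$ with $t'$ large enough, provided $\widetilde{h}(x)$ is not too large; this is precisely the situation of interest, and covers the principal application in Example~\ref{ExampleQuadraticDomainStrong}, where $\widetilde{h}(x)\sim x$ and so $M\widetilde{h}(x)=O(x/(\log^{\circ k}x)^{\varepsilon})$ is absorbed by $c\alpha x$.

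\textbf{Main obstacle.} The delicate regime is the one in which $\widetilde{h}$ grows faster than $x(\log^{\circ k}x)^{\varepsilon}$, so that $M\widetilde{h}(x)$ outruns the linear lower bound above. The plan is to handle this by a case split on the ratio $r(x):=\widetilde{h}(x+\rho)/\widetilde{h}(x)\geq 1$. If $r(x)\geq 1+(\alpha-1)/(\alpha+1)$, then $\widetilde{h}(x+\rho)-\widetilde{h}(x)\geq\frac{\alpha-1}{\alpha+1}\widetilde{h}(x)$, giving
\[
(x+\rho)\bigl(\widetilde{h}(x+\rho)-\widetilde{h}(x)\bigr)\;\gtrsim\;x\widetilde{h}(x)\;\gg\;M\widetilde{h}(x),
\]
and \eqref{eq:planred} follows since $x\gg M$. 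Otherwise $r(x)$ is bounded by a constant strictly smaller than $\alpha$, which, iterated along the orbit $x_{n+1}=x_{n}+\rho(x_{n})\sim\alpha x_{n}$, forces $\widetilde{h}(x)\leq Cx^{p}$ for some $p<1$; then $M\widetilde{h}(x)=o(x)$ and the additive bound suffices. Combining both cases and choosing $t'$ large enough depending on $(\alpha,\varepsilon,k)$ and on $h$ yields \eqref{eq:planred}, hence \eqref{eq:plangoal}, finishing the proof.
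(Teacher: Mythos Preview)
Your preliminaries, reformulation, and additive bound are correct, and through your ``main argument'' you are essentially on the paper's route: the paper applies the Mean Value Theorem together with the monotonicity of $\widetilde h=h/x$ to get $\frac{h(x+\rho)-h(x)}{\rho}\ge d+\widetilde h(x)$, then asserts that $\frac{(\alpha-1)h(x)+M}{(\alpha-1)x-M}-\frac{h(x)}{x}$ tends to $0$ and is hence eventually $<d$. That assertion is equivalent to $M\,\widetilde h(x)/x\to 0$, which is exactly the regime in which your linear lower bound $(x+\rho)c$ dominates $M(1+\widetilde h(x))$; it holds in Example~\ref{ExampleQuadraticDomainStrong} (there $\widetilde h(x)$ grows linearly), so both arguments cover the intended application.

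The gap is in your ``main obstacle'' paragraph. Your dichotomy on $r(x)=\widetilde h(x+\rho)/\widetilde h(x)$ is \emph{pointwise}: case~1 yields the desired inequality at that particular $x$, but in case~2 you try to extract a \emph{global} bound $\widetilde h\le Cx^{p}$ by ``iterating along the orbit'', which would require case~2 to hold all along the orbit, not just at the single $x$ in question. This cannot be repaired, because the proposition is in fact false as stated. For instance take $\alpha=2$, $k=\varepsilon=1$ (so $M=1/\log x$), set $x_{n}=4^{n}$, and build a $C^{1}$ increasing $\widetilde h$ with $\widetilde h(x_{n})=3d\,x_{n}\log x_{n}$, constant slope $\widetilde h'=d/x_{n}$ on $[x_{n},2x_{n}]$ (so $\widetilde h'\ge d/x$ there), and steep growth on $[2x_{n},4x_{n}]$ to reach the next prescribed value. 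Then $h:=x\widetilde h$ satisfies all the hypotheses, yet at $x=x_{n}$ one finds $(x+\rho)\bigl(\widetilde h(x+\rho)-\widetilde h(x)\bigr)\approx 2dx_{n}$ while $M\bigl(1+\widetilde h(x_{n})\bigr)\approx 3dx_{n}$, so the upper-map inequality fails for every large $n$. The obstacle you flagged is therefore genuine; neither your case split nor the paper's limit claim closes it, and both proofs are valid only under the tacit extra hypothesis $M_{\varepsilon,k}(x)\,h(x)/x^{2}\to 0$, which is all that Example~\ref{ExampleQuadraticDomainStrong} requires.
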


	\begin{proof}
		Let $(\alpha , \varepsilon ,k) \in \mathbb{R}_{>1}\times \mathbb{R}_{>0}\times \mathbb{N}$. Since $M_{\varepsilon ,k}$ is strictly decreasing, it follows that for sufficiently large $t'\geq t$ the map
		\begin{align*}
			& x\mapsto \frac{(\alpha -1)h(x)+M_{\varepsilon ,k}(x)}{(\alpha -1)x-M_{\varepsilon ,k}(x)} - \frac{(\alpha -1)h(x)}{(\alpha -1)x}, \quad x\geq t',
		\end{align*}
		is strictly positive, tending to $0$ as $x\to +\infty $. Now, there exists $t'\geq t$ such that $\rho _{\alpha , \varepsilon ,k}(x)=(\alpha -1)x-M_{\varepsilon ,k}(x)>0$ and
		\begin{align}
			& 0< \frac{(\alpha -1)h(x)+M_{\varepsilon ,k}(x)}{(\alpha -1)x-M_{\varepsilon ,k}(x)} - \frac{(\alpha -1)h(x)}{(\alpha -1)x} < d , \label{CriteriumCondit1}
		\end{align}
		for every $x\geq t'$.
		
		By the Mean Value Theorem, for every $x\geq t'$ there exists $\theta \in (0,1)$ such that
		\begin{align}
			\frac{h\big( x+(\alpha -1)x-M_{\varepsilon ,k}(x) \big) -h(x)}{(\alpha -1)x-M_{\varepsilon ,k}(x)} & = h'\big( x+\theta ((\alpha -1)x-M_{\varepsilon ,k}(x)) \big) \nonumber \\
			& \geq d+\frac{h\big( x+\theta ((\alpha -1)x-M_{\varepsilon ,k}(x)) \big) }{x+\theta ((\alpha -1)x-M_{\varepsilon ,k}(x))} \nonumber \\
			& \geq d+ \frac{(\alpha -1)h(x)}{(\alpha -1)x} . \label{CriteriumCondit2}
		\end{align}
		The last line in \eqref{CriteriumCondit2} follows since $x\mapsto \frac{h(x)}{x}$ is an increasing map. Now, by \eqref{CriteriumCondit1} and \eqref{CriteriumCondit2}, it follows that
		\begin{align*}
			\frac{h\big( x+(\alpha -1)x-M_{\varepsilon ,k}(x) \big) -h(x)}{(\alpha -1)x-M_{\varepsilon ,k}(x)} & \geq \frac{(\alpha -1)h(x)+M_{\varepsilon ,k}(x)}{(\alpha -1)x-M_{\varepsilon ,k}(x)} ,
		\end{align*}
		for every $x\geq t'$. This implies that
		\begin{align*}
			h\big( x+\rho _{\alpha , \varepsilon ,k}(x) \big) -h(x) & \geq (\alpha -1)h(x)+M_{\varepsilon ,k}(x) ,
		\end{align*}
		for every $x\geq t'$, which proves the proposition.
	\end{proof}

	\begin{prop}[Sufficient condition for lower maps]\label{PropCriteriumForLower}
		Let $h:\left[ t,+\infty \right) \to \mathbb{R}$, $t>0$, be a $C^{1}$-map such that $x\mapsto \frac{h(x)}{x}$ is a decreasing map on $\left[ t,+\infty \right) $. Let $d>0$ be such that
		\begin{align*}
			h'(x) & \leq \frac{h(x)}{x} -d ,
		\end{align*}
		for $x\geq t$. Then, for $(\alpha , \varepsilon ,k) \in \mathbb{R}_{>1}\times \mathbb{R}_{>0}\times \mathbb{N}$, there exists $t'\geq t$ large enough, such that the restriction $h|_{\left[ t',+\infty \right) }$ is a lower map of type $(\alpha,\varepsilon,k)$.
	\end{prop}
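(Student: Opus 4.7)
The plan is to mirror the proof of Proposition~\ref{PropCriteriumForUpper}, reversing all relevant inequalities. Setting $g(x):=h(x)/x$, a direct computation gives $g'(x)=\frac{h'(x)-g(x)}{x}\leq -\frac{d}{x}$; integrating yields $g(x)\leq g(t)-d\log(x/t)$, so $g(x)\to -\infty$ and $|g(x)|=O(\log x)$ as $x\to +\infty$. Consequently $h'(x)\leq g(x)-d$ becomes strictly negative for $x$ large enough, so $h$ is eventually strictly decreasing on some $[t_{1},+\infty)$, a prerequisite for being a lower map.

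Paralleling the first estimate in the proof of Proposition~\ref{PropCriteriumForUpper}, I would next analyze the auxiliary map
\begin{align*}
	x\mapsto \frac{(\alpha-1)h(x)}{(\alpha-1)x}-\frac{(\alpha-1)h(x)-M_{\varepsilon,k}(x)}{(\alpha-1)x-M_{\varepsilon,k}(x)},\quad x\geq t_{1}.
\end{align*}
A direct algebraic simplification reduces it to $\frac{M_{\varepsilon,k}(x)(1-g(x))}{(\alpha-1)x-M_{\varepsilon,k}(x)}$, which is positive because $g(x)\to -\infty$; the bound $(1-g(x))=O(\log x)$ together with $M_{\varepsilon,k}(x)\to 0$ and $(\alpha-1)x-M_{\varepsilon,k}(x)\sim(\alpha-1)x$ shows that the expression tends to $0$. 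Therefore there exists $t'\geq t_{1}$ such that $\rho_{\alpha,\varepsilon,k}(x)>0$ and the auxiliary quantity is strictly less than $d$ on $[t',+\infty)$.

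Finally, for $x\geq t'$, the Mean Value Theorem gives $\theta\in(0,1)$ with
\begin{align*}
	\frac{h\big(x+\rho_{\alpha,\varepsilon,k}(x)\big)-h(x)}{\rho_{\alpha,\varepsilon,k}(x)}=h'\big(x+\theta\rho_{\alpha,\varepsilon,k}(x)\big)\leq g\big(x+\theta\rho_{\alpha,\varepsilon,k}(x)\big)-d\leq g(x)-d,
\end{align*}
where the second inequality uses the hypothesis on $h'$ and the third uses that $g=h/x$ is decreasing. Combined with the auxiliary inequality from the previous step, this gives $g(x)-d\leq \frac{(\alpha-1)h(x)-M_{\varepsilon,k}(x)}{(\alpha-1)x-M_{\varepsilon,k}(x)}$, and multiplying through by $\rho_{\alpha,\varepsilon,k}(x)=(\alpha-1)x-M_{\varepsilon,k}(x)>0$ yields the defining inequality of a lower map of type $(\alpha,\varepsilon,k)$.

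The delicate point, mirroring the one in the upper-map proof, is the verification that the auxiliary quantity tends to $0$: this relies on the quantitative bound $g(x)=O(\log x)$ from the first step, which ensures that $(1-g(x))M_{\varepsilon,k}(x)/x\to 0$ despite $g(x)\to -\infty$. The remaining steps are routine once this estimate is in place, so most of the effort consists of a careful bookkeeping of signs to confirm that the reversal of monotonicity (decreasing $g$ rather than increasing) propagates correctly through the Mean Value Theorem argument.
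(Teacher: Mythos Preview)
Your argument has a gap at the step where you conclude $|g(x)|=O(\log x)$. Integrating $g'(x)\leq -d/x$ gives only the \emph{upper} bound $g(x)\leq g(t)-d\log(x/t)$; this yields a \emph{lower} bound $1-g(x)\geq 1-g(t)+d\log(x/t)$, not the upper bound you need for the auxiliary quantity to tend to $0$. The hypotheses do not prevent $g$ from decreasing much faster: take $h(x)=-e^{x}$, so that $g(x)=-e^{x}/x$ is decreasing on $[2,\infty)$ and $h'(x)=-e^{x}\leq g(x)-d$ for any fixed $d>0$ once $x$ is large. Then your auxiliary quantity satisfies
\[
\frac{M_{\varepsilon,k}(x)\,(1-g(x))}{(\alpha-1)x-M_{\varepsilon,k}(x)}\ \sim\ \frac{e^{x}}{(\alpha-1)\,x^{2}\,(\log^{\circ k}x)^{\varepsilon}}\ \longrightarrow\ +\infty,
\]
so the inequality $g(x)-d\leq \dfrac{(\alpha-1)h(x)-M_{\varepsilon,k}(x)}{(\alpha-1)x-M_{\varepsilon,k}(x)}$ on which your last step rests fails for large $x$. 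Equivalently, your final chain reduces to $M_{\varepsilon,k}(x)\bigl(1-g(x)+d\bigr)\leq d(\alpha-1)x$, which is violated for this $h$.

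The proposition is nevertheless true for $h(x)=-e^{x}$ (one checks directly that $e^{\alpha x-M_{\varepsilon,k}(x)}\geq \alpha e^{x}+M_{\varepsilon,k}(x)$ for large $x$), so the failure is of the method, not the statement: replacing $h(x+\rho)-h(x)$ by $\rho\,(g(x)-d)$ via the Mean Value Theorem discards the strong decay of $g$ on $[x,x+\rho]$, which is exactly what drives the lower-map inequality in such cases. Your mirror of the upper-map proof is faithful---the paper's argument for Proposition~\ref{PropCriteriumForUpper} has the analogous unstated growth assumption---and for the only application (Example~\ref{ExampleQuadraticDomainStrong}) one has the extra bound $|h(x)/x|=O(x)$, under which your auxiliary quantity does tend to $0$ and your proof is valid as written.
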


	\begin{proof}
		Similarly as the proof of Proposition \ref{PropCriteriumForUpper}.
	\end{proof}
	
	\begin{example}[Standard quadratic domains]\label{ExampleQuadraticDomainStrong}
		Let $\mathcal{R}_{C}\subseteq\mathbb{C}$, $C>0$, be a standard quadratic domain defined in Definition~\ref{DefinitStandardQuadraticDomain} and let $\alpha \in \mathbb{R}_{>1}$, $\varepsilon \in \mathbb{R}_{>0}$ and $k\in \mathbb{N}$. By Remark~\ref{RemarkStandardQ} the upper half of the boundary of $\mathcal{R}_{C}$ is parametrized by:
		\begin{align*}
			r\mapsto x(r)+\mathrm{i}\cdot y(r)= C&\sqrt[4]{r^{2}+1}\cos\left(\frac{1}{2}\mathrm{arctg}\,r\right)+\\
			&+\mathrm{i}\cdot\left(r+C\sqrt[4]{r^{2}+1}\sin\left(\frac{1}{2}\mathrm{arctg}\,r\right)\right),\ r\in\left[0,+\infty\right).
		\end{align*}
		As in \cite[Example (3)]{PRRSDulac21} it can be shown that $r\mapsto y(r)$ and $r\mapsto x(r)$ are strictly increasing $C^{1}$-maps. Now, there exists $t\in \mathbb{R}_{>0}$ such that $x(t) > \exp ^{\circ k}(0)$, $h_{u}:=y\circ x^{-1}$ is strictly increasing $C^{1}$-map on $[x(t),+\infty)$, and
		\begin{align}
			& h_{u}'(r)=\frac{dy(r)}{dx(r)} =\frac{2(r^{2}+1)^{\frac{3}{4}}}{C(rs_{2}(r)-s_{1}(r))} + \frac{rs_{1}(r)+s_{2}(r)}{rs_{2}(r)-s_{1}(r)} \geq \frac{2}{Cs_{2}(r)}\sqrt{r} + \frac{s_{1}(r)}{s_{2}(r)} , \label{DerivationOfBoundaryQuadDomstrong}
		\end{align}
		where $s_{1}(r):=\sin (\frac{1}{2}\mathrm{arctg}\, r)$ and $s_{2}(r):=\cos (\frac{1}{2}\mathrm{arctg}\, r)$, for each $r\in \left[ 0,+ \infty \right) $.
		
		Note that:
		\begin{align}
			\frac{h_{u}(x(r))}{x(r)} & = \frac{y(r)}{x(r)} = \frac{r+Cs_{1}(r)\sqrt[4]{r^{2}+1}}{Cs_{2}(r)\sqrt[4]{r^{2}+1}} = \frac{r}{Cs_{2}(r)\sqrt[4]{r^{2}+1}} + \frac{s_{1}(r)}{s_{2}(r)} \nonumber \\
			& \leq \frac{1}{Cs_{2}(r)}\sqrt{r} + \frac{s_{1}(r)}{s_{2}(r)} , \label{DerivationOfBoundaryQuadDomstrongstrong2}
		\end{align}
		for each $r\in \left[ 0,+ \infty \right) $ such that $x(r)\geq t$. Since $r\mapsto x(r)$ is strictly increasing, from \eqref{DerivationOfBoundaryQuadDomstrongstrong2}, we see that $x\mapsto \frac{h_{u}(x)}{x}$ is an increasing map on $[x(t),+\infty) $. Furthermore, from \eqref{DerivationOfBoundaryQuadDomstrong} and \eqref{DerivationOfBoundaryQuadDomstrongstrong2}, it follows that \eqref{PropositionEquationUpper} holds for $d:=\frac{1}{Cs_{2}(t)}\sqrt{t} >0$ and the restriction $h_{u}|_{\left[ x(t),+\infty \right) }$. By Proposition~\ref{PropCriteriumForUpper}, there exists $t'\geq t$ large enough such that the restriction $h_{u}|_{\left[ x(t),+\infty \right) }$ is an upper map of type $(\alpha , \varepsilon ,k)$.
		
		Since the lower half of the boundary of $\mathcal{R}_{C}$ is symmetric to the upper half, using Proposition~\ref{PropCriteriumForLower}, similarly we can show that an appropriate restriction of the lower half of the boundary of $\mathcal R_{C}$ represents the graph of a lower map of type $\left( \alpha , \varepsilon, k\right) $.
		
		Therefore, $\mathcal{R}_{C}$ is an admissible domain of type $(\alpha , \varepsilon , k)$. Furthermore, there exists $R>0$ such that $(\mathcal{R}_{C})_{R}:=\mathcal{R}_{C}\cap \left( \left[ R,+\infty \right) \times \mathbb{R}\right) $ is a domain of type $(\alpha , \varepsilon , k)$.
	\end{example}

\medskip

\subsection{Analytic normalization on admissible domains (Theorem~B)}
	For $D\subseteq \mathbb{C}$ and $R\in \mathbb{R}_{>0}$ we define
	\begin{align*}
		D_{R} := D\cap \left( \left[ R,+\infty \right) \times \mathbb{R} \right) .
	\end{align*}
	For $f:D\to \mathbb{C}$ we define the \emph{maximal} $f$\emph{-invariant subdomain} of $D$ as the union of all $f$-invariant subdomains of $D$, and denote it by $D^{f}$.
	
	Furthermore, if $D$ is an admissible domain of type $(\alpha , \varepsilon ,k)\in \mathbb{R}_{>1}\times \mathbb{R}_{>0}\times \mathbb{N}$, then we call the union of all subdomains of $D$ of type $(\alpha , \varepsilon ,k)$, the \emph{maximal subdomain of type} $(\alpha , \varepsilon ,k)$ of the domain $D$, and denote it by $\overline{D}$.
	
	Finally, we use the following notation: $D^{f}_{R}:=(D^{f})_{R}$ and $\overline{D}_{R}:=(\overline{D})_{R}$, for every $R\in \mathbb{R}_{>0}$.
	
	\begin{prop}\label{prop:invariance of D strong}
		Let $\alpha \in \mathbb{R}_{>1}$, $\varepsilon \in \mathbb{R}_{>0}$ and $k\in\mathbb{N}$. Let $D\subseteq \mathbb{C}$ be an admissible domain of
		type $(\alpha,\varepsilon,k)$ and let $f:D_C\to\mathbb C$, $C>\exp^{\circ k}(0)$,
		be an analytic map with the following asymptotic behaviour:
		\begin{equation}\label{eq:asystrong}
			f(\zeta)=\alpha \zeta +o(\boldsymbol{L}_{k}^{-\varepsilon }),\ 
			\text { as }\mathrm{\Re }(\zeta)\to +\infty \text{ uniformly on $D_C$}.
		\end{equation}
		Here,
		\begin{align*}
			& \boldsymbol{L}_{1}:=\log\left(\zeta\right),\ldots,\boldsymbol{L}_{k}:=\log\left(\boldsymbol{L}_{k-1}\right),
		\end{align*}
		where $\log$ represents the principal branch of the logarithm\footnote{This is the notation from \cite[Proposition 3.4]{PRRSDulac21}. Note here that, for $C>\mathrm{exp}^{\circ k}(0)$, the iterated logarithms $\boldsymbol{L}_1,\ldots,\boldsymbol L_k$ are well-defined on $D_C$ (using only the principal branch of the logarithm), since $\Re(\zeta)>\mathrm{exp}^{\circ k}(0)$.}. Then, for every $R>C$ sufficiently large, the domain $\overline{D}_{R}$ is $f$-invariant. In particular, $\overline{D}_{R}\subseteq{D}_{R}^f$ and $D_R^f\neq \emptyset$, for $R>C$. 
	\end{prop}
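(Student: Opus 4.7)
The plan is to reduce the $f$-invariance of $\overline{D}_R$ to an invariance check on each elementary building block $D_{h_l,h_u}$, perform that check using the upper- and lower-map inequalities, and then reassemble via the union. By Remark~\ref{rem:unionstrong} and the definition of $\overline{D}$, one may write $\overline{D}$ as a union of subsets $D_{h_l,h_u}\subseteq D$ indexed by lower-upper pairs $(h_l,h_u)$ of type $(\alpha,\varepsilon,k)$; since an arbitrary union of $f$-invariant sets is $f$-invariant, it suffices to produce a single $R>C$ (independent of the chosen pair) so that $(D_{h_l,h_u})_R$ is $f$-invariant for every such pair.

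First I would extract the two key quantitative bounds from the uniform asymptotic \eqref{eq:asystrong}. Since $|f(\zeta)-\alpha\zeta|=o(M_{\varepsilon,k}(\Re\zeta))$ on $D_C$, there exists $R_1>C$ such that $|f(\zeta)-\alpha\zeta|\leq M_{\varepsilon,k}(\Re\zeta)$ for all $\zeta\in D_C$ with $\Re\zeta\geq R_1$; enlarging $R_1$ if necessary (using $\rho_{\alpha,\varepsilon,k}(x)\to+\infty$), I may further assume $\rho_{\alpha,\varepsilon,k}(x)\geq 0$ on $[R_1,+\infty)$. Setting $R:=R_1$ and writing $\zeta=x+\mathrm{i}y$ with $x\geq R$, the componentwise triangle inequality then gives
\begin{align*}
\Re f(\zeta) & \geq \alpha x-M_{\varepsilon,k}(x)=x+\rho_{\alpha,\varepsilon,k}(x)\geq x\geq R,\\
|\Im f(\zeta)-\alpha y| & \leq M_{\varepsilon,k}(x).
\end{align*}

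Next I would verify that $f(\zeta)$ lies in the same elementary subdomain. For $\zeta\in(D_{h_l,h_u})_R$, monotonicity of $h_u$ together with the upper-map property yields
\begin{align*}
h_u(\Re f(\zeta))\geq h_u(x+\rho_{\alpha,\varepsilon,k}(x))\geq\alpha h_u(x)+M_{\varepsilon,k}(x)>\alpha y+M_{\varepsilon,k}(x)\geq\Im f(\zeta),
\end{align*}
the strict step using $y<h_u(x)$ and $\alpha>0$. The mirror computation with $h_l$ decreasing and the lower-map inequality gives $h_l(\Re f(\zeta))<\Im f(\zeta)$. Combined with $\Re f(\zeta)\geq R$ this shows $f(\zeta)\in (D_{h_l,h_u})_R$. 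Taking unions over all pairs with $D_{h_l,h_u}\subseteq\overline{D}$ yields $f(\overline{D}_R)\subseteq\overline{D}_R$, whence $\overline{D}_R\subseteq D_R^f$; nonemptiness of $D_R^f$ is then immediate since each $D_{h_l,h_u}$ contains points of arbitrarily large real part.

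The only delicate point is a calibration issue, which is really where all the design of the definitions pays off: the constant $1$ in front of $M_{\varepsilon,k}(x)$ in the upper-/lower-map inequalities is exactly matched to the $-M_{\varepsilon,k}(x)$ hidden inside $\rho_{\alpha,\varepsilon,k}$ and to the asymptotic bound $|f(\zeta)-\alpha\zeta|\leq M_{\varepsilon,k}(\Re\zeta)$, so that the two $M_{\varepsilon,k}(x)$ contributions cancel and leave the strictly positive quantities $\alpha(h_u(x)-y)$ and $\alpha(y-h_l(x))$ as the governing terms. Once that matching is observed, no further nontrivial estimate is required and the rest is bookkeeping.
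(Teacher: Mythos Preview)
Your proof is correct and follows essentially the same route as the paper's: extract the bound $|f(\zeta)-\alpha\zeta|\leq M_{\varepsilon,k}(\Re\zeta)$ from the asymptotic, read off the real- and imaginary-part inequalities, and feed these into the defining inequalities of a lower-upper pair to trap $f(\zeta)$ back in $(D_{h_l,h_u})_R$. The paper packages the last step as ``$f(\zeta)\in\mathcal{S}_{\alpha,\varepsilon,k}(\zeta)\subseteq D_{h_l,h_u}$'' for a small explicit rectangle $\mathcal{S}_{\alpha,\varepsilon,k}(\zeta)$, but this is exactly your chain of inequalities written in set form.

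One small point you glossed over: the hypothesis is $f(\zeta)=\alpha\zeta+o(\boldsymbol{L}_k^{-\varepsilon})$, not directly $o(M_{\varepsilon,k}(\Re\zeta))$. To pass from the former to the latter you need $|\boldsymbol{L}_k|\geq\log^{\circ k}(\Re\zeta)$ on $D_C$, which the paper records explicitly (by induction on $m$, using $|\log w|\geq\log|w|\geq\log(\Re w)$) before invoking it. It is routine, but worth one line since $\boldsymbol{L}_k$ is a complex quantity and $M_{\varepsilon,k}$ is real.
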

	
	\begin{proof}
		By \eqref {eq:asystrong}, it follows that
		\begin{align}\label{eq:asylogstrong}
			\lim_{\Re(\zeta)\to+\infty}\frac{f(\zeta)- \alpha \zeta }{\boldsymbol{L}_{k}^{-\varepsilon }} & =0,
		\end{align}
		uniformly on $D_{C}$.
		
		Now, there exists $R>C$
		large enough, such that $\rho _{\alpha , \varepsilon ,k}(R)>0$, $\rho _{\alpha , \varepsilon ,k}$ is increasing on $[R,+\infty)$ and
		\begin{equation}
			\left|f(\zeta)- \alpha \zeta \right|\leq\frac{1}{\left| \boldsymbol{L}_{k}^{\varepsilon}\right|}, \label{eq:newstrong}
		\end{equation}
		for $\zeta\in D_R$. As in the proof of \cite[Proposition 3.4]{PRRSDulac21} we inductively get:
		\begin{align}
			|\boldsymbol{L}_{m}| & \geq\log^{\circ m}(\Re \left(\zeta\right)) , \label{eq:newstrong1}
		\end{align}
		for $1\leq m\leq k$ and $\zeta\in D_R$. From \eqref{eq:newstrong} and \eqref{eq:newstrong1}, it follows that:
		\begin{equation}
			|f(\zeta)- \alpha \zeta |\leq\frac{1}{(\log^{\circ k}(\Re(\zeta)))^{\varepsilon}} , \label{PropEqstrong1}
		\end{equation}
		for $\zeta\in D_R$. Now, we get that:
		\begin{align}
			\Re (f(\zeta))-\Re(\zeta) & \geq (\alpha -1) \Re(\zeta)  -\frac{1}{(\log^{\circ k}(\Re(\zeta)))^{\varepsilon}} \nonumber \\
			& =\rho_{\alpha ,\varepsilon,k}\left(\Re \left(\zeta\right)\right) , \label{EqRealPartstrong}
		\end{align}
		\begin{align}
			\Re (f(\zeta))-\Re(\zeta) & \leq (\alpha -1)\Re(\zeta) +\frac{1}{(\log^{\circ k}(\Re(\zeta)))^{\varepsilon}} \nonumber \\
			& =(\alpha -1)\Re(\zeta) + M_{\varepsilon ,k}(\Re  (\zeta )) ,\label{EqRealPart1strong}
		\end{align}
		and
		\begin{align}
			\Im(f(\zeta))-\Im(\zeta) & \geq (\alpha -1)\Im(\zeta) -\frac{1}{(\log^{\circ k}(\Re(\zeta)))^{\varepsilon}} \nonumber \\
			& =  (\alpha -1)\Im(\zeta) - M_{\varepsilon,k}(\Re \left(\zeta\right)), \label{eq:new3strong}
		\end{align}
		\begin{align}
			\Im(f(\zeta))-\Im(\zeta) & \leq  (\alpha -1)\Im(\zeta) +\frac{1}{(\log^{\circ k}(\Re(\zeta)))^{\varepsilon}} \nonumber \\
			& = (\alpha -1)\Im(\zeta) + M_{\varepsilon,k}(\Re \left(\zeta\right)) , \label{eq:new2strong}
		\end{align}
		for $\zeta\in D_R$. Note that $\rho_{\alpha ,\varepsilon,k}(\Re \left(\zeta\right))\geq\rho_{\alpha ,\varepsilon,k}(R)>0$, for every $\zeta\in D_R$, since $\rho_{\alpha,\varepsilon,k}$ is an increasing map on $\left[ R,+\infty \right) $. Now, put
		\begin{align*}
			\mathcal{S}_{\alpha ,\varepsilon,k}(\zeta) & :=\left[ \Re(\zeta) + \rho _{\alpha , \varepsilon ,k}(\Re(\zeta)),\, \alpha \cdot \Re(\zeta)+M_{\varepsilon,k}(\Re(\zeta))\right] \\
			& \times\left[\alpha \cdot \Im (\zeta ) -M_{\varepsilon,k}(\Re(\zeta)), \alpha \cdot \Im (\zeta ) +M_{\varepsilon,k}(\Re(\zeta))\right] ,
		\end{align*}
		for each $\zeta\in D_R$. By \eqref{EqRealPartstrong}-\eqref{eq:new2strong}, we get that
		\begin{align*}
			& f(\zeta)\in\mathcal{S}_{\alpha ,\varepsilon,k}(\zeta) ,
		\end{align*}
		for every $\zeta \in D_{R}$. Now we prove that $\overline D_R$ is $f$-invariant. Let $\zeta \in \overline{D}_{R}$ be arbitrary. Now, there exists an $(\alpha ,\varepsilon,k)$-domain $(D_{h_{l},h_{u}})_R\subseteq\overline{D}_R$,
		such that $\zeta\in (D_{h_{l},h_{u}})_R$. By properties 2 and 3 in the definition of a lower-upper pair of type $(\alpha, \varepsilon ,k)$ it is easy to see that $\mathcal{S}_{\alpha ,\varepsilon,k}(\zeta)\subseteq D_{h_{l},h_{u}}$. Consequently, it follows that $f(\zeta)\in D_{h_{l},h_{u}}\subseteq\overline{D}$. By \eqref{EqRealPartstrong}, since $\zeta\in D_R$ and $\rho_{\alpha ,\varepsilon,k}(\Re(\zeta))>0,$ for $\zeta\in D_R$, it follows that $\Re (f(\zeta ))>\Re(\zeta)\geq R$. Since $f(\zeta)\in D_{h_{l},h_{u}}\subseteq\overline{D}$, we get that $f(\zeta )\in \overline D_R$. Since $\overline{D}_{R}\neq \emptyset $ and $\overline{D}_{R}\subseteq D^{f}_{R}$, we get that $D^{f}_{R}\neq \emptyset $, for every large enough $R>C$. 
	\end{proof}

	\begin{example}\label{ExampleStandardQuadraticDom1}
		Let $\alpha \in \mathbb{R}_{>1}$, $\varepsilon \in \mathbb{R}_{>0}$, $k\in \mathbb{N}$, and let $\mathcal{R}_{C}$, $C>0$, be a standard quadratic domain. By Example~\ref{ExampleQuadraticDomainStrong}, it follows that there exists $R\in \mathbb{R}_{>0}$ large enough, such that $(\mathcal{R}_{C})_{R}$ is a domain of type $(\alpha , \varepsilon ,k)$. It implies that $(\mathcal{R}_{C})_{R}\subseteq (\overline{\mathcal{R}_{C}})_{R}$. Since, $(\overline{\mathcal{R}_{C}})_{R}\subseteq (\mathcal{R}_{C})_{R}$, it follows that $(\mathcal{R}_{C})_{R}=(\overline{\mathcal{R}_{C}})_{R}$. By Proposition~\ref{prop:invariance of D strong}, it follows that $(\mathcal{R}_{C})_{R}=(\overline{\mathcal{R}_{C}})_{R}\subseteq (\mathcal{R}_{C})^{f}_{R}$. Since $(\mathcal{R}_{C})^{f}_{R}\subseteq (\mathcal{R}_{C})_{R}$, we get that $(\mathcal{R}_{C})^{f}_{R}=(\overline{\mathcal{R}_{C}})_{R}=(\mathcal{R}_{C})_{R}$ (see Figure~\ref{Figure2}).
	\end{example}

	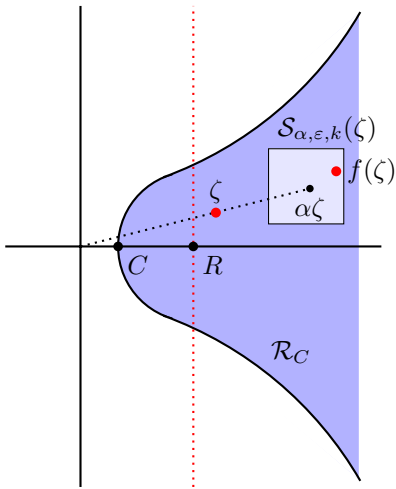
\begin{figure}[h!]
		\centering
		\begin{tikzpicture}
			\filldraw[draw=none,fill=blue!30!white] (3.7,-3.1) -- (1.2,-0.96) -- (1.2,0.96) -- (3.7,3.1) -- cycle;
			\filldraw[draw=black,thick,fill=blue!30!white] (1.5,0) circle (1cm);
			\filldraw[draw=none,thick,fill=blue!30!white,rounded corners] (1.22,-1.05) rectangle (3.1,1.05);
			\filldraw[draw=black,thick,fill=white] (1.1,0.92) arc (290:330:5cm);
			\filldraw[draw=black,thick,fill=white] (1.1,-0.92) arc (70:30:5cm);
			\draw (0.5,0) node[draw=black,thick,fill=black,circle,scale=0.3]{};
			\draw (0.5,0) node[below right]{$C$} (2.8,-1.4) node{$\mathcal{R}_{C}$};
			\draw[black,thick] (-1,0) -- (4,0) (0,-3.2) -- (0,3.2);
			\filldraw[draw=black,fill=blue!10!white] (2.5,0.3) rectangle (3.5,1.3);
			\draw[black,thick,dotted] (0,0) -- (3.05,0.77);
			\draw (2.5,1.25) node[above right]{$\mathcal{S}_{\alpha , \varepsilon ,k}(\zeta )$} (3.05,0.77) node[below]{$\alpha \zeta$} (1.8,0.45) node[above]{$\zeta $} (3.42,1) node[right]{$f(\zeta )$};
			\draw (3.05,0.77) node[draw=none,fill=black,circle,scale=0.3]{} (1.8,0.45) node[draw=none,fill=red,circle,scale=0.4]{} (3.4,1) node[draw=none,fill=red,circle,scale=0.4]{};
			\draw[red,thick,dotted] (1.5,-3.2) -- (1.5,3.2);
			\draw (1.5,0) node[draw=black,thick,fill=black,circle,scale=0.3]{};
			\draw (1.5,0) node[below right]{$R$};
		\end{tikzpicture}
		\caption{Application of Proposition~\ref{prop:invariance of D strong} for $D:=\mathcal{R}_{C}$, $C>0$.}
		\label{Figure2}
	\end{figure}

	Now we prove the main result of this section.

	\begin{thmB}[Normalization of analytic maps with strongly hyperbolic logarithmic asymptotics on admissible domains]\label{TmNormalizationStrongAnalytic}
		Let $\alpha\in\mathbb R_{>1}$, $\varepsilon \in \mathbb{R}_{>0}$ and $k\in\mathbb{N}$. Let $D\subseteq\mathbb C^+$
		be an admissible domain of type $(\alpha ,\varepsilon,k)$. For $C>\exp^{\circ k}(0)$, let $f:D_C\to\mathbb C$ 
		be an analytic map such that
		\begin{equation}\label{eq:conditstrong}
			f(\zeta)= \alpha \zeta +o(\boldsymbol{L}_{k}^{-\varepsilon }),\ 
			\text { as }\mathrm{\Re }(\zeta)\to +\infty \text{ uniformly on $D_C$}.
		\end{equation}
		Here, the iterated logarithm $\boldsymbol{L}_k$ is defined as in Proposition~\ref{prop:invariance of D strong}. Then:
		\begin{enumerate}[1., font=\textup, topsep=0.4cm, itemsep=0.4cm, leftmargin=0.6cm]
			\item \emph{(Existence)} For a sufficiently large $R>\exp^{\circ k}\left(0\right)$ there exists
			an analytic normalizing map $\varphi$ on the $f$-invariant
			subdomain $D_{R}^f\subseteq D$. That is, $\varphi$ 
			satisfies 
			\begin{align}
				(\varphi\circ f)(\zeta) & = \alpha \cdot \varphi(\zeta) ,\text{ for all }\zeta\in D_{R}^f. \label{eq:linera-1strong}
			\end{align}
			Moreover, $\varphi$ is the uniform limit on $D_R^f$ of the B\"ottcher sequence
			\begin{align}
				& \left( \frac{1}{\alpha ^{n}}f^{\circ n}\right) _{n} \label{BotSeqKoenigs}
			\end{align}
			in the $\zeta $-chart.
			\item If $D_R^f\cap \{\zeta\in\mathbb C^+:\Im(\zeta)=0\}$ is $f$-invariant, 
			then it is also $\varphi $-invariant.
			\item {\em(Asymptotics)} The normalization $\varphi$ is \emph{tangent to identity}, i.e., $\varphi (\zeta)=\zeta+o(1)$, uniformly on $D_{R}^f\subseteq \mathbb C^{+}$, as $\Re(\zeta)\to +\infty$.
			
			In particular, for every $0<\nu < \varepsilon $, it follows that $\varphi(\zeta)=\zeta +o(\boldsymbol{L}_{k}^{-\nu})$, uniformly as $\Re(\zeta)\to +\infty$, on every subdomain $D_{h_{l},h_{u}}\subseteq D_R^f$ such that $h_{l}(x)=O(x)$ and $h_{u}(x)=O(x)$. 
			\item \emph{(Uniqueness)} Let $\psi:D_{1}\to\mathbb C$, be
			a normalization of $f$ on an $f$-invariant subset $D_{1}\subseteq D$, such that $\psi(\zeta)=\zeta+o(1)$ uniformly on $D_{1}$, as $\Re(\zeta)\to +\infty$. Then $\psi\equiv \varphi$ on $(D_{1})_{R}$, where $R$ is from statement 1. 
		\end{enumerate}
	\end{thmB}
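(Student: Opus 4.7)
The plan is to realize $\varphi$ as the uniform limit on $D_R^{f}$ of the Koenigs sequence $\varphi_n := \alpha^{-n} f^{\circ n}$, for $R > C$ sufficiently large so that $D_R^{f}\neq\emptyset$ by Proposition~\ref{prop:invariance of D strong}. The recursion $\varphi_{n+1}(\zeta) = \alpha^{-1}\varphi_n(f(\zeta))$ guarantees that any uniform limit automatically satisfies the normalization equation \eqref{eq:linera-1strong}, and analyticity of $\varphi$ will follow from Weierstrass' theorem applied to the analytic iterates $\alpha^{-n}f^{\circ n}$. Statement~2 is then immediate: if $D_R^{f}\cap\mathbb R$ is $f$-invariant, each $\varphi_n$ is real-valued on this set, and the uniform limit inherits this property.

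The crux of the argument is a telescoping estimate, and this is where I expect the main technical obstacle. I would first show, iterating \eqref{EqRealPartstrong} (and using that $(\alpha-1)x - M_{\varepsilon,k}(x)\geq\frac{1}{2}(\alpha-1)x$ for large $x$), that there is a constant $c>0$ depending on $R$ with
\[
\Re\bigl(f^{\circ n}(\zeta)\bigr) \;\geq\; c\,\alpha^{n}\Re(\zeta),\qquad \zeta\in D_R^{f},\ n\in\mathbb N.
\]
Combining this with the inequality $|\boldsymbol L_k(\zeta')|\geq\log^{\circ k}\Re(\zeta')$ established in the proof of Proposition~\ref{prop:invariance of D strong} and with the asymptotic hypothesis \eqref{eq:conditstrong}, I obtain, for some absolute constant $K>0$,
\[
\bigl|\varphi_{n+1}(\zeta)-\varphi_n(\zeta)\bigr|
\;=\;\frac{\bigl|f(f^{\circ n}(\zeta))-\alpha f^{\circ n}(\zeta)\bigr|}{\alpha^{n+1}}
\;\leq\;\frac{K}{\alpha^{n+1}\bigl(\log^{\circ k}(c\alpha^{n}\Re(\zeta))\bigr)^{\varepsilon}}.
\]
The right-hand side is geometrically summable in $n$ uniformly on $D_R^{f}$, yielding convergence of $(\varphi_n)$ and hence existence of the normalization in statement~1.

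For statement~3, summing the same estimate from $n=0$ (with $\varphi_0=\mathrm{id}$) gives
\[
|\varphi(\zeta)-\zeta|\;\leq\;\sum_{n\geq 0}\frac{K}{\alpha^{n+1}\bigl(\log^{\circ k}(c\alpha^{n}\Re(\zeta))\bigr)^{\varepsilon}},
\]
which tends to $0$ as $\Re(\zeta)\to+\infty$ by dominated convergence, yielding $\varphi(\zeta)=\zeta+o(1)$ uniformly on $D_R^{f}$. The sharper estimate on a subdomain $D_{h_l,h_u}$ with $h_l(x),h_u(x)=O(x)$ rests on the observation that under linear growth of the imaginary bounds one has $|\zeta|=O(\Re\zeta)$, which iterates to $|\boldsymbol L_k(\zeta)|\asymp\log^{\circ k}\Re(\zeta)$; the leading $n=0$ term then dominates the sum, giving $|\varphi(\zeta)-\zeta|=O(|\boldsymbol L_k(\zeta)|^{-\varepsilon})$, hence $o(|\boldsymbol L_k(\zeta)|^{-\nu})$ for every $0<\nu<\varepsilon$.

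For uniqueness (statement~4), given any parabolic normalization $\psi$ on an $f$-invariant subset $D_1$, I would iterate the functional equation to write
\[
\psi(\zeta)\;=\;\frac{\psi(f^{\circ n}(\zeta))}{\alpha^{n}}\;=\;\frac{f^{\circ n}(\zeta)+o(1)}{\alpha^{n}}\;=\;\varphi_n(\zeta)+\frac{o(1)}{\alpha^{n}},
\]
where the $o(1)$ is uniform on $D_1$ and applicable at $f^{\circ n}(\zeta)$ because $\Re(f^{\circ n}(\zeta))\to+\infty$ by the real-part bound above. Letting $n\to\infty$ yields $\psi\equiv\varphi$ on $(D_1)_R$, closing the proof. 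No new estimate beyond the real-part lower bound is needed.
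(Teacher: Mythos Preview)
Your proposal is correct and follows essentially the same Koenigs-sequence/telescoping approach as the paper. The only difference worth noting is that the paper uses the simpler linear lower bound $\Re(f^{\circ n}(\zeta))\geq R+n\rho_{\alpha,\varepsilon,k}(R)$ in place of your exponential one (and your stated base $\alpha^{n}$ does not follow from iterating the halved increment $\frac{1}{2}(\alpha-1)x$---that gives $\bigl(\tfrac{\alpha+1}{2}\bigr)^{n}$---but any base $>1$, or even the paper's linear bound, suffices for every place you invoke it).
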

	
	\begin{proof}
		1. By Proposition~\ref{prop:invariance of D strong},
		there exists $R>\exp^{\circ k}\left(0\right)$ such
		that $\overline{D}_{R}\subseteq{D}_{R}^f$ and $\rho _{\alpha , \varepsilon ,k}(R) >0$. Consequently, it follows that ${D}_{R'}^f\neq\emptyset$, for all $R'\geq R$. Let $\zeta\in D_{R}^f$. By \eqref{EqRealPartstrong}, since $\rho_{\alpha,\varepsilon,k}$ is increasing on $[R,+\infty)$, it follows that:
		\begin{equation}\label{Equat1strong}
			\Re (f^{\circ n}(\zeta))\geq \Re \left(\zeta\right)+n\rho_{\alpha ,\varepsilon,k}(\Re \left(\zeta\right))\geq R+n\rho_{\alpha,\varepsilon,k}(R),\ n\in\mathbb{N} .
		\end{equation}
		By \eqref{PropEqstrong1}, it follows that
		\begin{equation}\label{Equati2strong}
			\left|f(\zeta)-\alpha \zeta \right|\leq\frac{1}{(\log^{\circ k}(\Re \left(\zeta\right)))^{\varepsilon}}=M_{\varepsilon,k}(\Re \left(\zeta\right)) ,
		\end{equation}
		for every $\zeta\in D_R^f$. Inductively, from \eqref{Equat1strong} and \eqref{Equati2strong}, since $M_{\varepsilon,k}$ is decreasing on $[R,+\infty)$, for every $n\in\mathbb{N}$ and $\zeta\in D_R^f$ we get that:
		\begin{align}\label{eq:jojstrong}
			\left|\frac{1}{\alpha ^{n+1}}f^{\circ(n+1)}(\zeta)- \frac{1}{\alpha ^{n}}f^{\circ n}(\zeta) \right| & =\frac{1}{\alpha ^{n+1}}\left|f\left(f^{\circ n}(\zeta)\right)- \alpha f^{\circ n}(\zeta) \right| \nonumber \\
			& \leq \frac{1}{\alpha ^{n+1}} M_{\varepsilon,k}\bigl(\Re (f^{\circ n}(\zeta))\bigr)\nonumber \\
			& \leq \frac{1}{\alpha ^{n+1}} M_{\varepsilon,k}\left(R+n\rho_{\alpha ,\varepsilon,k}(R)\right).
		\end{align}
		Since $M_{\varepsilon,k}\left(R+n\rho_{\alpha ,\varepsilon,k}(R)\right) $ tends to $0$, as $n\to +\infty $, it follows that $$\sum \frac{1}{\alpha ^{n+1}} M_{\varepsilon,k}\left(R+n\rho_{\alpha ,\varepsilon,k}(R)\right) $$ converges. Consequently, the B\"ottcher sequence $\big( \frac{1}{\alpha ^{n}}f^{\circ n} \big) _{n}$ is uniformly Cauchy, and, therefore, converges uniformly on $D_R^f$. Let $\varphi$ be its uniform limit on ${D}_{R}^f$. By the Weierstrass' Theorem, $\varphi$ is analytic on  ${D}_{R}^f$.
		
		In the end, we check that $\varphi $ is a solution of the normalization equation:
		\begin{align*}
			(\varphi\circ f)(\zeta) & =\lim _{n\to \infty }\left(\frac{1}{\alpha ^{n}}f^{\circ n}(f(\zeta)) \right) \\
			& =\alpha \cdot \lim _{n\to \infty }\left(\frac{1}{\alpha ^{n+1}}f^{\circ(n+1)}(\zeta) \right) \\
			& = \alpha \cdot \varphi(\zeta) , \quad \zeta \in D^{f}_{R} .
		\end{align*}
		
		2. Let $D_R^f\cap \{\zeta\in\mathbb C: \Im(\zeta)=0\}$ be $f$-invariant. Recall that
		\begin{equation}\label{eq:fistrong}
			\varphi(\zeta)=\lim_{n\to\infty}\left( \frac{1}{\alpha ^{n}}f^{\circ n}(\zeta) \right) ,
		\end{equation}
		for every $\zeta \in D_R^f\cap \{\zeta\in\mathbb C: \Im(\zeta)=0\} $. Since $\{\zeta \in \mathbb{C} : \Im(\zeta) =0\}$ is closed in $\mathbb{C}^+$, by \eqref{eq:fistrong}, it follows that $D_R^f\cap \{\zeta\in\mathbb C: \Im(\zeta)=0\}$ is $\varphi$-invariant.
		
		3. Now, taking the sum of the terms
		\begin{align*}
			& \frac{1}{\alpha ^{n+1}}f^{\circ(n+1)}(\zeta)- \frac{1}{\alpha ^{n}}f^{\circ n}(\zeta)
		\end{align*}
		in \eqref{eq:jojstrong}, for $n\in \left\lbrace 0, \ldots , m-1\right\rbrace $, since $M_{\varepsilon ,k}$ is decreasing, it follows that:
		\begin{align}
			\left|\frac{1}{\alpha ^{m}}f^{\circ(m)}(\zeta)- \zeta \right| & \leq \sum_{n=0}^{m-1}\frac{1}{\alpha ^{n+1}}M_{\varepsilon,k}\left(\Re \left(\zeta\right)+n\rho_{\alpha ,\varepsilon,k}(\Re \left(\zeta\right))\right) \nonumber \\
			& \leq M_{\varepsilon,k}\left( \Re  \left( \zeta \right) \right) \cdot \sum_{n=0}^{m-1}\frac{1}{\alpha ^{n+1}} \nonumber \\
			& \leq M_{\varepsilon,k}\left( \Re  \left( \zeta \right) \right) \cdot \frac{1}{1- \frac{1}{\alpha }} . \label{eqConvergenceStrong}
		\end{align}
		Now, as ${m\to+\infty}$, by \eqref{eqConvergenceStrong}, it follows that 
		\begin{equation}\label{eq:hahhstrong}
			\left|\varphi(\zeta) - \zeta\right| \leq \frac{1}{1- \frac{1}{\alpha }} \cdot M_{\varepsilon,k}\left( \Re  \left( \zeta \right) \right) ,
		\end{equation}
		for each $\zeta \in D_R^f$. From \eqref{eq:hahhstrong} we conclude that $\varphi(\zeta )=\zeta + o(1)$, uniformly on $D_{R}^f$ as $\Re  (\zeta )\to + \infty $.
		
		Now, the remainder of the proof of statement 3 follows as in the proof of statement 3 of \cite[Theorem~A]{PRRSDulac21}. 
		
		4. Let $\psi$ be an analytic germ such that $\psi\circ f=\alpha \cdot \psi $, on an $f$-invariant subset $D_{1}\subseteq D$, and $\psi (\zeta)=\zeta+o(1)$ uniformly on $D_{1}$, as $\Re(\zeta)\to+\infty$. Since $D^f$ is the maximal $f$-invariant subdomain of $D$, it follows that $D_{1}\subseteq D^{f}$, and, consequently, $(D_{1})_{R}\subseteq {D}_{R}^f$. Since $D_{1}$ is $f$-invariant, by \eqref{EqRealPartstrong}, it follows that $(D_1)_R$ is $f$-invariant, and by \eqref{Equat1strong}, nonempty. By statement 1, $\varphi$ is the analytic linearization on $D_{R}^f$ obtained as the limit of the B\"ottcher sequence, for sufficiently large $R>\exp ^{\circ k}(0)$ from statement 1, which satisfies $\varphi(\zeta)=\zeta+o(1)$, uniformly as $\Re(\zeta)\to+\infty$ on $D_R^f$. Now, put
		\begin{align*}
			E(\zeta) & :=\varphi(\zeta)-\psi(\zeta) ,
		\end{align*}
		for every $\zeta\in (D_{1})_{R}$. Note that $E$ is analytic on $(D_{1})_{R}$, such that $E(\zeta)=o(1)$, uniformly on $(D_{1})_{R}$, as $\Re(\zeta)\to +\infty $, and it satisfies
		\begin{align*}
			\frac{1}{\alpha }\cdot (E\circ f)(\zeta) &=E(\zeta) ,
		\end{align*}
		for every $\zeta\in (D_{1})_{R}$. Inductively, we get:
		\begin{equation}\label{eq:contrstrong}
			\frac{1}{\alpha ^{n}} \cdot E(f^{\circ n}(\zeta))=E(\zeta) ,
		\end{equation}
		for $\zeta\in (D_{1})_{R}$, $n\in\mathbb{N}$. By \eqref{Equat1strong}, $\Re \left(f^{\circ n}(\zeta)\right)\geq R+n\rho_{\alpha ,\varepsilon,k}(R)$, for $n\in \mathbb{N}$, and $\zeta\in (D_1)_R\subseteq D_R^f$. This implies that
		\begin{equation}\label{eq:dvastrong}
			\lim _{n\to \infty }\Re \left(f^{\circ n}(\zeta)\right)=+\infty ,
		\end{equation}
		for every $\zeta\in (D_1)_{R}$. Using \eqref{eq:dvastrong} and the fact that $E(\zeta)=o(1)$, as $\Re(\zeta)\to +\infty$, by \eqref{eq:contrstrong}, we get that $E(\zeta)=0$, for each $\zeta\in (D_1)_R$, which implies that $\varphi (\zeta ) = \psi (\zeta )$, for each $\zeta \in (D_1)_{R}$.
	\end{proof}

	\begin{rem}
		We call \eqref{BotSeqKoenigs} the B\"ottcher sequence, although it is in fact the Koenigs sequence in the logarithmic chart. In the $z$-chart it becomes the B\"ottcher sequence $\left( \left( f^{\circ n}(z)\right) ^{\frac{1}{\alpha ^{n}}}\right) _{n}$.
	\end{rem}

\medskip

\section{Analytic normalization of a strongly hyperbolic complex Dulac germs}\label{sec:proofThmB}

	This section is dedicated to the proof of Theorem~C in Subsection~\ref{SubsectionProofCompleteB}. The three main parts of the proof of Theorem~C are:
	\begin{enumerate}[1., font=\textup, topsep=0.4cm, itemsep=0.4cm, leftmargin=0.6cm]
		\item \emph{The formal part.} We prove in Proposition~\ref{PropNormalDulac} in Subsection~\ref{SubsectionFormalNorm} that the formal normalization (obtained in Theorem~A) of a strongly hyperbolic complex Dulac germ is a parabolic complex Dulac series.
		\item \emph{The analytic part.} Using the \emph{quasi-analyticity} property of complex Dulac germs and applying Theorem~B from Section~\ref{SectionNormalizationAnalytic}, we obtain the analytic normalization of a strongly hyperbolic complex Dulac germ.
		\item \emph{The asymptotics.} We relate formal and analytic normalizations of a strongly hyperbolic complex Dulac germ via asymptotic expansions on a standard quadratic domain. This is done using lemmas in Subsection~\ref{SubsectionPartialNorm} about solutions of suitable \emph{Schr\"oder's type} homological equations corresponding to partial normalizations.
	\end{enumerate}

	\medskip

	\subsection{Formal normalization of a strongly hyperbolic complex Dulac series}\label{SubsectionFormalNorm}
	
	By Remark~\ref{RemarkTheoremAGeneraliz}, Theorem~A also holds in the larger differential algebra $\mathfrak L(\mathbb{C})$. Therefore, for a strongly hyperbolic complex Dulac series $f$ there exists a unique parabolic normalization $\varphi \in \mathcal{L}_{1}^{0}(\mathbb{C})$. In Proposition~\ref{PropNormalDulac} below we prove that $\varphi $ is a complex Dulac series.
	
	\begin{prop}[Normalization of a strongly hyperbolic complex Dulac series]\label{PropNormalDulac}
		Let $\widehat{f}=z^{\alpha }+\mathrm{h.o.t.}$, $\alpha >1$, be a strongly hyperbolic complex Dulac series and let $\widehat{\varphi }$ be the parabolic normalization obtained as in Theorem~A. Then $\widehat{\varphi }$ is a parabolic complex Dulac series. Moreover, if $\widehat{f}$ is a real Dulac series, so is $\widehat{\varphi }$.
	\end{prop}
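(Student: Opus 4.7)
The plan is to realize $\widehat{\varphi}$ as the power-metric limit of the B\"ottcher iterates $g_n := z^{\frac{1}{\alpha^n}} \circ \widehat{f}^{\circ n}$ and to verify that each $g_n$ is already a parabolic complex Dulac series; block-by-block stabilization in the power-metric then transfers the Dulac structure to $\widehat{\varphi}$.

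I would first note that, since $\widehat{f}$ is a complex Dulac series, its first subleading exponent strictly exceeds $\alpha$, so $\beta := \mathrm{ord}_z(\widehat{f}-z^\alpha)-\alpha+1 > 1$. By Proposition~\ref{PropContractibility} the B\"ottcher operator $\mathcal{P}_{\widehat{f}}$ is a contraction on $(\mathrm{id}+\mathcal{L}_1^\beta(\mathbb{C}),d_z)$; no prenormalization is required. The Picard iterates $\mathcal{P}_{\widehat{f}}^{\circ n}(\mathrm{id})$, which a routine induction identifies with $g_n$, therefore converge to $\widehat{\varphi}$ in $d_z$ (this is consistent with statement~2 of Theorem~A since $\mathrm{Lb}_z(\mathrm{id}) = \mathrm{Lb}_z(\widehat{\varphi}) = z$).

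The next step is to check that each $g_n$ is a parabolic Dulac series. Because $\mathcal{D}(\mathbb{C})$ is closed under composition (Subsection~\ref{SubsectionGroupsL}), $\widehat{f}^{\circ n} \in \mathcal{D}(\mathbb{C})$ with leading term $z^{\alpha^n}$. Writing $\widehat{f}^{\circ n} = z^{\alpha^n}(1+H_n)$ with $H_n$ a Dulac tail of positive $z$-order, one obtains
\begin{align*}
g_n = z \cdot \sum_{k \geq 0} \binom{1/\alpha^n}{k} H_n^k,
\end{align*}
the series converging in the weak topology by Neumann summability. Each $H_n^k$ is Dulac (products of Dulac series are readily seen to be Dulac), and for each fixed $z$-exponent only finitely many values of $k$ contribute, each as a polynomial in $-\log z$; hence every $z$-block of $g_n$ is such a polynomial, and $g_n$ is a parabolic element of $\mathcal{D}(\mathbb{C})$.

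Finally, power-metric convergence $g_n \to \widehat{\varphi}$ means that for every $\gamma > 0$ there is $n_0$ such that $g_n$ and $\widehat{\varphi}$ share identical $z$-blocks at every exponent $\leq \gamma$ for all $n \geq n_0$. Consequently every block of $\widehat{\varphi}$ is a block of some Dulac $g_n$, hence a polynomial in $-\log z$, and $\mathrm{Supp}_z(\widehat{\varphi}) \cap [0,\gamma]$ is finite for each $\gamma$, giving $\widehat{\varphi} \in \mathcal{D}(\mathbb{C})$. For the real Dulac case, real coefficients propagate from $\widehat{f}$ through real composition and through the binomial expansion of $(1+H_n)^{1/\alpha^n}$ (with $H_n$ real and the leading coefficient real and positive); reality then passes to the limit. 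I expect the main delicate point to be verifying that each $z$-block of $g_n$ is genuinely a polynomial rather than an infinite series in $-\log z$ — this hinges on Neumann summability confining the infinite binomial expansion, at each fixed $z$-exponent, to finitely many polynomial summands.
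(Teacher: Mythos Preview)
Your proposal is correct and follows essentially the same route as the paper: both argue that the B\"ottcher iterates $g_n = z^{1/\alpha^n}\circ \widehat{f}^{\circ n}$ are Dulac, that they converge to $\widehat{\varphi}$ in the power-metric (using $\mathrm{ord}_z(\widehat{f}-z^\alpha)>\alpha$, so no prenormalization is needed), and then read off the Dulac structure of $\widehat{\varphi}$ block by block. The only difference is efficiency: where you expand $(1+H_n)^{1/\alpha^n}$ binomially and invoke Neumann summability to check that each $z$-block is a polynomial, the paper simply observes that $z^{1/\alpha^n}$ is itself a Dulac series and that $\mathcal{D}(\mathbb{C})$ is a group under composition, so $g_n\in\mathcal{D}(\mathbb{C})$ immediately---your ``delicate point'' disappears.
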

	\begin{proof}
		Let $\mathcal{P}_{f}$ be the B\"ottcher operator defined in \eqref{DefinitionPf}, in Section~\ref{sec:proofThmA}. Recall from Theorem~A that $\mathcal{P}_{\widehat{f}}^{\circ n}(\mathrm{id})=z^{\frac{1}{\alpha ^{n}}}\circ \widehat{f}^{\circ n}$, $n\in \mathbb{N} $. Since $\widehat{f}$ is a complex Dulac series and the set of all complex Dulac series is a subgroup of $\mathcal{L}_{1}^{H}(\mathbb{C})$, we deduce that $\mathcal{P}_{\widehat{f}}^{\circ n}(\mathrm{id}) $ is a complex Dulac series, for every $n\in \mathbb{N} $. Since $\widehat{f}$ is a complex Dulac series, note that $\mathrm{ord}_{z}(\widehat{f}-z^{\alpha })>\alpha $.
		
		By statement 2 of Theorem~A, it follows that $(\mathcal{P}_{\widehat{f}}(\mathrm{id}))_{n}$ converges to the parabolic normalization $\widehat{\varphi }$ in the power-metric topology on the space $\mathcal{L}_{1}(\mathbb{C})$.
		
		Let
		\begin{align*}
			\widehat{\varphi } &:= \mathrm{id}+\sum _{\beta \in \mathrm{Supp}_{z}\, (\widehat{\varphi })}z^{\beta }R_{\beta } ,
		\end{align*}
		and let $\gamma >1$. Then there exists $n\in \mathbb{N} $, such that $\mathrm{ord}_{z}(\mathcal{P}_{\widehat{f}}^{\circ n}(\mathrm{id})-\widehat{\varphi }) > \gamma $. Since $\mathcal{P}_{\widehat{f}}^{\circ n}(\mathrm{id})$ is a complex Dulac series, it follows that $R_{\beta }$ is a polynomial in the variable $\boldsymbol{\ell }_{1}^{-1}=-\log z$, for $1<\beta \leq \gamma $, and there are only finitely many $1<\beta \leq \gamma $, such that $R_{\beta } \neq 0$. Thus, the sequence of exponents of $z$ is strictly increasing, strictly bigger than or equal to $1$ and tending to $+\infty $, which implies that $\widehat{\varphi }$ is a parabolic complex Dulac series.
		
		Moreover, if $\widehat{f}$ is a real Dulac series, then $\widehat{\varphi }$ is also a real Dulac series.
	\end{proof}

	\begin{rem}
		$(1)$ Proposition~\ref{PropNormalDulac} can be viewed as an analogue of \cite[Lemma 4.2]{PRRSDulac21} for strongly hyperbolic complex Dulac series. Since the B\"ottcher sequence converges in the power-metric topology, the proof of Proposition~\ref{PropNormalDulac} is simpler than the proof of \cite[Lemma 4.2]{PRRSDulac21}. \\
		
		\noindent $(2)$ By Proposition~\ref{PropNormalDulac} and \cite[Lemma 4.2]{PRRSDulac21}, it follows that normalizations of strongly hyperbolic and hyperbolic complex Dulac germs are again complex Dulac germs. This is not true in general for normalizations of parabolic Dulac germs, which makes relations between analytic and formal normalizations much more complicated (see \cite{mrrz2} and \cite{mrrz3}). \\
		
		\noindent $(3)$ Let $\widehat{f}(\zeta )=\alpha \zeta + \sum _{i=1}^{+\infty }P_{i}(\zeta )\mathrm{e}^{- \alpha _{i}\zeta } $ be a strongly hyperbolic complex Dulac series given in the $\zeta $-chart and let $\widehat{\varphi }(\zeta )$ be its parabolic normalization obtained in Theorem~A, given in the $\zeta $-chart. By Proposition \ref{PropNormalDulac}, $\widehat{\varphi }(\zeta )$ is parabolic complex Dulac series, which implies that
		\begin{align*}
			\widehat{\varphi }(\zeta ) & =\zeta + \sum _{i=1}^{+\infty }Q_{i}(\zeta )\mathrm{e}^{- \beta _{i}\zeta } ,
		\end{align*}
		for the sequence of complex polynomials $(Q_{i})$ and the strictly increasing sequence $(\beta _{i})$ of positive real numbers tending to $+\infty $, as $i\to +\infty $. Note that $\widehat{\varphi }(\zeta ) \circ \widehat{f}(\zeta ) =\alpha \cdot \widehat{\varphi }(\zeta )$ in the $\zeta $-chart. \\
		
		\noindent $(4)$ Let $\widehat{f}(\zeta )$ be a formal sum $\widehat{f}(\zeta ):=\sum _{i=1}^{+\infty }R_{i}(\zeta )\mathrm{e}^{- \alpha _{i}\zeta }$, where $(R_{i})$ is a sequence of Laurent series (in the variable $\zeta ^{-1}$) with complex coefficients and $(\alpha _{i})$ strictly increasing sequence of positive real numbers such that $\left\lbrace \alpha _{i}:i\in \mathbb{N}\right\rbrace $ is a well-ordered set. Then we define \emph{the order of} $\widehat{f}(\zeta )$ \emph{in} $\mathrm{e}^{-1}$ as minimal $\alpha _{i}$ such that $R_{i}\neq 0$ (if $\widehat{f}(\zeta ) \neq 0$), and denote it by $\mathrm{ord}_{\mathrm{e}^{-1}} \, \widehat{f}(\zeta )$. If $\widehat{f}(\zeta )=0$, then we put $\mathrm{ord}_{\mathrm{e}^{-1}} \, \widehat{f}(\zeta ):=+\infty $. 
	\end{rem}

\medskip

\subsection{Partial normalizations and homological equations}\label{SubsectionPartialNorm}

	In this subsection we define partial normalizations of strongly hyperbolic complex Dulac germs that solve \emph{Schr\"oder's type homological equations}. This is used in the following subsection in the proof of Theorem~C.
	
	\begin{defn}[Formal and analytic partial normalizations]\label{DefinitionPartial}
		Let $f$ be a strongly hyperbolic complex Dulac germ and let $\widehat{\varphi }$ be its unique formal parabolic complex Dulac normalization obtained in Theorem~A. Let
		\begin{align}
			\widehat{\varphi }(\zeta ) & :=\zeta + \sum _{i=1}^{+\infty }Q_{i}(\zeta )\mathrm{e}^{- \beta _{i}\zeta } \label{NormPartial}
		\end{align}
		and let
		\begin{align*}
			\widehat{\varphi }_{0}(\zeta ) & := \zeta , \\
			\widehat{\varphi }_{n}(\zeta ) & :=\zeta + \sum _{i=1}^{n}Q_{i}(\zeta )\mathrm{e}^{- \beta _{i}\zeta } , \qquad n\in \mathbb{N}_{\geq 1} .
		\end{align*}
		We call $(\widehat{\varphi }_{n})$ the \emph{sequence of formal partial normalizations} of $f$.
		
		Note that every $\widehat{\varphi }_{n}$ trivially defines the corresponding analytic map on $\mathbb{C}^{+}$ that we denote by $\varphi _{n}$, $n\in \mathbb{N}$. We call $(\varphi _{n})$ the \emph{sequence of analytic partial normalizations} of $f$.
	\end{defn}

	\begin{lem}\label{LemmaDulacExpan}
		Let $f$ be a strongly hyperbolic complex Dulac germ defined on a standard quadratic domain $\mathcal{R}_{C}$, $C>0$, and let $\widehat{f}(\zeta )=\alpha \zeta +\mathrm{h.o.t.}$, $\alpha \in \mathbb{R}_{>1}$, be its complex Dulac asymptotic expansion in the $\zeta $-chart. Let $\widehat{\varphi }$ be the formal normalization of $\widehat{f}$ from Proposition~\ref{PropNormalDulac} in the $\zeta $-chart given in \eqref{NormPartial}. Let $(\varphi _{n})$ be the related sequence of analytic partial normalizations of $f$, as defined in Definition~\ref{DefinitionPartial}. Then, for every $n\in \mathbb{N}$, there exists $\varepsilon _{n}>0$ such that
		\begin{align}
			(\varphi _{n} \circ f-\alpha \varphi _{n})(\zeta )  & =o \big( \mathrm{e}^{- (\beta _{n}+ \varepsilon_{n} )\zeta } \big) , \quad (\beta _{0}:=0) , \label{BoundLemmaEq2}
		\end{align}
		uniformly on $\mathcal{R}_{C}$, as $\Re(\zeta ) \to + \infty $.
	\end{lem}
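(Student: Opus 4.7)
The plan is to derive the claim from the formal normalization identity $\widehat{\varphi}\circ \widehat{f}=\alpha \widehat{\varphi}$, which holds in $\mathcal{L}_{1}(\mathbb{C})$ by Remark~\ref{RemarkTheoremAGeneraliz} and Proposition~\ref{PropNormalDulac}, and then to transfer the formal estimate to $\mathcal{R}_{C}$ using $f\sim \widehat{f}$. Set $\widehat{R}_{n}:=\widehat{\varphi}-\widehat{\varphi}_{n}=\sum_{i>n}Q_{i}(\zeta)\mathrm{e}^{-\beta_{i}\zeta}$, so $\mathrm{ord}_{\mathrm{e}^{-1}}(\widehat{R}_{n})=\beta_{n+1}$. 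Substituting $\widehat{\varphi}=\widehat{\varphi}_{n}+\widehat{R}_{n}$ into the normalization equation and rearranging yields the key formal identity
\[
\widehat{\varphi}_{n}\circ \widehat{f}-\alpha \widehat{\varphi}_{n}=\alpha \widehat{R}_{n}-\widehat{R}_{n}\circ \widehat{f}.
\]

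Next, I would compute $\mathrm{ord}_{\mathrm{e}^{-1}}$ of the right-hand side. Since $\widehat{f}(\zeta)=\alpha \zeta +\mathrm{h.o.t.}$, every exponential $\mathrm{e}^{-\beta_{i}\widehat{f}(\zeta)}$ has leading factor $\mathrm{e}^{-\alpha \beta_{i}\zeta}$, giving $\mathrm{ord}_{\mathrm{e}^{-1}}(\widehat{R}_{n}\circ \widehat{f})=\alpha \beta_{n+1}$. Together with $\mathrm{ord}_{\mathrm{e}^{-1}}(\alpha \widehat{R}_{n})=\beta_{n+1}$ and $\alpha >1$, this yields $\mathrm{ord}_{\mathrm{e}^{-1}}(\widehat{\varphi}_{n}\circ \widehat{f}-\alpha \widehat{\varphi}_{n})=\beta_{n+1}$, with leading Dulac block $\alpha Q_{n+1}(\zeta)\mathrm{e}^{-\beta_{n+1}\zeta}$.

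For the analytic transfer, $\varphi_{n}$ is an exponential polynomial identically equal to its formal counterpart $\widehat{\varphi}_{n}$, and $f\sim \widehat{f}$ uniformly on $\mathcal{R}_{C}$. A term-by-term Taylor expansion of each $Q_{i}(f(\zeta))\mathrm{e}^{-\beta_{i}f(\zeta)}$ in the quantity $f(\zeta)-\alpha \zeta = o(1)$, combined with the truncated Dulac expansion of $f$, shows that $\varphi_{n}\circ f$ admits $\widehat{\varphi}_{n}\circ \widehat{f}$ as asymptotic expansion on $\mathcal{R}_{C}$. Hence $\varphi_{n}\circ f-\alpha \varphi_{n}$ admits $\alpha \widehat{R}_{n}-\widehat{R}_{n}\circ \widehat{f}$ as asymptotic expansion, so its modulus on $\mathcal{R}_{C}$ is $O(|\zeta|^{\deg Q_{n+1}}\mathrm{e}^{-\beta_{n+1}\Re (\zeta)})$. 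Since $|\zeta|=O((\Re (\zeta))^{2})$ on $\mathcal{R}_{C}$, we have $|\zeta|^{k}\mathrm{e}^{-c\Re (\zeta)}\to 0$ for any $k\in \mathbb{N}$, $c>0$; choosing $\varepsilon_{n}\in (0,\beta_{n+1}-\beta_{n})$ then gives $(\varphi_{n}\circ f-\alpha \varphi_{n})(\zeta)=o(\mathrm{e}^{-(\beta_{n}+\varepsilon_{n})\zeta})$ uniformly on $\mathcal{R}_{C}$.

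The main obstacle is the analytic-to-formal matching in the last step, i.e., verifying that the asymptotic expansion of $\varphi_{n}\circ f$ on $\mathcal{R}_{C}$ coincides with the formal composition $\widehat{\varphi}_{n}\circ \widehat{f}$. Since $\varphi_{n}$ contains only finitely many exponentials, this reduces to uniform estimates for $\mathrm{e}^{-\beta_{i}(f(\zeta)-\alpha \zeta)}-1$ and $Q_{i}(f(\zeta))-Q_{i}(\alpha \zeta)$ on $\mathcal{R}_{C}$; it is technical rather than conceptual and closely parallels the composition arguments used in \cite{PRRSDulac21}.
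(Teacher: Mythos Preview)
Your proposal is correct and follows essentially the same strategy as the paper: use the formal identity $\widehat{\varphi}\circ\widehat{f}=\alpha\widehat{\varphi}$ to see that $\widehat{\varphi}_{n}\circ\widehat{f}-\alpha\widehat{\varphi}_{n}$ has $\mathrm{ord}_{\mathrm{e}^{-1}}$ strictly larger than $\beta_{n}$, then transfer this to the analytic side via $f\sim\widehat{f}$ and a Taylor expansion of the finitely many exponential terms in $\varphi_{n}$. The only organizational difference is that the paper also truncates $\widehat{f}$ to $\widehat{f}_{n}$ and compares $\varphi_{n}\circ f$ with $\varphi_{n}\circ f_{n}$ in two separate steps, whereas you work directly with the identity $\widehat{\varphi}_{n}\circ\widehat{f}-\alpha\widehat{\varphi}_{n}=\alpha\widehat{R}_{n}-\widehat{R}_{n}\circ\widehat{f}$ and read off the order $\beta_{n+1}$; both routes yield the same $\varepsilon_{n}\in(0,\beta_{n+1}-\beta_{n})$.
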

	
	\begin{proof}
		Let $\widehat{f}:=\alpha \zeta + \sum _{n\geq 1}R_{n}(\zeta )\mathrm{e}^{-\alpha _{n}\zeta }$, where $\alpha >1$, $(R_{n})$ is a sequence of polynomials in the variable $\zeta $ and $(\alpha_{n})$ a strictly increasing sequence of positive real numbers tending to $+\infty $. Put:
		\begin{align*}
			\widehat{f}_{0} & :=\alpha \zeta , \\
			\widehat{f}_{n} & :=\alpha \zeta + \sum _{i\in \mathbb{N}_{\geq 1}, \, \alpha _{i}\leq \beta  _{n}}R_{i}(\zeta )\mathrm{e}^{-\alpha _{i}\zeta } , \, \qquad n\in \mathbb{N}_{\geq 1} .
		\end{align*}
		Since $\widehat{f}_{n}$, $n\in \mathbb{N}$, are finite sums of terms, we denote the related sequence of maps as $(f_{n})$. Put $\widehat{\varphi }_{>n}:=\widehat{\varphi }-\widehat{\varphi }_{n}$ and $\widehat{f}_{>n}:=\widehat{f}-\widehat{f}_{n}$, $n\in \mathbb{N} $. Since $\widehat{\varphi }\circ \widehat{f}=\alpha \cdot \widehat{\varphi }$, it follows that:
		\begin{align*}
			(\widehat{\varphi }_{n}+\widehat{\varphi }_{>n}) \circ (\widehat{f}_{n}+\widehat{f}_{>n}) &= \alpha \cdot (\widehat{\varphi }_{n}+\widehat{\varphi }_{>n}) , \\
			\widehat{\varphi }_{n} \circ (\widehat{f}_{n}+\widehat{f}_{>n})+ \widehat{\varphi }_{>n}\circ (\widehat{f}_{n}+\widehat{f}_{>n}) &= \alpha \cdot \widehat{\varphi }_{n}+\alpha \cdot \widehat{\varphi }_{>n} , \quad n\in \mathbb{N} .
		\end{align*}
		Using the formal Taylor Theorem, this implies that, for every $n\in \mathbb{N}$, there exists $\mu _{n}>0$ such that:
		\begin{align*}
			\mathrm{ord}_{\mathrm{e}^{-\zeta }}(\widehat{\varphi }_{n} \circ \widehat{f}_{n}-\alpha \widehat{\varphi }_{n}) & = \beta  _{n}+\mu _{n} .
		\end{align*}
		Therefore, for every $n\in \mathbb{N}$, it follows that:
		\begin{align}
			\varphi _{n} (f_{n}(\zeta ))-\alpha \varphi _{n}(\zeta ) & = o\big( \mathrm{e}^{-(\beta  _{n}+\mu _{n}) \zeta } \big) , \label{BoundForCompEq}
		\end{align}
		uniformly on $\mathcal{R}_{C}$, as $\Re (\zeta ) \to + \infty $. Put
		\begin{align}
			f_{>n}(\zeta ) &:=(f-f_{n})(\zeta ), \quad \zeta \in \mathcal{R}_{C}, \: n\in \mathbb{N} . \nonumber 
		\end{align}
		It is easy to see that $f_{>n} \sim \widehat{f}_{>n}$, and, therefore, for every $n\in \mathbb{N}$, there exists $\eta _{n}>0$ such that:
		\begin{align}
			f_{>n}(\zeta ) &=o\big( \mathrm{e}^{-(\beta  _{n}+\eta _{n})\zeta } \big) , \nonumber 
		\end{align}
		uniformly on $\mathcal{R}_{C}$, as $\Re  (\zeta )\to + \infty $. Since $\mathcal{R}_{C}$ is admissible, by Proposition \ref{prop:invariance of D strong} and Example \ref{ExampleStandardQuadraticDom1}, for each $n\in \mathbb{N}$, there exists $R>0$ such that $(\mathcal{R}_{C})_{R}$ is $f_{n}$-invariant and $f$-invariant. Therefore, $f(\zeta ),f_{n}(\zeta )\in (\mathcal{R}_{C})_{R}\subseteq \mathbb{C}^{+}$, for each $\zeta \in (\mathcal{R}_{C})_{R}$. Now, since $\varphi _{n}$ is analytic on $\mathbb{C}^{+}$, by the Taylor Theorem, we get:
		\begin{align*}
			\varphi _{n}(f(\zeta )) &= \varphi _{n}((f_{n}+f_{>n})(\zeta )) \\
			&= \varphi _{n}(f_{n}(\zeta )) +\sum _{i\geq 1}\frac{\varphi _{n}^{(i)}(f_{n}(\zeta ))}{i!}(f_{>n}(\zeta ))^{i} ,
		\end{align*}
		for each $\zeta \in (\mathcal{R}_{C})_{R}$ and $n\in \mathbb{N}$. This implies that
		\begin{align*}
			\varphi _{n}(f(\zeta )) &= \varphi _{n}(f_{n}(\zeta )) + o(\mathrm{e}^{-(\beta  _{n}+\eta _{n})\zeta }) ,
		\end{align*}
		uniformly on $\mathcal{R}_{C}$, as $\Re  (\zeta )\to + \infty $, for each $n\in \mathbb{N}$. This, together with \eqref{BoundForCompEq}, implies that, for every $n\in \mathbb{N}$, and $\varepsilon _{n}:=\min \left\lbrace \mu _{n}, \eta _{n}\right\rbrace $, \eqref{BoundLemmaEq2} holds.
	\end{proof}
	
	\begin{lem}[A solution of a Schr\"oder's type homological equation]\label{LemmaHomologicalEq}
		Let $f$ be a strongly hyperbolic complex Dulac germ defined on a standard quadratic domain $\mathcal{R}_{C}$, $C>0$, and let $\widehat{f}(\zeta )=\alpha \zeta +\mathrm{h.o.t.}$, $\alpha \in \mathbb{R}_{>1}$, be its asymptotic expansion in the $\zeta $-chart. Let $g(\zeta )=o(\mathrm{e}^{-\nu \zeta })$,  uniformly on $\mathcal{R}_{C}$, as $\Re  (\zeta ) \to +\infty $, $\nu \in \mathbb{R}_{>0}$, be an analytic germ defined on $\mathcal{R}_{C}$, in the $\zeta $-chart. Then:
		\begin{enumerate}[1., font=\textup, topsep=0.4cm, itemsep=0.4cm, leftmargin=0.6cm]
			\item $($\emph{Existence}$)$ There exist $R>0$ such that $D:=(\mathcal{R}_{C})_{R}$ is an $f$-invariant subdomain $D\subseteq\mathcal{R}_{C}$, and an analytic solution
			\begin{align}
				\varphi _{g}(\zeta ):= - \sum _{n=0}^{+ \infty }\frac{1}{\alpha ^{n+1}}(g\circ f^{\circ n})(\zeta ) \label{SolutionHomolEq}
			\end{align}
			of the \emph{Schr\"oder's type homological equation}:
			\begin{align}
				\varphi _{g} (f(\zeta )) - \alpha f(\zeta ) & = g(\zeta ) , \quad \zeta \in D. \label{EqHomological}
			\end{align}
			\item $($\emph{Asymptotics}$)$ $\varphi _{g}(\zeta) = O(\mathrm{e}^{-\nu \zeta })$, uniformly on $D$, as $\Re  (\zeta ) \to +\infty $.
			\item $($\emph{Uniqueness}$)$ If $\psi _{g} = o(1)$ is a solution of homological equation \eqref{EqHomological} on $f$-invariant subdomain $D_{1}\subseteq \mathcal{R}_{C}$ then $\psi _{g} (\zeta ) = \varphi _{g}(\zeta )$, for each $\zeta \in (D_{1})_{R}=D\cap D_1$.
		\end{enumerate}
	\end{lem}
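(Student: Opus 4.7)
The plan is to prove the three statements in order, leveraging the strong hyperbolicity of $f$ to obtain a geometric lower bound on $\Re(f^{\circ n}(\zeta))$, which forces the series defining $\varphi_g$ to converge very rapidly and to decay at the right rate. I first read the homological equation as $\varphi_g\circ f-\alpha\,\varphi_g=g$ (the stated $\alpha f$ appears to be a typo, as the telescoping computation below confirms).

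For the $f$-invariance, I would invoke Example~\ref{ExampleStandardQuadraticDom1} (which applies Proposition~\ref{prop:invariance of D strong}) to produce $R_0>0$ such that $(\mathcal{R}_C)_{R_0}$ is $f$-invariant. Then, since $f(\zeta)=\alpha\zeta+o(1)$ uniformly on $\mathcal{R}_C$, for any fixed $\alpha_1\in(1,\alpha)$ I can enlarge $R$ so that $|f(\zeta)-\alpha\zeta|\leq(\alpha-\alpha_1)R$ on $D:=(\mathcal{R}_C)_R$, which gives $\Re(f(\zeta))\geq\alpha_1\Re(\zeta)$ for $\zeta\in D$; iterating yields the crucial estimate
\[
\Re(f^{\circ n}(\zeta))\geq\alpha_1^{\,n}\,\Re(\zeta),\qquad n\in\mathbb{N},\ \zeta\in D.
\]

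For statements 1 and 2 I treat convergence and the asymptotic bound together. Since $g(\zeta)=o(\mathrm{e}^{-\nu\zeta})$ on $\mathcal{R}_C$, enlarging $R$ once more gives $|g(\zeta)|\leq M\mathrm{e}^{-\nu\Re(\zeta)}$ on $D$. Combining with the iterate growth,
\[
\left|\frac{1}{\alpha^{n+1}}g(f^{\circ n}(\zeta))\right|\leq\frac{M}{\alpha^{n+1}}\,\mathrm{e}^{-\nu\alpha_1^{\,n}\Re(\zeta)},
\]
so the series converges uniformly on compact subsets of $D$; by Weierstrass's theorem $\varphi_g$ is analytic on $D$. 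The $n=0$ summand contributes $-\tfrac{1}{\alpha}g(\zeta)=O(\mathrm{e}^{-\nu\Re(\zeta)})$, while the tail is bounded by
\[
\mathrm{e}^{-\nu\Re(\zeta)}\cdot\mathrm{e}^{-\nu(\alpha_1-1)R}\sum_{n\geq 1}\alpha^{-n-1}=O(\mathrm{e}^{-\nu\Re(\zeta)}),
\]
which gives statement 2. The homological equation follows by a telescoping shift of index:
\[
\varphi_g(f(\zeta))=-\sum_{n=0}^{\infty}\alpha^{-(n+1)}g(f^{\circ(n+1)}(\zeta))=-\sum_{n=1}^{\infty}\alpha^{-n}g(f^{\circ n}(\zeta))=\alpha\varphi_g(\zeta)+g(\zeta).
\]

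Statement 3 is a standard uniqueness argument: if $\psi_g$ is another $o(1)$ solution on an $f$-invariant $D_1\subseteq\mathcal{R}_C$, then $E:=\varphi_g-\psi_g$ satisfies $E\circ f=\alpha\,E$ on $(D_1)_R=D\cap D_1$, whence $E(\zeta)=\alpha^{-n}E(f^{\circ n}(\zeta))$; the iterate growth forces $\Re(f^{\circ n}(\zeta))\to+\infty$, and $E=o(1)$, so $E\equiv 0$. The main obstacle I anticipate is the refined geometric growth $\Re(f^{\circ n}(\zeta))\geq\alpha_1^{\,n}\Re(\zeta)$, since the Theorem~B-type estimate used in Proposition~\ref{prop:invariance of D strong} only yields the linear bound $\Re(f^{\circ n}(\zeta))\geq R+n\rho_{\alpha,\varepsilon,k}(R)$, which is insufficient to preserve the exponential decay rate $\mathrm{e}^{-\nu\Re(\zeta)}$ in the asymptotics; exploiting strong hyperbolicity ($\alpha>1$ rather than $\alpha=1$) directly in the recursion $\Re(f(\zeta))\geq\alpha\Re(\zeta)-\epsilon$ is what upgrades the estimate.
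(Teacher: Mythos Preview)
Your proposal is correct and follows essentially the same architecture as the paper: invoke Example~\ref{ExampleStandardQuadraticDom1} for $f$-invariance of $(\mathcal{R}_C)_R$, build $\varphi_g$ as the explicit series, get analyticity via Weierstrass, verify the equation by telescoping, and prove uniqueness via $E\circ f=\alpha E$ with $E=o(1)$. You also correctly spot that \eqref{EqHomological} should read $\varphi_g\circ f-\alpha\varphi_g=g$.

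The one substantive difference is in statement~2. The paper simply refers to the analogous estimate in \cite[Lemma~4.4]{PRRSDulac21}, whereas you supply a self-contained argument based on the geometric iterate bound $\Re(f^{\circ n}(\zeta))\geq \alpha_1^{\,n}\Re(\zeta)$ for a fixed $\alpha_1\in(1,\alpha)$. Your observation that the linear bound $\Re(f^{\circ n}(\zeta))\geq R+n\rho_{\alpha,\varepsilon,k}(R)$ from Proposition~\ref{prop:invariance of D strong} would \emph{not} suffice to recover $O(\mathrm{e}^{-\nu\zeta})$ is exactly right, and exploiting $\alpha>1$ directly in $\Re(f(\zeta))\geq \alpha\Re(\zeta)-o(1)$ is the correct upgrade. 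For statement~1 alone, the paper gets by with the cruder bound $\Re(f^{\circ n}(\zeta))\geq R$ together with the geometric factor $\alpha^{-(n+1)}$, which already gives uniform convergence on all of $D$ (not just compacta, as you wrote---your estimate actually yields this too). But your sharper bound is what makes statement~2 go through explicitly, so your route is a bit more self-contained.
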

	
	The proof of Lemma~\ref{LemmaHomologicalEq} is motivated by \cite[Subsections A.5, A.6]{Loray98}, \cite[Subsection 2.4]{Loray21}, where an explicit solution of an \emph{Abel's type of homological equation} is constructed.
	
	\begin{proof}
		Let $\varepsilon>0$ and $k\in\mathbb N$ be arbitrary. By Example~\ref{ExampleStandardQuadraticDom1}, we take $R>0$ sufficiently large such that $(\mathcal{R}_{C})_{R}=(\mathcal{R}_{C})_{R}^f$, that is, such that $(\mathcal{R}_{C})_R$ is $f$-invariant. Now, put $D:=(\mathcal{R}_{C})_{R}$. Since $g(\zeta )=o(\mathrm{e}^{-\nu \zeta })$, $\nu >0$, uniformly on $\mathcal{R}_{C}$, as $\Re  (\zeta )\to + \infty $, it follows that there exists $R>0$ large enough, such that $\left| g(\zeta )\right| \leq \frac{1}{\mathrm{e}^{\nu \cdot \Re  (\zeta) }}$, $\zeta \in D$. Therefore,
		\begin{align*}
			\left| \sum _{i=0}^{n}\frac{1}{\alpha ^{i+1}}(g\circ f^{\circ i})(\zeta ) - \sum _{i=0}^{n-1}\frac{1}{\alpha ^{i+1}}(g\circ f^{\circ i})(\zeta )\right| & \leq \frac{1}{\alpha ^{n+1}}\cdot \frac{1}{\mathrm{e}^{\nu \cdot \Re  (f^{\circ n}(\zeta )) }} \\
			& \leq \frac{1}{\alpha ^{n+1}}\cdot \frac{1}{\mathrm{e}^{\nu R}}, \quad \zeta \in D, \: n\in \mathbb{N}_{\geq 1} . 
		\end{align*}
		Since $\sum \frac{1}{\alpha ^{n}}$ converges, it implies that $\sum \frac{1}{\alpha ^{n+1}}(g\circ f^{\circ n})$ converges uniformly on $D$. Put
		\begin{align*}
			\varphi _{g}(\zeta ) &:= - \sum _{n=0}^{+\infty }\frac{1}{\alpha ^{n+1}}(g\circ f^{\circ n})(\zeta ), \quad \zeta \in D.
		\end{align*}
		By the Weierstrass' Theorem, $\varphi _{g}$ is analytic on $D$. Now, it is easy to see that $\varphi _{g}$ is a solution of homological equation \eqref{EqHomological}. \\
		
		The statement 2 follows similarly as the proof of statement 2 of \cite[Lemma 4.4]{PRRSDulac21}. \\
		
		3. Suppose that $\psi _{g}(\zeta )=o(1)$, uniformly, as $\Re  (\zeta )\to +\infty $, is a solution of homological equation \eqref{EqHomological} on $D_{1}$. Put $E(\zeta ):=\varphi _{g}(\zeta)-\psi _{g}(\zeta )$, $\zeta \in D_{1}\cap D$. Now, $E\circ f=\alpha E$ and $E(\zeta )=o(1)$, uniformly on $D_{1}\cap D$ as $\Re  (\zeta )\to + \infty $. By the proof of statement 3 of Theorem~B, it follows that $E(\zeta ) = 0$, i.e., $\varphi _{g}(\zeta ) = \psi _{g}(\zeta )$, $\zeta \in D_{1}\cap D=(D_{1})_{R}$.
	\end{proof}

\medskip

\subsection{Proof of Theorem~C}\label{SubsectionProofCompleteB}
	
	\begin{proof}[Proof of Theorem~C]
		Let $f(\zeta )=\alpha \zeta + o(1)$, $\alpha \in \mathbb{R}_{>1}$, be a strongly hyperbolic complex Dulac germ on a standard quadratic domain $\mathcal{R}_{C}$, given in the $\zeta $-chart, and let $\widehat{f}(\zeta )$ be its complex Dulac asymptotic expansion in the same chart. We distinguish two cases.
		
		If $\widehat{f}(\zeta )=\alpha \zeta $, by \emph{quasi-analyticity} (see the end of Subsection~\ref{SubsubsectionDulac}), it follows that $f(\zeta )=\alpha \zeta $. In this case $f$ is already normalized, so we put $\varphi :=\mathrm{id}$. Since the formal normalization $\widehat{\varphi }$ obviously equals $\mathrm{id}$, it follows $\varphi \sim \widehat{\varphi }$.
		
		Now suppose that $\widehat{f}$ is nontrivial. This implies that $f$ satisfies the assumptions of Theorem~B, since the standard quadratic domain $\mathcal{R}_{C}$ is an admissible domain of any type (Example~\ref{ExampleQuadraticDomainStrong}). Therefore, by Theorem~B, for $f$ we obtain an analytic normalization $\varphi $ on the maximal $f$-invariant subdomain $(\mathcal{R}_{C})_{R}^f$ of a standard quadratic domain $\mathcal{R}_{C}$, $C>0$. By Example~\ref{ExampleStandardQuadraticDom1}, it follows that we can take $R>0$ sufficiently large such that $(\mathcal{R}_{C})_{R}=(\mathcal{R}_{C})_{R}^f$, that is, such that $(\mathcal{R}_{C})_R$ is $f$-invariant.
		
		On the other hand, by Proposition~\ref{PropNormalDulac}, in Subsection~\ref{SubsectionFormalNorm}, the normalization of the strongly hyperbolic complex Dulac series $\widehat{f}$ is a parabolic complex Dulac series $ \widehat{\varphi}$.
		
		By statement 2 of Remark~\ref{RemarkStandardQ}, there exists $C_{1}>0$ large enough such that $\mathcal{R}_{C_{1}}\subseteq (\mathcal{R}_{C})_{R}$. Therefore, it is left to prove that $\widehat{\varphi }$ is the asymptotic expansion of $\varphi $ on the standard quadratic domain $\mathcal{R}_{C_{1}}$.
		
		Let
		\begin{align*}
			\widehat{\varphi }(\zeta ) & :=\zeta + \sum _{i\geq 1}Q_{i}(\zeta )\mathrm{e}^{-\beta _{i}\zeta } 
		\end{align*}
		and let $(\varphi _{n})$ be the sequence of partial analytic normalizations defined in Definition~\ref{DefinitionPartial}. Furthermore, let
		\begin{align}
			g_{n}(\zeta ) & :=-(\varphi _{n}(f(\zeta ))-\alpha \varphi _{n}(\zeta )) , \quad \zeta \in \mathcal{R}_{C_{1}}, \, n\in \mathbb{N} . \label{EqFirstLemma}
		\end{align}
		Since $\varphi $ is a normalization of $f$, we have:
		\begin{align}
			\varphi (f(\zeta )) - \alpha \varphi (\zeta ) & =0, \quad \zeta \in \mathcal{R}_{C_{1}}. \label{EqSecondLemma}
		\end{align}
		Note that, by \eqref{EqFirstLemma} and \eqref{EqSecondLemma},
		\begin{align}
			(\varphi  - \varphi _{n})(f(\zeta ))-\alpha (\varphi  - \varphi _{n})(\zeta ) &= g_{n}(\zeta ) , \quad \zeta \in \mathcal{R}_{C_{1}}, \, n\in \mathbb{N} . \label{EqHomological1}
		\end{align}
		By Lemma~\ref{LemmaDulacExpan}, it follows that $g_{n}(\zeta)=o(\mathrm{e}^{-(\beta _{n}+\varepsilon _{n})\zeta })$, for some $\varepsilon _{n}>0$, uniformly on $\mathcal{R}_{C_{1}}$, as $\Re  (\zeta )\to + \infty $, for every $n\in \mathbb{N}$. From statement 3 of Theorem~B, it follows that $(\varphi - \varphi _{n})(\zeta )=o(1)$, for each $n\in \mathbb{N} $. Since $\varphi - \varphi _{n}$ is a solution of homological equation \eqref{EqHomological1}, by statements 2 and 3 of Lemma~\ref{LemmaHomologicalEq}, it follows that $(\varphi - \varphi _{n})(\zeta )=O(\mathrm{e}^{- (\beta _{n}+\varepsilon _{n})\zeta })$, uniformly on $\mathcal{R}_{C_{1}}$, as $\Re  (\zeta ) \to + \infty $, $n\in \mathbb{N} $. Therefore, $(\varphi - \varphi _{n})(\zeta )=o(\mathrm{e}^{- \beta _{n}\zeta })$, uniformly on $\mathcal{R}_{C_{1}}$, as $\Re  (\zeta ) \to + \infty $, for every $n\in \mathbb{N}$. This proves that $\widehat{\varphi }$ is the asymptotic expansion of $\varphi $ on the standard quadratic domain $\mathcal{R}_{C_{1}}$. Therefore, $\varphi $ is a parabolic complex Dulac germ. Furthermore, since $\varphi \sim \widehat{\varphi }$ and $\widehat{\varphi }$ is a complex Dulac series, by statement 4 of Theorem~B, it follows that $\varphi $ is the unique analytic solution of the normalization equation \eqref{EqAnalyticNorm}, which is a parabolic complex Dulac germ.
		
		Furthermore, by statement 2 of Theorem~B, it follows that $\varphi $ is a real Dulac germ if $f$ is a real Dulac germ.
	\end{proof}

	\medskip
	
	\emph{Addresses:}\
	
	$^{1}$: University of Split, Faculty of Science, Ru\dj era Bo\v skovi\' ca 33, 21000 Split, Croatia, email: dino.peran@pmfst.hr


\begin{thebibliography}{10}
	
		\bibitem[ABS19]{abs19}
		I.~Aniceto, G.~Ba\c{s}ar, and R.~Schiappa, \emph{A primer on resurgent
			transseries and their asymptotics}, Phys. Rep. \textbf{809} (2019), 1--135.
		
		\bibitem[ADH13]{adh13}
		M.~Aschenbrenner, {L. van den} Dries, and {J. van der} Hoeven, \emph{Towards a
			model theory for transseries}, Notre Dame J. Form. Log. \textbf{54} (2013),
		no.~3-4, 279--310.
		
		\bibitem[CG93]{CarlesonG93}
		L.~Carleson and T.~W. Gamelin, \emph{Complex dynamics}, Universitext: Tracts in
		Mathematics, Springer-Verlag, New York, 1993.
		
		\bibitem[DMM01]{Dries}
		{L. van den} Dries, A.~Macintyre, and D.~Marker, \emph{Logarithmic-exponential
			series}, Proceedings of the {I}nternational {C}onference ``{A}nalyse \&
		{L}ogique'' ({M}ons, 1997), vol. 111, 2001, pp.~61--113.
		
		\bibitem[Dulac23]{dulac}
		H. Dulac, \emph{Sur les cycles limites}, Bull. Soc. Math. France \textbf{51} (1923), 45--188.
		
		\bibitem[{\'E}ca92]{ecalle92}
		J.~{\'E}calle, \emph{Introduction aux fonctions analysables et preuve
			constructive de la conjecture de {D}ulac}, Actualit{\'e}s Math{\'e}matiques,
		Hermann, Paris, 1992.
		
		\bibitem[Il'84]{Ily84}
		Y.~Il'yashenko, \emph{Limit cycles of polynomial vector fields with
			nondegenerate singular points on the real plane}, Functional Anal. Appl.
		\textbf{18} (1984), no.~3, 199--209.
		
		\bibitem[Il'91]{Ily91}
		\bysame, \emph{Finiteness theorems for limit cycles}, Translations of
		Mathematical Monographs, vol.~94, American Mathematical Society, Providence,
		RI, 1991.
		
		\bibitem[IY08]{IlyYak08}
		Y.~Ilyashenko and S.~Yakovenko, \emph{Lectures on analytic differential
			equations}, Graduate Studies in Mathematics, vol.~86, American Mathematical
		Society, Providence, RI, 2008.
		
		\bibitem[Koe84]{Koenigs84}
		G.~Koenigs, \emph{Recherches sur les int\'{e}grales de certaines \'{e}quations
			fonctionnelles}, Ann. Sci. \'{E}cole Norm. Sup. (3) \textbf{1} (1884), 3--41.
		
		\bibitem[Lor98]{Loray98}
		F.~Loray, \emph{Analyse des s{\'e}ries divergentes}, Math{\'e}matiques pour le
		2e cycle, vol. Quelques aspects des math{\'e}matiques actuelles, Ellipses,
		1998.
		
		\bibitem[Lor21]{Loray21}
		\bysame, \emph{Pseudo-groupe d'une singularit{\'e} de feuilletage holomorphe en dimension deux.}, hal-00016434v2, 2021, \url{https://hal.archives-ouvertes.fr/hal-00016434v2}
		
		\bibitem[Mil06]{Milnor06}
		J.~Milnor, \emph{Dynamics in one complex variable}, third ed., Annals of
		Mathematics Studies, vol. 160, Princeton University Press, Princeton, NJ,
		2006.
		
		\bibitem[MR21]{mrrz3}
		P.~Marde{\v s}i{\' c} and M.~Resman, \emph{Analytic moduli for parabolic Dulac germs}, Russian Mathematical Surveys \textbf{76} (2021), no.~3, 13--92.
		
		\bibitem[MRR{\v{Z}}16]{mrrz1}
		P.~Marde{\v{s}}i{\'c}, M.~Resman, J.-P. Rolin, and V.~{\v{Z}}upanovi{\'c},
		\emph{Normal forms and embeddings for power-log transseries}, Adv. Math.
		\textbf{303} (2016), 888--953.
		
		\bibitem[MRR{\v{Z}}19]{mrrz2}
		P.~Marde\v{s}i\'{c}, M.~Resman, J.-P. Rolin, and V.~{\v{Z}}upanovi\'{c},
		\emph{The {F}atou coordinate for parabolic {D}ulac germs}, J. Differential
		Equations \textbf{266} (2019), no.~6, 3479--3513.
		
		\bibitem[MS2016]{MiSa} C.~Mitschi, D.~Sauzin, \emph{Divergent Series, Summability and Resurgence I (Monodromy and Resurgence)}, Lecture Notes in Mathematics, Springer International Publishing Switzerland, 2016 \url{https://doi.org/10.1007/978-3-319-28736-2} 
		
		\bibitem[Neu49]{Neumann49}B.~H.~Neumann, \emph{On ordered division rings}, Trans. Amer. Math. Soc.,
		\textbf{66} (1949), 202--252.
		
		\bibitem[P21Thesis]{Peran21Thesis}
		D.~Peran, \emph{Normal forms for transseries and Dulac germs}, doctoral thesis, University of Zagreb, 2021, \url{https://urn.nsk.hr/urn:nbn:hr:217:394321}
		
		\bibitem[P21P]{Peran21Parabolic}
		D.~Peran, \emph{Normal forms of parabolic logarithmic transseries}, submitted 2022, \url{https://arxiv.org/pdf/2112.12187.pdf}
		
		\bibitem[PRRS21D]{PRRSDulac21}
		D.~Peran, J.-P. Rolin, M.~Resman, and T.~Servi, \emph{Linearization of complex hyperbolic Dulac germs}, Journal of Mathematical Analysis and Applications, 508(1), 1-27, 2022, \url{https://doi.org/10.1016/j.jmaa.2021.125833}
		
		\bibitem[PRRS21]{PRRSFormal21}
		D.~Peran, J.-P. Rolin, M.~Resman, and T.~Servi, \emph{Normal forms of hyperbolic logarithmic transseries}, Journal of Differential Equations \textbf{348} (2023) 154-190, \url{https://doi.org/10.1016/j.jde.2022.12.002}
		
		\bibitem[Rou98]{Roussarie98}
		R.~Roussarie, \emph{Bifurcation of planar vector fields and {H}ilbert's	sixteenth problem}, Progress in Mathematics, vol. 164, Birkh\"auser Verlag, Basel, 1998. 
		
		\bibitem[Sch70]{Schroder70}
		E.~Schr\"{o}der, \emph{Ueber iterirte {F}unctionen}, Math. Ann. \textbf{3}
		(1870), no.~2, 296--322.
		
		\bibitem[XG16]{XiangG16}
		T.~Xiang and S.~G. Georgiev, \emph{Noncompact-type {K}rasnoselskii fixed-point
			theorems and their applications}, Math. Methods Appl. Sci. \textbf{39}
		(2016), no.~4, 833--863.
		
	\end{thebibliography}
\end{document}